\documentclass{article}

\title{Primitive Positive Constructions Among Finite Permutation Groups}
\author{Sebastian Meyer\thanks{The author has received funding from the European Research Council (Project POCOCOP, ERC Synergy Grant
		101071674). Views and opinions expressed are however those of the author only and do not necessarily reflect those
		of the European Union or the European Research Council Executive Agency. Neither the European Union nor the
		granting authority can be held responsible for them.}}
\date{\today}

\usepackage{amsmath}
\usepackage{amsthm}
\usepackage{amssymb}
\usepackage{leftindex}
\usepackage[utf8]{inputenc}
\usepackage{enumitem}
\usepackage[UKenglish]{babel}
\usepackage{centernot}	% used for --/--> (centered \not)
\usepackage{tikz}
\usepackage[colorlinks=true,linkcolor=blue]{hyperref}

\theoremstyle{plain}
\newtheorem{theorem}{Theorem}[section]
\newtheorem{lemma}[theorem]{Lemma}

\newtheorem{corollary}[theorem]{Corollary}
\newtheorem{conjecture}[theorem]{Conjecture}

\theoremstyle{definition}
\newtheorem{definition}[theorem]{Definition}
\newtheorem{remark}[theorem]{Remark}
\newtheorem*{remark*}{Remark}
\newtheorem{observation}[theorem]{Observation}
\newtheorem{question}[theorem]{Problem}

\DeclareMathOperator{\T}{{\mathbb T}}

\DeclareMathOperator{\Hom}{Hom}
\DeclareMathOperator{\Pol}{Pol}

\theoremstyle{definition}
\newtheorem{examplex}[theorem]{Example}
\newenvironment{example}
	{\pushQED{\qed}\examplex}
	{\popQED\endexamplex}

\DeclareMathOperator{\id}{id}

\newcommand{\de}[1]{\emph{#1}}

\newcommand{\clone}{\mathcal C}
\newcommand{\cloneD}{\mathcal D}

\newcommand{\Sym}{\operatorname{Sym}}
\newcommand{\act}{\curvearrowright} %group action arrow
\newcommand{\prim}{\mathbb{P}}   %the union of the primitive G-sets
\newcommand{\iso}{\cong}    %isomorphism
\newcommand{\stab}{\operatorname{Stab}} %Stabilizer
\newcommand{\normalsubgroup}{\trianglelefteq}   % normal subgroup
\newcommand{\leftfrac}[2]{#2  \backslash #1}	% Right quotient set
\newcommand{\frat}{\Phi}    %Frattini subgroup
\newcommand{\fix}{\operatorname{fix}}

\newcommand{\IZ}{\mathbb{Z}}    %integers
\newcommand{\IN}{\mathbb{N}}    %naturals

\newcommand{\structure}{\mathbb{S}}
\newcommand{\structA}{\mathbb{A}}
\newcommand{\structB}{\mathbb{B}}

\newcommand{\ppchar}{\chi_{\textup{pp}}}	%Charakterisierende Menge für pp-konstruktionen
\newcommand{\condchar}{\chi_{\condition}}	%Charakterisierende Menge für H1-Gleichungen
\newcommand{\condcharred}{\chi_{\condition}^r}	%Charakterisierende Menge für H1-Gleichungen
\newcommand{\condition}{\Sigma}

\newcommand{\minoraction}[2]{\leftindex_{{#1}}{{#2}}}

\newcommand{\ppgle}{\overset{\textup{ppg}}{\longrightarrow}}
\newcommand{\ppgeq}{\overset{\textup{ppg}}{\longleftrightarrow}}
\newcommand{\ecle}{\overset{\textup{e.} \condition}{\longrightarrow}}
\newcommand{\eceq}{\overset{\textup{e.} \condition}{\longleftrightarrow}}
\newcommand{\notecle}{\centernot{\overset{\textup{e.} \condition}{\longrightarrow}}}
\newcommand{\frattiniaction}{Frattini action}

\renewcommand{\subset}{\subseteq}

\begin{document}

\maketitle

\begin{abstract}
    Primitive positive constructions of first order structures and minor conditions have been shown to be a very useful tool in universal algebra for the study of constraint satisfaction problems. 
    However, they seemed to be very rarely studied in classical algebra such as group theory. 
    This paper fills in this gap by considering finite structures and minor conditions induced by permutation groups.
    In this setting, we provide a translation of primitive positive constructions and of implications of minor conditions into group theoretic terms. Moreover, we give a full classification, by linking both partial orders to the epimorphism poset of finite groups with trivial Frattini subgroup.
\end{abstract}

\section{Introduction}

Many problems in theoretical computer science, such as HORN-SAT, $3$-SAT, 3-colorability of graphs, directed reachability in directed graphs, reachability in undirected graphs, may be formulated as a finite constraint satisfaction problem (CSP). 
In this formulation, one has to distribute a certain amount of variables onto a fixed finite template such that some conditions are satisfied respectively return that this is impossible.
Depending on the shape of the template, this computational problem can be in different complexity classes. 
The above examples are $NP$-complete for 3-coloring of graphs and $3$-SAT problems, the HORN-SAT problem is $P$ complete, directed reachability is $NL$-complete and undirected reachability can be solved in $L$, on a logarithmic space. 
So an obvious question is, given any CSP, determine its complexity class. 
Primitive positive constructions have been proven to be a very fruitful tool to this task as on one side a primitive positive construction gives a log-space, and thus polynomial time, reduction of the related problems. On the other side, the existence of a primitive positive construction is equivalent to a purely algebraic condition and thus can be studied completely within the framework of universal algebra. 
In this setting, every template becomes a first-order structure and the existence of a primitive positive construction is completely described in terms of the polymorphism minion, a generalization of the endomorphism monoid. Note that the existence of a primitive positive construction is transitive and thus it defines a preorder on all finite structures.

A general classification of all structures up to primitive positive constructions is not known and probably similar complicated as the classification of all finite simple groups. In fact we have so far some partial classifications as the classification on undirected graphs \cite{HellNesetril}, 2-element structures \cite{vucajBodirskytwoElementPPPoset} and smooth digraphs \cite{smooth-digraphs}. 

This paper considers itself as part of the project to classify all finite structures up to primitive positive constructions. 
In this problem, group-actions or, historically correct, permutation groups appear in two contexts: As a structure within the primitive positive constructability poset and moreover as a minor condition. The second one can be considered within the poset of implications. 
We give a full classification in both contexts, linking both posets to the epimorphism poset of finite groups with a trivial Frattini subgroup.

Permutation group are of a special interest for multiple reasons. When studying CSPs it is of a wide interest to study for a given algorithm the set of structure templates whose CSP can be solved by the algorithm. For multiple algorithms, this set can be described by permutation groups:
\begin{itemize}
    \item A CSP can be solved by basic linear programming robustly in polynomial time if and only if the corresponding structure does not construct any permutation group \cite[Theorem 2]{BLP}, see also Theorem~\ref{TheoremBLPBySimpleGroups}.
    \item A CSP can be solved with the Zhuk Algorithm in polynomial time if and only if the corresponding structure admits a Taylor polymorphism \cite{ZhukFVConjecture,ZhukJournal}. (It is NP complete otherwise.)
    By the cyclic terms theorem \cite[Theorem 4.2]{Cyclic}, this is equivalent to the existence of a permutation group which is not primitively positively constructed.
\end{itemize}
These applications are usually phrases in terms of minor conditions. Getting to know which minor conditions correspond to which structures and which of them pp-construct each other is really helpful.

Moreover, there is the recent result by Meyer and Starke \cite{meyerStarke2024finitesimplegroups} analyzing the full poset of all finite structures ordered by pp-constructions which gives a classification of the simplest non-trivial layer of this poset. It consists of equivalence classes of structures, where representations are given by $\T_3$, a graph on three vertices, and for each finite simple group $G$ a specific $G$-set called $\prim(G)$. So studying all $G$-sets for all groups $G$ is the next step to take. We give in this paper a new explanation why among all groups exactly the simple groups appear: These groups are the topmost groups in the poset of all finite groups ordered by epimorphisms.

In more details, we show in this paper the following classification:
\begin{theorem}
	It is possible to assign to every finite group action $G\act X$ two classes of finite groups with trivial Frattini subgroups $\ppchar(G\act X)$ and $\condchar(G\act X)$ such that
	\begin{itemize}
		\item $\ppchar(G\act X)$ contains finitely many groups up to isomorphism and is closed under group epimorphisms. For every such set, there exists a group action that is characterized by this set.
		\item $\condchar(G\act X)$ contains a finite sets $\condcharred(G\act X)$ of groups and all preimages of it with respect to group epimorphisms (from a group with trivial Frattini subgroup). For every such set, there exists a group action that is characterized by this set.
		\item The structure associated to the group action $G\act X$ pp-constructs the structure associated to a group action $G'\act X'$ if and only if $\ppchar(G'\act X') \subseteq\ppchar(G\act X)$.
		\item The condition associated to the group action $G\act X$ implies the condition associated to a group action $G'\act X'$ if and only if $\condchar(G'\act X') \subseteq\condchar(G\act X)$.
		\item The structure associated to the group action $G\act X$ satisfies the condition associated to a group action $G'\act X'$ if and only if $\condchar(G'\act X') \cap\ppchar(G\act X) = \emptyset$.
	\end{itemize}
\end{theorem}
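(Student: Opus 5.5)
We will build everything on the standard algebraic description of pp-constructions and minor conditions. For a finite structure $\structA$, $\structA$ pp-constructs $\structB$ iff there is a minion homomorphism $\Pol(\structA)\to\Pol(\structB)$. A minor condition $\condition$ is satisfied by $\structA$ iff there is a minion homomorphism from the free minion of $\condition$ to $\Pol(\structA)$. For a group action $G\act X$, the associated structure has the orbits of $G$ on tuples as relations. Its polymorphism minion should be the minion of functions $X^n\to X$ that are generated by projections twisted by group elements. Equivalently, it is the minion $\minoraction{G}{X}$ whose $n$-ary elements are pairs (coordinate, group element) modulo the point stabilisers. The associated minor condition is the presentation of this minion by its binary/unary identities.

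Our first step is to compute minion homomorphisms $\minoraction{G}{X}\to\minoraction{G'}{X'}$ group-theoretically. We expect a minion homomorphism to be determined by its image on the unary part. This should reduce to the existence of a group homomorphism from a subgroup $H\le G$ (a stabiliser-type subgroup obtained from the action) onto a group $K$ acting compatibly on $X'$. This is the place where a "section" (subquotient) relation appears.

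The second step is to define $\ppchar(G\act X)$ as the set of Frattini-free groups $K$ for which $G\act X$ pp-constructs some faithful action of $K$. These are the groups "realised" by $G\act X$ through subquotients of point stabilisers. Finiteness up to isomorphism is then clear, since such $K$ are quotients of subgroups of $G$. Closure under epimorphisms comes from composing with the quotient minion map, using that quotients of Frattini-free groups can be replaced by Frattini-free ones via $K/\frat(K)$. Dually, $\condchar(G\act X)$ is the set of Frattini-free $K$ such that the condition of $G\act X$ fails in the canonical structure of $K$ (e.g.\ $\prim(K)$-like actions). Its minimal elements form $\condcharred$, and upward closure under preimages follows because a condition failing in a quotient's structure fails in the preimage's structure.

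For the three equivalences, we would show that pp-constructibility between group structures is detected by these test groups. If $\ppchar(G'\act X')\subseteq\ppchar(G\act X)$, we build $G'\act X'$ from the realised groups via products and pp-definitions, using that $G'\act X'$ is pp-constructible from the regular actions of the relevant subquotients. Here we reduce via the Frattini subgroup: an action of $K$ and of $K/\frat(K)$ should be pp-equivalent, because generators lift. This is precisely why only Frattini-free groups appear, and we expect it to be the main obstacle. We would try to prove it by showing that any minion homomorphism into $\minoraction{K/\frat(K)}{}$ lifts, using that any set generating $K$ modulo $\frat(K)$ generates $K$. The satisfaction statement then follows by combining: $\structA$ satisfies $\condition$ iff no test group where $\condition$ fails is constructed by $\structA$. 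The implication statement follows from satisfaction by quantifying over all structures $\prim(K)$. For the realisation claims, we would take for a given epimorphism-closed finite set the disjoint union of regular actions of its maximal members, and for $\condchar$ the dual construction from the minimal elements of $\condcharred$.
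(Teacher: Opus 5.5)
There is a genuine gap, and it starts at the foundations. The structure $\structure(G\act X)$ has the graphs $v_g=\{(x,g.x)\mid x\in X\}$ as relations, not the $G$-orbits on tuples. Its polymorphism minion is therefore $\Pol_G(X)$, the minion of \emph{all} $G$-equivariant maps $X^n\to X$, and not a minion of twisted projections $x\mapsto g.x_i$. For example, the ternary majority on $\IZ/2\IZ$ is equivariant for $\IZ/2\IZ\act\IZ/2\IZ$ and symmetric. Indeed $\Pol_{\IZ/2\IZ}(\IZ/2\IZ)\models\condition(\Sym(n)\act[n])$ for odd $n\ge 3$, which is impossible for twisted projections. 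Likewise, the condition attached to $H\act Y$ is the loop condition asking for a $Y$-ary $f$ with $\minoraction{h}{f}=f$ for all $h\in H$; it is not a presentation of a minion.

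So your first step analyses the wrong object, and the argument that actually produces subquotients is missing. In the paper it runs as follows. If $\Pol_G(\prim(G))\not\models\condition(H\act\prim(H))$, then Lemma~\ref{LemmaConditionElementary} gives $t\in\prim(G)^{\prim(H)}$ with $\stab_G(H.t)$ acting without fixed point. Minimal fixed-point freeness of Frattini actions forces $\stab_G(H.t)=G$, and then $\stab_H(G.t)=H$. The biaction Lemma~\ref{LemmaIsoOfSubquotient} yields $G/\stab_G(t)\iso H/\stab_H(t)$. Finally $\stab_H(t)=\frat(H)$, because the point stabilisers of $\prim(H)$ are exactly the maximal subgroups of $H$. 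This is where $\frat$ enters, not through lifting of generators. Passing from $K\act\prim(K)$ to $K/\frat(K)\act\prim(K/\frat(K))$ is just a quotient action (Lemma~\ref{LemmaFrattiniQuotient}), not the obstacle you anticipate.

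Your test objects are also wrong. Defining $\ppchar(G\act X)$ through ``some faithful action of $K$'' is degenerate. Pp-constructions work up to homomorphic equivalence, and every structure constructs the faithful action $K\act(K\sqcup\{*\})$, which is homomorphically equivalent to a point. Even restricting to transitive faithful actions does not help. The regular action of $(\IZ/2\IZ)^n$ is a subgroup action of the $n$-th full power of $\IZ/2\IZ\act\IZ/2\IZ$, for every $n$. So $K$ need not be a subquotient of $G$, and your finiteness argument collapses. You must test against the minimal fixed-point-free action $K\act\prim(K)$. Identifying the resulting class with the Frattini-free subquotients $H/N$, where $H\le G$ is minimal fixed-point free on $X$, is then exactly what needs proof.

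Regular actions are the wrong building blocks for the same reason. The minimal fixed-point-free subgroups of $A_5\act A_5$ all have prime order, so $A_5\notin\ppchar(A_5\act A_5)$. Hence neither your realisation of $\{\{1\},A_5\}$ nor your ``if'' direction works; you need $\prod_{K\in M}K\act\prod_{K\in M}\prim(K)$.

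Finally, the two reductions that carry the classification are absent.
(a) Suppose $\Pol_G(X)\not\models\condition(H\act X')$ for every minimal fixed-point-free $H\le G'$. Then the minor action of $G'$ on $\Pol^{X'}_G(X)$, which $G\act X$ constructs, is homomorphically equivalent to $X'$ (Theorem~\ref{TheoremConstructingGroupActionA}). One direction uses projections; the other sends $f$ to a fixed point of $\stab_{G'}(f)$ in $X'$.
(b) $\condition(H\act Y)$ is equivalent to $\{\condition(S\act\prim(S))\mid S\le H\text{ fixed-point free on }Y\}$ (Lemma~\ref{LemmaConditionsCompose} and Corollary~\ref{CorollaryConditionToPrimitive}).
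Without (b) you get neither the finiteness of $\condcharred$ nor the satisfaction and implication statements, which your sketch only asserts ``by combining''.
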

For more details on the classification, we refer to Sections~\ref{SectionClassificationsGeneral} and~\ref{SectionApplyUniversalAlgebra}.

To enhance readability, the paper is split in two parts. The group theoretic part consisting of Sections~\ref{SectionPreliminariesGroups}, \ref{SectionPPGdefinieren} and \ref{SectionClassification} is self-contained in the sense that it does not assumes any knowledge about universal algebra. Every definition and theorem is completely described in terms of group theory.
In the universal algebra part, consisting of Sections~\ref{SectionPreliminariesUniversalAlgebra} and \ref{SectionApplyUniversalAlgebra}, these results are translated into common universal algebraic language and we show that the reintroduced terms defined in the first part coincide with the standard definition.

In Section~\ref{SectionPreliminariesGroups}, we introduce common notions from group theory. In Section~\ref{SectionPPGdefinieren}, we introduce the new notions of a primitive positive group construction, minor condition and elementary implication of a minor condition. In the following Section~\ref{SectionClassification}, we classify these descriptions completely and show in Section~\ref{SectionApplyUniversalAlgebra} that they indeed agree with the more complicated standard notions from universal algebra, given in Section~\ref{SectionPreliminariesUniversalAlgebra}.

Finally, the Appendix~\ref{SectionExamplesConditions} provides examples of permutation groups and their interpretation as minor conditions in terms of the classification.

%\tableofcontents
\section{Preliminaries from group theory}
\label{SectionPreliminariesGroups}

All sets and groups are considered to be finite. We assume the reader to be familiar with the notion of a group.

The notions are similar as in \cite{meyerStarke2024finitesimplegroups} whenever we overlap with this paper.

We write $[n]$ for the set $\{1,2,\dots,n\}=\IN\cap [1,n]$.

\subsection{Group Actions}
A group action of a group $G$ on a set $X$ is a map $\alpha\colon G \times X \to X$ such that $\alpha((gh),x)=\alpha(g,\alpha(h.x))$ and $\alpha(1,x)=x$ for all $g,h\in G$ and $x\in X$. This action is denoted by $G\act_\alpha X$. If $\alpha$ is clear from the context, we also write $G\act X$ for $G\act_\alpha X$ and $g.x$ for $\alpha(g,x)$. A $G$-set is a set together with a $G$-action. Note that $X$ might be empty.

If $G\act X$ and $H\le G$ is a subgroup, we denote the fixed points $\{x\in X\mid \forall h\in H: h.x=x\}$ of $H$ in $X$ with $\fix_X(H)$ and the set of $H$-orbits with $X/H$. The orbit of $x\in X$ is denoted $H.x$.
The \de{(set) stabiliser} of a subset $S\subset X$ with respect to $G$ is denoted by $\stab_G(S)$. If $G$ is clear from the context we also write $\stab(S)$ instead of $\stab_G(S)$. If $S=\{s\}$ we also write $\stab_G(s)$ instead of $\stab_G(\{s\})$.

A homomorphism $f$ of $G$-sets from $G\act X$ to $G\act Y$ is a map from $X$ to $Y$ such that $f(g.x)=g.f(x)$ for all $g\in G$ and $x\in X$. The set of all homomorphism from $X$ to $Y$ is denoted by $\Hom_G(X,Y)$. We call two $G$-sets $X$ and $Y$ homomorphically equivalent, if both $\Hom_G(X,Y)$ and $\Hom_G(Y,X)$ are non-empty.

A \de{sub-quotient} of a group $G$ is a group that is isomorphic to $S/N$, where $S\le G$ is a subgroup and $N\normalsubgroup S$ a normal subgroup.

We will use many common $G$-sets in this article, such as the following ones:
\begin{itemize}
    \item A group $G$ is itself a $G$-set, where the action is left multiplication. This action is called \de{regular action} and will be relevant mostly for the groups $\IZ/n\IZ$ of integers modulo $n$.
    \item For every group $G$ and every set $X$, there is the \de{trivial action} $g.x=x$.
    \item The group $\Sym(X)$ of all permutations on $X$ has the \de{canonical action} on $X$ by permuting. If $X=\{1,2,\dots,n\}=[n]$, we also write $\Sym(n)\act [n]$ for this action.
    \item The action $D_n\act [n]$ is the subaction of $\Sym(n)\act [n]$ that preserves adjacent numbers where $1$ and $n$ are considered to be adjacent. Alternatively, the action $D_n\act [n]$ is the automorphism group of the regular $n$-gon where the $n$ corners are identified with the set $[n]$. So note that in our definition, $D_n$ has $2n$ elements. (There are two different standards!)
    \item The action $A_n\act [n]$ is the subaction of $\Sym(n)\act [n]$ that consists only of even permutations.
    \item The action $F\act [5]$ is the action of the subgroup of $S_5$ that is generated by the 5-cycle $(12345)$ and the 4-cycle $(1)(2354)$. The group $F$ has 20 elements.
\end{itemize}

We will also use many ways to construct new $G$-sets from given ones. 
Those, that will be relevant, are listed below.
\begin{itemize}
    \item If $G\act_\alpha X$ and $f\colon H\to G$ is a group homomorphism. Then, there is an action $H \act X$ given by $(h,x)\mapsto \alpha(f(h),x)$ called the \de{subgroup action}. Note that we do not require $f$ to be injective.
    \item If $G\act_\alpha X$, $f\colon G\to H$ is a surjective group homomorphism with kernel $N$ and $\fix_X(N)=X$, then there is an action $H \act_\beta X$, such that $\beta(f(g),x)=\alpha(g,x)$ for all $g\in G, x\in X$. This action of $H$ is called \de{quotient group action}.
    \item More general, if $G\act_\alpha X$, $f\colon G\to H$ is a surjective group homomorphism with kernel $N$, then there is an action $H \act_\beta \fix_X(N)$ on the fixed-points of $N$, such that $\beta(f(g),x)=\alpha(g,x)$ for all $g\in G, x\in X$. This action of $H$ is called \de{quotient group sub-action}.
    \item In a similar fashion, if $G\act_\alpha X$, $f\colon G\to H$ is a surjective group homomorphism with kernel $N$, then there is an action $\beta$ of $H$ on the $N$-orbits $X/N$, such that $\beta(f(g),N.x)=N.\alpha(g,x)$ for all $g\in G, x\in X$. This action of $H$ is called \de{quotient group quotient-action}.
    \item The \de{internal union action} of the $G$-actions $G\act_\alpha X$ and $G\act_\beta Y$ is the $G$-action on the disjoint union $X\sqcup Y$ given by $g.x=\alpha(g,x)$ for all $g\in G, x\in X$ and similar $g.y=\beta(g,y)$ for $y\in Y$. It has the universal property of the disjoint union.
    \item The \de{internal product action} of the $G$-actions $G\act X$ and $G\act Y$ is the $G$-action on the set $X\times Y$ given by $g.(x,y)=(g.x,g.y)$ for all $g\in G, x\in X,y\in Y$. It has the universal property of the product.
    \item The \de{power action} of the $G$-actions $G\act X$ to the exponent $n$ is the internal product action $G\act X^n$. For $n=0$, it is defined as the trivial action $G\act \{1\}$.
    \item The \de{external product action} of the actions $\{G_i \act X_i\mid i \in I\}$ is the $\prod_{i\in I}G_i$-action on the set $\prod_{i\in I} X_i$ given by $(g_i)_{i\in I}.(x_i)_{i\in I}=(g_i.x_i)_{i\in I}$ for all $g_i\in G_i, x_i\in X_i$.
    \item For a $G$-set $X$ and any set $Y$, the set $X^Y$ of all maps from $Y$ to $X$ is a $G$-set given by post-composition (or componentwise action), i.e.
    \begin{align*}
        G \times X^Y &\to X^Y \\
        (g,t)&\mapsto g.t\coloneqq (Y\to X, y\mapsto g(t(y))).
    \end{align*}
    \item For a $G$-set $Y$ and any set $X$, the set $X^Y$ of all maps from $Y$ to $X$ is a $G$-set given by pre-composition, i.e.
    \begin{align*}
        G \times X^Y &\to X^Y \\
        (g,t)&\mapsto t_g\coloneqq (Y\to X, y\mapsto t(g.y)).
    \end{align*}
    Note that this is a right action, i.e. the composition rule is replaced by $(gh).t=h.(g.t)$ respectively $t_{gh}=(t_g)_h$.
    \item For $G\act X$ and $H\act Y$, the set of homomorphisms $\Hom_H(Y^X ,Y)$ is a $G$-set given by
    \begin{align*}
        G \times \Hom_H(Y^X ,Y) &\to \Hom_H(Y^X ,Y) \\
        (g,f) &\mapsto \minoraction{g}{f}\coloneqq(t \mapsto f(t_g))
    \end{align*}
    This action is called \de{minor action} and the set $\Hom_H(Y^X ,Y)$ is also denoted $\Pol^{X}_H(Y)$.
\end{itemize}

Recall the definition of the outer semi-direct product or wreath product of a group power and a symmetric group: For a group $G$ and a finite set $X$, the wreath product $\Sym(X) \ltimes G^X$ is given by a pair $(h,f)$ where $f\colon X \to G$ is any map and $h\in \Sym(X)$ is a group element. This is a group where multiplication is given by $(h,f)\cdot(h',f')=(hh',f_{h'}\cdot f')$ where the multiplication of the maps to $G$ is given component wise in $G$. 
This definition allows us to define three more group actions:
\begin{itemize}
    \item For a $G$-set $X$ and a set $Y$, the set $X^Y$ admits an action from the group $\Sym(X) \ltimes G^X$ given by $(h,f).t=(f.t)_{h^{-1}}$, where $f.t$ is applied pointwise. We call this action the \de{full power action}. The $0$-th full power action or full power action with $Y=\emptyset$ is defined as the trivial action $\{1\}\act \{1\}$.
    \item For a $G$-set $X$ and an $H$-set $Y$, the set $X\times Y$ admits an action from the group $G \ltimes H^X$ given by $(g,f).(x,y)=(g.x,f_x.y)$, where $f=(f_x)_{x\in X}$ is the tuple in $H^X$. We call this action the \de{star product action} in reference to the star product in universal algebra, see for example \cite[Section~3.1]{absorption}.
    \item For a $G$-set $X$ and a positive integer $n$, we define the \de{star power action} as the action of the group $G \ltimes G^X \ltimes G^{X^2} \ltimes \dots \ltimes G^{X^{n-1}}$ on $X^n$, which is obtained by repeating the star product.
\end{itemize}

\subsection{The Frattini action}
It will turn out that for each group, there is one very interesting group action given by the disjoint union of all primitive group actions. 

\subsubsection{Maximal subgroups and primitive actions}
We explain primitive group actions in this section.
The following definition and the two observations are taken from \cite[Section 2.1]{meyerStarke2024finitesimplegroups}.

\begin{definition}
    A group action $G \act X$ is called \de{primitive} if $X$ has more than one element and the only  partitions of $X$ that are respected by the $G$ action are $\{X\}$ (a single partition) and $\{\{x\}\mid x \in X\}$ (the discrete partition). A $G$-set is \de{primitive} if its action is.
\end{definition}

\begin{observation}
    For the group action $G\act G/H$ on the left-$H$-cosets, we have $\stab_G(\{H\})=H$. Conversely, if $G\act X$ is a group action with a single orbit and $x\in X$, then $X$ is isomorphic to $G/\stab_G(x)$ as $G$-set, where an isomorphism is given by $G/\stab_G({x})\to X, g\stab_G({x})\mapsto g.x$. This map is well-defined.

    For different points $x$ and $g.x$ in the same orbit of a $G$-action $G\act X$, we have
    $$
        \stab_G(g.x)=g\stab_G(x)g^{-1},
    $$
    so the stabilizers are conjugated. Similarly, the $G$-sets $G/H$ and $G/H'$ are isomorphic if and only if $H$ and $H'$ are conjugated.
\end{observation}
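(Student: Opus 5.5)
The statement consists of three elementary claims, and I will verify each directly from the definitions of a group action and of cosets.

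\textbf{First claim.} I start with the action $G\act G/H$ by left multiplication, $g.(aH)=(ga)H$, and compute the stabiliser of the coset $H$. For $g\in G$ we have $gH=H$ if and only if $g\in H$. Hence $\stab_G(\{H\})=H$.

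\textbf{Second claim.} Now let $G\act X$ have a single orbit and fix $x\in X$. I define
$$\varphi\colon G/\stab_G(x)\to X,\qquad g\stab_G(x)\mapsto g.x.$$
\begin{itemize}
\item \emph{Well-defined and injective.} We have $g\stab_G(x)=g'\stab_G(x)$ if and only if $g^{-1}g'\in\stab_G(x)$, if and only if $g^{-1}g'.x=x$, if and only if $g'.x=g.x$. Reading this chain left to right gives well-definedness, and right to left gives injectivity.
\item \emph{Surjective.} This holds because $X=G.x$.
\item \emph{Equivariant.} $\varphi(h.(g\stab_G(x)))=(hg).x=h.\varphi(g\stab_G(x))$.
\end{itemize}
So $\varphi$ is an isomorphism of $G$-sets.

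\textbf{Third claim, conjugate stabilisers.} For $g\in G$ and $k\in\stab_G(x)$ we have $gkg^{-1}.(g.x)=g.(k.x)=g.x$. This gives $g\stab_G(x)g^{-1}\subseteq\stab_G(g.x)$. Applying the same inclusion to the point $g.x$ and the element $g^{-1}$ gives the reverse inclusion.

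\textbf{Third claim, $G/H\iso G/H'$ iff $H,H'$ conjugate.} Suppose $H'=gHg^{-1}$. The point $g^{-1}H'$ of $G/H'$ has stabiliser $g^{-1}H'g=H$, by the conjugation formula together with the first claim. The second claim, applied to $X=G/H'$ and the point $g^{-1}H'$, then yields $G/H\iso G/H'$.

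Conversely, suppose $\psi\colon G/H\to G/H'$ is a $G$-isomorphism. An isomorphism preserves stabilisers, so
$$H=\stab_G(H)=\stab_G(\psi(H)).$$
Here $\psi(H)=aH'$ for some $a\in G$, and by the conjugation formula $\stab_G(aH')=aH'a^{-1}$. Hence $H=aH'a^{-1}$, so $H$ and $H'$ are conjugate.

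The only step needing care is the well-definedness in the second claim, and it is handled by the chain of equivalences above.
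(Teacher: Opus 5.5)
Your proof is correct and complete. Each claim is checked directly from the definitions. The two points that need an argument are both handled properly: the chain of equivalences giving well-definedness and injectivity of $g\stab_G(x)\mapsto g.x$, and the fact that a $G$-isomorphism preserves point stabilisers.

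The paper gives no proof of its own. It presents this statement as a standard observation taken from Section 2.1 of Meyer and Starke, so your verification is the argument the paper implicitly relies on.
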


\begin{observation}
It is well known that primitive actions are closely related to maximal subgroups. The following are equivalent for a $G$-action $G\act X$:
\begin{enumerate}
    \item $G\act X$ is primitive.
    \item $G\act X$ is isomorphic to $G/M$ as $G$-set for a maximal subgroup $M<G$.
    \item $G\act X$ is transitive and there exists $x\in X$, such that $\stab_G(x)$ is a maximal subgroup of $G$.
    \item $G\act X$ is transitive and for all $x\in X$, $\stab_G(x)$ is a maximal subgroup of $G$.
\end{enumerate}
The $G$-sets $G/M$ and $G/M'$ are isomorphic if and only if $M$ and $M'$ are conjugated. See for example \cite[Corollary 1.4A and 1.5A]{DixonMortimer}.
\end{observation}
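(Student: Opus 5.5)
The plan is to prove the equivalences through the cycle (1)$\Rightarrow$(3)$\Rightarrow$(2)$\Rightarrow$(4)$\Rightarrow$(1). The whole argument rests on one standard correspondence. For a transitive $G$-set $X$ and a point $x\in X$, the $G$-invariant partitions of $X$ correspond bijectively, and order-preservingly, to the subgroups $H$ with $\stab_G(x)\le H\le G$.

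\textbf{The correspondence.} Given an invariant partition, take the block $B$ containing $x$ and send it to $H=\stab_G(B)$. Since blocks are either equal or disjoint, $g.x\in B$ holds if and only if $g.B=B$. Hence $B=H.x$, and $H\supseteq\stab_G(x)$. Conversely, given $H\ge\stab_G(x)$, take the partition $\{gH.x\mid g\in G\}$. Its parts are the images of the fibres of the map $G/\stab_G(x)\to G/H$, so they form a partition, and it is clearly $G$-invariant. One checks that the two assignments are mutually inverse, using $B=H.x$ and that $\stab_G(H.x)=H$ because $H\supseteq\stab_G(x)$. Under this correspondence the discrete partition matches $\stab_G(x)$, and the one-block partition $\{X\}$ matches $G$.

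\textbf{(1)$\Rightarrow$(3).} First I check transitivity. If $X$ had at least two orbits, the partition of $X$ into orbits would be invariant. Since $|X|>1$, it is neither $\{X\}$ nor discrete, except when every orbit is a single point. In that case $G$ acts trivially on a set of size at least $2$, so every partition is invariant; as $|X|\ge 2$ allows a non-discrete partition with at least two blocks, this again contradicts primitivity. So $X$ is transitive. Now pick any $x$. By the correspondence, the only subgroups between $\stab_G(x)$ and $G$ are these two themselves. They are distinct because $X$ is transitive and $|X|>1$. Hence $\stab_G(x)$ is maximal.

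\textbf{(3)$\Rightarrow$(2)$\Rightarrow$(4)$\Rightarrow$(1).} For (3)$\Rightarrow$(2), the first observation gives $X\cong G/\stab_G(x)$. For (2)$\Rightarrow$(4), the action on $G/M$ is transitive and the stabiliser of the coset $M$ is $M$. By the conjugation formula, every stabiliser is then $gMg^{-1}$, and a conjugate of a maximal subgroup is maximal. For (4)$\Rightarrow$(1), maximality gives $\stab_G(x)\neq G$, so $|X|=[G:\stab_G(x)]>1$. The correspondence then leaves only the two trivial partitions.

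The final claim, that $G/M\cong G/M'$ if and only if $M$ and $M'$ are conjugate, is already in the first observation.

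\textbf{Main obstacle.} The only delicate step is verifying that the correspondence is bijective, in particular that $H.x$ is a block and that $\stab_G(H.x)=H$. Everything else is bookkeeping. Alternatively, all of this can be cited from \cite[Corollary 1.4A and 1.5A]{DixonMortimer}.
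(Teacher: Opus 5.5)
Your overall route is the standard one: it is essentially the block/overgroup correspondence of Dixon--Mortimer's Theorem~1.5A, which is what the paper's citation relies on. The correspondence itself, the maximality argument in (1)$\Rightarrow$(3), and the steps (3)$\Rightarrow$(2)$\Rightarrow$(4)$\Rightarrow$(1) are all correct.

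The step that fails is your derivation of transitivity from primitivity. The claim that ``$|X|\ge 2$ allows a non-discrete partition with at least two blocks'' is false for $|X|=2$, because a two-element set has only the partitions $\{X\}$ and the discrete one. So the trivial action of any group $G$ on a two-element set satisfies the paper's literal definition of primitive: it has more than one element and respects only the two trivial partitions. Yet it is not transitive, and its point stabilisers are all of $G$, which is not a maximal subgroup. Here (1) holds and (3) fails, so no argument can close this step as stated.

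The remedy is a convention, not a better proof: primitivity has to include transitivity. This is how the cited Dixon--Mortimer treats it, where Corollary~1.5A is stated for transitive groups. It is clearly what the paper intends, since it asserts that $\{1\}$ has no primitive action and needs $G\act\prim(G)$ to be fixed-point free. The trivial action on two points is exactly the only intransitive action the literal definition admits. You should state this assumption or exception explicitly rather than claim to derive transitivity.

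For $|X|\ge 3$ your orbit argument is fine. There a trivial action preserves, for instance, the partition with one two-element block and singletons elsewhere. With transitivity built into (1), your proof is complete.
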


\subsubsection{The Frattini subgroup and minimal fixed-point free actions}
\label{SectionFrattini}
We explain the Frattini action action and its close link to the Frattini subgroup in this subsection.

The \de{Frattini subgroup} $\frat(G)$ of a group $G$ is the intersection 
$$\bigcap_{M\le G \text{ maximal subgroup}} M$$
of all its maximal subgroups. The Frattini subgroup is always a normal subgroup as the conjugations of a maximal subgroups are maximal.

We call a $G$-action on a set $X$ \de{minimal fixed-point free}, if it has no fixed point, but every subgroup action $H\act X$ of a proper subgroup $H\le G$ has a fixed point.
Let $\prim(G)$ for $G\ne \{1\}$ be the internal union of all primitive $G$-actions. We call the $G$ action $G\act \prim(G)$ \de{\frattiniaction{}}. The set $\prim(\{1\})$ is the empty set as we will discuss in Section~\ref{SectionEmptyAction}.
\begin{lemma}[{Lemma 4.6 from \cite{meyerStarke2024finitesimplegroups}}] \label{LemmaMinimalFixedPointFreeHomEquiv}
    Let $G$ be a finite group of order at least two acting on a set $X$. Then $\S(G\act X)$ is homomorphically equivalent to $\S(G\act \prim(G))$ if and only if the action of $G$ on $X$ is a minimal fixed point free action.
    
    In particular, the action of $G$ on $\prim(G)$ is a minimal fixed point free action.
\end{lemma}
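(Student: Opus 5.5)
The plan is to reduce the statement to a criterion about stabilisers. I will use the fact (from the setup of \cite{meyerStarke2024finitesimplegroups} that the paper adopts) that homomorphisms between the structures $\S(G\act X)$ and $\S(G\act Y)$ are exactly the $G$-set homomorphisms $X\to Y$. So homomorphic equivalence of the structures is the same as non-emptiness of both $\Hom_G(X,\prim(G))$ and $\Hom_G(\prim(G),X)$. The main tool is the following standard fact. For a transitive $G$-set $G/H$ and any $G$-set $Y$, there is a $G$-map $G/H\to Y$ if and only if $H\le \stab_G(y)$ for some $y\in Y$; the map is $gH\mapsto g.y$, which is well defined exactly when $H$ fixes $y$. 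A map out of a disjoint union of orbits is just a map on each orbit. Hence $\Hom_G(X,Y)\neq\emptyset$ if and only if, for every $x\in X$, the stabiliser $\stab_G(x)$ fixes some point of $Y$.

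First I would treat maps $X\to\prim(G)$. Since $|G|\ge 2$, $G$ has a maximal subgroup, so $\prim(G)\ne\emptyset$. By the observations on primitive actions, the point stabilisers in $\prim(G)$ are exactly the maximal subgroups of $G$. A subgroup $H$ fixes a point of $\prim(G)$ iff $H$ lies in some maximal subgroup, i.e.\ iff $H$ is proper. So $\Hom_G(X,\prim(G))\ne\emptyset$ holds iff no $\stab_G(x)$ equals $G$, that is, iff $G\act X$ has no fixed point.

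Second I would treat maps $\prim(G)\to X$. By the criterion, such a map exists iff every maximal subgroup $M$ of $G$ fixes a point of $X$. This is equivalent to every proper subgroup $H<G$ fixing a point. Indeed, each proper $H$ lies in some maximal $M$, and any point fixed by $M$ is fixed by $H$; the converse is trivial. Every proper subgroup having a fixed point is precisely the second half of the definition of minimal fixed-point free. Combining the two steps gives the equivalence.

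The "in particular" clause follows by applying the equivalence with $X=\prim(G)$, since $\S(G\act\prim(G))$ is trivially homomorphically equivalent to itself. I expect no real obstacle. The only points needing care are confirming the convention that structure homomorphisms are $G$-equivariant maps, and using $|G|\ge2$ so that maximal subgroups exist and $\prim(G)$ is non-empty.
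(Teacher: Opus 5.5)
Your proof is correct. You reduce homomorphic equivalence to $G$-maps, and use that $G/H\to Y$ exists iff $H$ fixes a point of $Y$. This yields exactly the two halves of minimal fixed-point freeness: a map into $\prim(G)$ exists iff $X$ has no fixed point, and a map out of $\prim(G)$ exists iff every maximal, hence every proper, subgroup fixes a point. Your use of $|G|\ge 2$ to make $\prim(G)$ non-empty also handles the case $X=\emptyset$ correctly.

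The paper gives no proof of its own here; it only cites \cite{meyerStarke2024finitesimplegroups}. Your argument is the standard stabiliser/fixed-point one that this lemma rests on, so there is nothing to add.
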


Moreover, $G\act \prim(G)$ is a group action, such that an element $g\in G$ acts trivial if and only if it is in the Frattini subgroup $\frat(G)$, as it needs to be in all point stabilizer goups, which are exactly the maximal subgroups.

If $N\normalsubgroup G$ is a normal subgroup then the maximal subgroups of $G/N$ are exactly the quotients $M/N$ where $M\le G$ is a maximal subgroup that contains $N$. This follows immediately from the fact that subgroups of $G/N$ are given by subgroups of $G$ that contain $N$.
Moreover, the $G/N$-group action $\prim(G/N)$ and the quotient group sub-action on $\fix_{\prim(G)}(N)$ are isomorphic.

The quotient $G/\frat(G)$ has a trivial Frattini subgroup as the maximal subgroups of $G/\frat(G)$ are exactly the quotients of the maximal subgroups of $G$.
As $\fix_{\prim(G)}(\frat(G))$ is all of $\prim(G)$, the quotient group action $G/\frat(G) \act \prim(G)$ is well-defined and isomorphic to the \frattiniaction{} $G/\frat(G) \act \prim(G/\frat(G))$.

\subsection{The empty action}
\label{SectionEmptyAction}

We allow in addition to non-empty $G$-sets also the \de{empty action} $G\act \emptyset$, the action of any group on an empty set. This decision differs from \cite{meyerStarke2024finitesimplegroups} but simplifies the theory. As $\{1\}$ has no primitive action, we define $\prim(\{1\})$ to be $\emptyset$. Note that this definition extends Lemma~\ref{LemmaMinimalFixedPointFreeHomEquiv} to the trivial group. Recall that we defined the $G$-set $\emptyset^0$ to be a single point with the trivial action.

\subsection{Biactions}
On the set $X^Y$ of all maps from a $G$-set $X$ to an $H$-set $Y$, we have both a (left) $G$ and a (right) $H$-action. The interplay of the actions gives a possibility to link a subquotient of $G$ with a subquotient of $H$.

\begin{lemma}[{\cite[Lemma~2.4]{meyerStarke2024finitesimplegroups}}]
\label{LemmaIsoOfSubquotient}
    Let $G\act X$ and $H\act Y$ be group actions and let $t\in X^Y$. Let $Z_t=G.t\cap H.t\subseteq X^Y$.
    Then, the maps
    \begin{align*}
        \stab_G(H.t)/\stab_G(t) &\to Z_t \text{ and} & \leftfrac{\stab_H(G.t) }{\stab_H(t)}&\to Z_t\\
        g\stab_G(t) &\mapsto g.t & \stab_H(t) h &\mapsto t_h
    \end{align*}
    are bijections. 
    Moreover, $\stab_G(t)\normalsubgroup \stab_G(H.t)\le G$ and also $\stab_H(t)\normalsubgroup \stab_H(G.t)\le H$  
    and the above maps induce an isomorphism
    $$
        \stab_G(H(t)) / \stab_G(t) \iso \stab_H(G(t)) /\stab_H(t)
    $$
    of groups.
\end{lemma}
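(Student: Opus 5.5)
The plan is to check directly that the two maps are well-defined bijections onto $Z_t$, and then define a group isomorphism by composing one bijection with the inverse of the other. I will use the pre-composition action $t\mapsto t_h$, which is a right action, together with the post-composition action $t\mapsto g.t$. These two actions commute: $(g.t)_h(y)=g.t(h.y)=g.(t_h)(y)$.

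First, the subgroup and normality claims. Commutativity gives $\stab_G(t)\le \stab_G(H.t)$, since $g.t=t$ implies $g.H.t=H.g.t=H.t$. For normality, take $s\in\stab_G(H.t)$. Then $s.t=t_h$ for some $h\in H$, and so $\stab_G(s.t)=\stab_G(t_h)$. The left side is $s\stab_G(t)s^{-1}$. The right side equals $\stab_G(t)$, because $g.t_h=(g.t)_h$ and $h$ acts invertibly. Hence $s$ normalises $\stab_G(t)$. The argument for $H$ is symmetric.

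Second, the bijections. The map $g\stab_G(t)\mapsto g.t$ is well-defined and injective by the orbit–stabiliser correspondence. Its image lies in $G.t$, and lies in $H.t$ because $g\in\stab_G(H.t)$ gives $g.t\in g.H.t=H.t$. For surjectivity, suppose $z=g.t=t_h$. Then $g.H.t=H.g.t=H.t_h=H.t$, so $g\in\stab_G(H.t)$. The same argument works on the $H$ side with right cosets.

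Third, the isomorphism. Define $\varphi\colon \stab_G(H.t)/\stab_G(t)\to \leftfrac{\stab_H(G.t)}{\stab_H(t)}$ by $\varphi(g\stab_G(t))=\stab_H(t)h$, where $g.t=t_h$. Bijectivity follows from the second step. The map reverses the order of multiplication, because the $H$-action is a right action. To check this, suppose $g.t=t_h$ and $g'.t=t_{h'}$. Then
\begin{align*}
gg'.t=g.(t_{h'})=(g.t)_{h'}=(t_h)_{h'}=t_{hh'},
\end{align*}
so $\varphi(gg')=\stab_H(t)hh'$. This is the product $\varphi(g)\varphi(g')$ in the quotient group $\stab_H(G.t)/\stab_H(t)$. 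By normality from the first step, left and right cosets coincide, so this is an honest homomorphism into the quotient group. The conventions should be fixed with care, because $t_{hh'}=(t_h)_{h'}$ matches the paper's convention for right actions; if the order came out reversed, one would compose with inversion instead.

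The main obstacle is this bookkeeping of the left versus right actions, so that $\varphi$ is genuinely a homomorphism and not an anti-homomorphism. The identity $(t_h)_{h'}=t_{hh'}$ settles it. Everything else is routine orbit–stabiliser reasoning.
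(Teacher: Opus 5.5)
Your argument is correct and complete. The paper gives no proof of this lemma; it only defers to \cite[Lemma~2.4]{meyerStarke2024finitesimplegroups}. Your direct verification is the natural proof: the post- and pre-composition actions commute, orbit--stabiliser gives the two bijections onto $Z_t$, and $\varphi$ is one bijection composed with the inverse of the other.

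There is one expository slip to fix. Your sentence claiming that $\varphi$ reverses the order of multiplication contradicts your own computation $gg'.t=g.(t_{h'})=(g.t)_{h'}=(t_h)_{h'}=t_{hh'}$. That computation shows $\varphi(gg')=\varphi(g)\varphi(g')$ directly: the reversal coming from the right action is undone by the commutation $g.(t_{h'})=(g.t)_{h'}$. You should delete that sentence and the hedge about composing with inversion.
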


We refer the reader to \cite[Lemma~2.4]{meyerStarke2024finitesimplegroups} for the proof.

\section{Preliminaries from model theory and universal algebra}
\label{SectionPreliminariesUniversalAlgebra}

We recall the notions from universal algebra. They will not be used until Section~\ref{SectionApplyUniversalAlgebra}, in which we show that our simplified definitions, which we will define in Section~\ref{SectionPPGdefinieren} specifically for permutation groups, coincide with the standard definitions.

\subsection{Preliminaries on primitive positive constructions}
We refer to \cite{HodgesShort} for the definition of a (fist-order) structure, as well as of products and homomorphisms of them. We assume for simplicity that the structure is relational as every function symbol can be replaced by a relational symbol and we allow the base set to be empty. A primitive positive definition (pp-definition) over a $\sigma$-structure $\structA$ is a first-order definition which is build recursively from
\begin{enumerate}
    \item atomic positive relations, that are
    \begin{itemize}
        \item relations from $\sigma$, including $=$ between arbitrary variables,
        \item true (but not false),
    \end{itemize}
    \item $\exists x: \phi$ whenever $\phi$ is already pp-defined and $x$ is a variable,
    \item $\phi \land \psi$ whenever $\phi$ and $\psi$ are already pp-defined.
\end{enumerate}
Such definition with free variable set $V$ naturally defines a subset of $\structA^V$ by evaluating the formula. We write $x\models \phi$ if $x\in \structA^V$ is inside the set defined by $\phi$. We do not distinguish the structure and its underlying set in our notation.

It is also common to allow false as atomic relation. We do not follow this convention and consider nullary polymorphisms instead to obtain a clean theory.

We say that a $\sigma$-structure $\structA$ \de{primitively positively constructs} or \de{pp-constructs} a $\tau$-structure $\structB$, if there exists a non-negative integer $n$, a primitive postitive formula $\phi$ with $n$ free variables and for each $k$-ary relation symbol $R$ in $\tau$ a pp-formula $\phi_R$ in $kn$ free variables 
such that the $\tau$-structure with base set $\{\Vec{x}\in \structA^n \mid \Vec{x}\models \phi\}$, where the points $(\Vec{x}_1,\dots, \Vec{x}_k)$ are in a $k$-ary relation $R$ if and only if
$$
    (x_{1,1},\dots, x_{n,1},x_{1,2},\dots,x_{n,k-1},x_{1,k},\dots, x_{n,k}) \models \phi_R,
$$
is homomorphically equivalent to $\structB$. Note that the formula $\phi$ can always be chosen to be true, except for the case where $\structB$ is empty.

For a $\sigma$-structure $\structA$ and a $\tau$-structure $\structB$, we define their external direct product as the $\sigma \sqcup \tau$-structure wich has as base set the product of the base sets of $\structA$ and $\structB$ and as relations the full preimages of the projections. A structure pp-constructs an external product if and only if it pp-constructs every factor.

\subsection{Preliminaries on minors}
A \de{minion} $\clone$ consists for each finite set $N$ of a set $\clone_N$ of so-called $N$-ary or $|N|$-ary elements. We sometimes identify $\clone$ with the union $\bigcup_{n\in \IN \cup \{0\}} \clone_n$. Moreover, for each map $\alpha\colon N \to M$, there is a map $\clone_N \to \clone_M, f\mapsto \minoraction{\alpha}{f}$ such that 
\begin{align*}
    \forall f: &&\minoraction{\id}{f}&=f
    \\
    \forall f, \forall \alpha, \beta:&&\minoraction{(\alpha\beta)}{f}&=\minoraction{\alpha}{(\!}\minoraction{\beta}{f})
\end{align*}
hold whenever the arities match. Note that this implies that if $\alpha\colon N \to M$ is a bijection, then the induced map $\clone_N \to \clone_M$ is also a bijection. Equivalently, a minion can be defined as a functor from finite sets to sets.

It is also common to exclude $0$ from the arities. However, including $0$ massively simplifies the consideration of the empty action $\{1\}\act \prim(\{1\})$ later. See also \cite{Nullary} for minions with nullary elements. Note that the set of nullary elements might be empty.

A minor preserving map or \de{minion homomorphism} is a map $F\colon \clone \to \cloneD$ between minions that preserves arity and for each map $\alpha\colon N \to M$, we get $\minoraction{\alpha}{(Ff)}=F(\minoraction{\alpha}{f})$.

A \de{clone} is a minion that has in addition a composition map i.e. an operad structure which is compatible with the minion structure.
\subsection{Minor conditions}
A \de{minor condition} is a property of a minion. Precisely speaking, a minor condition 
consists of two natural numbers $n$ and $m$ and sets $N(1),\dots,N(n)$ and maps $i,j\colon [m] \to [n]$ as well as maps $\alpha_k\colon N(i(k))\to N(j(k))$ for all $k\in [m]$.

We say that a minion $\clone$ satisfies a minor condition $\mathcal{M}$, written $\clone \models \mathcal{M}$, if $\exists f_1\in \clone_{N(1)},\dots, \exists f_n\in \clone_{N(n)}$ and
    $$
        \forall k\in [m] : \minoraction{\alpha_k}{f_{i(k)}}=f_{j(k)}.
    $$

It is easy to see that if $\clone$ and $\cloneD$ are minions, $\clone$ satisfies a minor condition and there is a minion homomorphism $\clone \to \cloneD$, then also $\cloneD$ satisfies the condition.

\begin{example}
    For a group $G$ acting on a set $X$, there is the minor condition for a minion $\clone$ described as
    \begin{align*}
        \exists f \in \clone_X : 
        \forall g \in G: \minoraction{g}{f}=f
    \end{align*}
    where $\minoraction{g}{f}((x_i)_{i\in X})$ is defined as $f((x_{g.i})_{i \in X})$.
    we denote this condition by $\condition(G\act X)$.
\end{example}

\subsection{The polymorphism minion}
For a relational $\sigma$-structure $\structA$, we define the \de{polymorphism minion} of $\structA$, $\Pol(\structA)$, such that $\Pol(\structA)_N$ is the set of all $\sigma$-homomorphisms from the product $\structA^N$ to $\structA$. Recall that $N$ can be any finite set and $\structA^N$ is isomorphic to $\structA^{|N|}$. For each map $\alpha\colon N \to M$, the map $\Pol(\structA)_N \to \Pol(\structA)_M, f\mapsto \minoraction{\alpha}{f}$, is defined by
$$
    \minoraction{\alpha}{f}(x_1,\dots,x_{|M|})=f(x_{\alpha(1)},\dots,x_{\alpha(|N|)})
$$
where we identify $N$ with $|N|$ and $M$ with $|M|$ to be able to write $N$-tuples and $M$-tuples. The structure $\structB^\emptyset=\structB^0$ consists of a single point with all relations.

Nullary polymorphisms and constant unary polymorphisms are closely related and divide the structures into three classes:
\begin{itemize}
        \item If a structure $\structA$ has a nullary polymorphism, then it also has a constant polymorphism and moreover, for every element $a\in \structA$, there is a constant polymorphism onto $a$ if and only if there is a nullary polymorphism onto $a$.
        Such structures exist, such as sets over an empty signature.
        \item There are structures without a nullary polymorphisms, that have no constant polymorphism such as a graph without any edge. For such a graph $G$, its $0$-th power is the graph with a single point and a loop, which cannot be mapped to $G$.
        \item If a structure has no constant polymorphism, then it has no nullary polymorphism by the first bullet. A graph that has an edge, but no loop is an example of such a structure.
\end{itemize}

\begin{example}\label{ExampleGSet}
    We can also turn a $G$-set into a relational structure.

    Let $G$ be a group and $X$ a $G$-set. Then, there is a relational structure $\structure(G\act X)$ over the signature $\sigma=\{v_g\mid g\in G\}$, where each relation symbol $v_g$ is binary, the base set is $X$ and the interpretation of $v_g$ is given by $\{(x,g.x)\mid x\in X\}$.

    For two $G$-structures $X$ and $Y$, we get 
    $$\Hom_G(X,Y)=\Hom_\sigma(\structure(G\act X),\structure(G\act Y))$$
    and $(\structure(G\act X))^n$ equals $\structure(G\act X^n)$. Thus, the polymorphisms of $X$ as $G$-set are exactly its polymorphisms as $\sigma$-structure.

    A $G$-set has a nullary polymorphism if and only if it has a fixed point. The empty action has a constant, but no nullary operation. All other $G$-sets have no constant polymorphism.
\end{example}

\begin{theorem}[{\cite[Theorem 1.3]{Wonderland}}]
    \label{TheoremReflections}
    Let $\structA$ and $\structB$ be finite relational structures. Then, the following are equivalent:
    \begin{enumerate}
        \item There is a minion homomorphism $\Pol(\structA)\to \Pol(\structB)$.
        \item The structure $\structA$ primitively positively constructs the structure $\structB$.
        \item The minion $\Pol(\structB)$ satisfies all minor conditions satisfied of $\Pol(\structA)$.
    \end{enumerate}
\end{theorem}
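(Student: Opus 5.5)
The plan is to prove the cycle $(2)\Rightarrow(1)\Rightarrow(3)\Rightarrow(2)$. Throughout I will check the degenerate cases that arise from allowing nullary operations and empty structures. The implication $(1)\Rightarrow(3)$ is already noted in the preliminaries: if $f_1,\dots,f_n\in\Pol(\structA)$ witness a minor condition and $F$ is a minion homomorphism, then $F(f_1),\dots,F(f_n)$ witness it in $\Pol(\structB)$, because $F$ preserves arities and commutes with minors.

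For $(2)\Rightarrow(1)$ I will factor a pp-construction into three steps: a pp-definition of a base set in $\structA^n$, relations pp-definable over $\structA$, and a homomorphic equivalence.
\begin{itemize}
\item If every relation of $\structA'$ is pp-definable in $\structA$, then $\Pol(\structA)\subseteq\Pol(\structA')$ arity by arity. This is proved by induction on pp-formulas: polymorphisms preserve the atomic relations, conjunctions and projections.
\item Passing to the pp-power with base set $\{\vec x\in\structA^n\mid \vec x\models\phi\}$, I send an $N$-ary $f$ to the map that applies $f$ coordinatewise to $n$-tuples. This commutes with minors and preserves the defined relations.
\item For a homomorphic equivalence given by $h\colon\structA'\to\structB$ and $h'\colon\structB\to\structA'$, I send an $N$-ary $f$ to $h\circ f\circ (h')^N$. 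This also commutes with minors.
\end{itemize}
In the nullary case $\structA^\emptyset$ is a single point carrying all relations, so these maps still make sense; this is the only place where the convention $\structB^0=$ one point matters.

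For $(3)\Rightarrow(2)$, let $b_1,\dots,b_m$ enumerate $\structB$, so $m=0$ if $\structB$ is empty. I will build the "free structure" $\structA'$ directly.
\begin{itemize}
\item Its base set is $\Pol(\structA)_{[m]}\subseteq \structA^{\structA^m}$. This is pp-definable in the $|\structA|^m$-th power, since being a polymorphism is a conjunction of atomic conditions.
\item For each $k$-ary relation symbol $R$, view $R^{\structB}$ as a finite index set with projections $\pi_i\colon R^{\structB}\to[m]$. A tuple $(f_1,\dots,f_k)$ lies in $R^{\structA'}$ iff there is $g\in\Pol(\structA)_{R^{\structB}}$ with $f_i=\minoraction{\pi_i}{g}$ for all $i$. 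This is pp: existentially quantify the values of $g$, require $g$ to be a polymorphism, and state equalities between the values of the $f_i$ and of $g$.
\end{itemize}

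Next I need a finite minor condition $\mathcal M$ that Pol$(\structA)$ satisfies and that yields the required homomorphisms.
\begin{itemize}
\item The variables of $\mathcal M$ are one symbol per element of $\Pol(\structA)_{[m]}$ and of $\Pol(\structA)_{R^{\structB}}$ for each $R$, together with the $m$ unary projections.
\item The equations of $\mathcal M$ record all minor identities $f_i=\minoraction{\pi_i}{g}$ and also the projection identities.
\item $\Pol(\structA)$ satisfies $\mathcal M$ by interpreting every symbol as itself, so by (3) there is a solution $F$ in $\Pol(\structB)$.
\item The map $\structA'\to\structB$ is $f\mapsto F(f)(b_1,\dots,b_m)$. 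It is a homomorphism: if $f_i=\minoraction{\pi_i}{g}$, then $F(f_i)(\vec b)=F(g)$ evaluated on the tuples of $R^{\structB}$, and this tuple lies in $R^{\structB}$ because $F(g)$ is a polymorphism.
\item The map $\structB\to\structA'$ is $b_j\mapsto$ the $j$-th projection $e_j\in\Pol(\structA)_{[m]}$. For $(b_{j_1},\dots,b_{j_k})\in R^{\structB}$, the witness $g$ is the projection of $\structA^{R^{\structB}}$ onto that tuple, and its $\pi_i$-minors are $e_{j_i}$.
\end{itemize}
Thus $\structA'$ is homomorphically equivalent to $\structB$, which gives (2).

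I expect this last step to be the main obstacle. The difficulty is not the construction itself but checking that $F(e_j)$ really is the $j$-th projection, or at least something that makes $f\mapsto F(f)(\vec b)$ send the image of $\structB$ back correctly. I would handle this by including the projection identities in $\mathcal M$, or more cleanly by avoiding the issue and only requiring the two homomorphisms in the definition of homomorphic equivalence. A second point to check is that this argument, together with the cycle, shows the evaluation map restricted to these finitely many arities extends to a genuine minion homomorphism. Finally, I will verify the edge case where $\structB$ is empty separately: here $m=0$, $\phi$ may be unsatisfiable, and the existence of nullary polymorphisms governs both sides.
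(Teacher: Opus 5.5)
Your proof is correct, but it does not follow the paper, which gives no proof of this statement: it is quoted from \cite{Wonderland}. There the result is obtained on the algebraic side. Pp-constructions are matched with closing $\Pol(\structA)$ under expansions, reflections and finite powers. That closure is then characterised by height-1 (minor) identities through a Birkhoff-type free-algebra argument. Finally, a compactness argument for locally finite clones passes from ``all finite minor conditions'' to an actual minion homomorphism.

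You instead run the free-structure argument directly on the relational side. Your $\structA'$ is exactly the free structure of $\Pol(\structA)$ generated by $\structB$. For (3)$\Rightarrow$(2) you need only one finite minor condition $\mathcal M$, read off from $\Pol(\structA)$ in the arities $[m]$ and $R^{\structB}$. This makes the argument self-contained, with no compactness and no reflection machinery. It also treats the nullary and empty conventions of this paper uniformly. The reflection route yields more, notably the $\omega$-categorical version, which your argument does not aim at.

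Several of the difficulties you anticipate are not real:
\begin{itemize}
\item The homomorphism $\structB\to\structA'$, $b_j\mapsto e_j$, uses no property of $F$. Homomorphic equivalence imposes no compatibility between the two maps. So the projection symbols and identities in $\mathcal M$ can simply be dropped, and you never need $F(e_j)$ to be a projection.
\item Item (1) follows from (2) by your first implication, so no extension argument is required.
\item The case $\structB=\emptyset$ needs no separate treatment. If $\Pol(\structA)_\emptyset\neq\emptyset$, then $\mathcal M$ contains a nullary symbol. $\Pol(\structB)$ has no nullary elements when $\structB$ is empty, so (3) forces $\Pol(\structA)_\emptyset=\emptyset$, and your formula $\phi$ then defines the empty set.
\end{itemize}

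You do implicitly assume finite signatures. This is what makes $\mathcal M$ a finite minor condition and the polymorphism constraints finite conjunctions. That is the intended setting, but it is worth stating.
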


The complexity of checking wether a structure satisfies a minor condition related to a group action depends to some extend on this action, see \cite{KK22} for a discussion.

\section{Minor conditions for permutation groups}
\label{SectionPPGdefinieren}

Recall that this article is motivated by the theory of primitive positive constructions and minor conditions in universal algebra. As permutation groups are only a special case, we can simplify these notions and replace all terms from universal algebra by notions from group theory.
We introduce the simplified notions here.

\subsection{Minors and Loop conditions} \label{SectionDefineLoopCond}

In this subsection, we define loop conditions, a special case of minor conditions from universal algebra.

In general, we want to study in this article the poset where a group action is below a second one, if the second one satisfies all loop conditions that the first one satisfies. We show that we can construct the second permutation group in this case from the first one by a construction, which we call primitive positive group construction (ppg-construction). This construction will be defined in Section~\ref{SectionDefinePPGConst} and is similar to the concept of a primitive positive construction in universal algebra.

Moreover, we can also consider the preorder on loop conditions, where one condition implies another if every permutation group satisfying the first condition also satisfies the second one. 
In this case, we can also construct the second loop condition from the first one in a somehow similar construction as the ppg-construction. 
We call this construction elementary implication and define it in Section~\ref{SectionDefineMCI}. 
We are not aware of a similar concept in universal algebra for general minor conditions and assume that if there would be one, it would need to be much more complicated.

\begin{definition} \label{DefinitionMinorForGroups}
    For a groups action $G\act X$ and a finite non-empty set $Y$, we define the \de{$Y$-ary polymorphisms} of $X$ to be the $G$-set homomorphisms $\Hom_G(X^Y,X)$. We denote the $Y$-ary polymorphism with $\Pol^Y_G(X)$ and the class of all polymorphism with $\Pol_G(X)$.

    Recall that if $Y$ is an $H$-set, then $\Pol^Y_G(X)$ becomes an $H$-set by 
    \begin{align*}
        h.f\colon X^Y &\to Y\\
        t &\mapsto f(t_h)
    \end{align*}
    where $t_h$ is the map $X\to Y, x\mapsto t(h(x))$.
    We denote the map $h.f$ by $\minoraction{h}{f}$ and call it the $h$-minor of $f$.

    If $Y$ is an $H$-set, then we say that $\Pol_G(X)$ satisfies the $H\act Y$-\de{condition}, \de{minor condition} or \de{loop condition}, as formula
    $$
        \Pol_G(X) \models \condition(H\act Y),
    $$
    if the action of $H$ on $\Pol^Y_G(X)$ has a fixed point.
\end{definition}
\begin{definition} 
    For $Y=\emptyset$ and a groups action $G\act X$, we define the \de{$Y$-ary polymorphisms} of $X$ as the set of all $G$-fixed points of $X$. So the condition $\condition(\{1\}\act \emptyset)$ is equivalent to the existence of a fixed point.
\end{definition}

In general, we want to describe, in which case $\Pol_G(X) \models \condition(H\act Y)$ holds.
There are already some partial results on this question in \cite{meyerStarke2024finitesimplegroups}, which we want to mention here.
\begin{lemma}[{\cite[Lemma 4.15]{meyerStarke2024finitesimplegroups}}]
    \label{LemmaConditionElementary}
    Let $G\act X$ and $H\act Y$ be finite group actions. Then, the following are equivalent:
    \begin{enumerate}
        \item $\Pol_G(X) \not \models \condition(H\act Y)$
        \item There exists $t\in X^Y$ such that $\stab_G(H.t)\act X$ has no fixed point.
    \end{enumerate}
\end{lemma}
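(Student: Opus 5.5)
We prove the contrapositive in both directions. Throughout, $\Pol^Y_G(X)=\Hom_G(X^Y,X)$ is the set of $G$-equivariant maps $f\colon X^Y\to X$, and $H$ acts on it by $\minoraction{h}{f}(t)=f(t_h)$. So $\Pol_G(X)\models\condition(H\act Y)$ means there is one $f$ that is simultaneously $G$-equivariant and $H$-invariant, $f(t_h)=f(t)$. Since the $G$- and $H$-actions on $X^Y$ commute, such an $f$ is the same as a $G$-equivariant map $\bar f\colon X^Y/H\to X$ on the set of $H$-orbits. This set carries the induced $G$-action $g.(H.t)=H.(g.t)$, which is well defined because the two actions commute.

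A $G$-map from a $G$-set to $X$ exists if and only if it exists orbitwise. For a single orbit $G.(H.t)\iso G/\stab_G(H.t)$, a $G$-map to $X$ exists if and only if $\stab_G(H.t)$ fixes a point of $X$. Indeed, we send the coset of $1$ to any such fixed point $x$ and extend by $g\stab_G(H.t)\mapsto g.x$; this is well defined exactly because $x$ is fixed by the stabiliser. Conversely, the image of $H.t$ under a $G$-map is fixed by its stabiliser. Hence a fixed point of $H$ on $\Pol^Y_G(X)$ exists if and only if for every $t\in X^Y$ the subgroup $\stab_G(H.t)$ has a fixed point in $X$. This is exactly the negation of (2), which gives the equivalence of (1) and (2).

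Two details need checking. The first is the correspondence between $H$-invariant $G$-maps on $X^Y$ and $G$-maps on $X^Y/H$, which is routine. The second is the degenerate cases. If $Y=\emptyset$, then $X^Y$ is a single point $t$, the group $H$ is trivial and $\stab_G(H.t)=G$. So (2) says $G$ has no fixed point, which matches the definition of nullary polymorphisms as $G$-fixed points. If $X=\emptyset$ and $Y\neq\emptyset$, then $X^Y=\emptyset$, the unique empty map is an $H$-fixed polymorphism, and (2) fails vacuously. If both $X$ and $Y$ are empty, then $X^Y$ is a point whose stabiliser $G$ has no fixed point in $\emptyset$, and (1) holds because there are no fixed points in $X$. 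So these cases are consistent with the general argument.

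The main obstacle is only bookkeeping with left versus right actions: $H$ acts on the right via $t_h$, and we have to verify that $\stab_G(H.t)$ is a subgroup and that the $G$-action on $X^Y/H$ is well defined. Both follow from $(g.t)_h=g.(t_h)$.
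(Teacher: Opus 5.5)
Your proof is correct. It is essentially the direct argument behind the cited Meyer--Starke lemma, which this paper quotes without proof: an $H$-invariant polymorphism is the same as a $G$-map $X^Y/H\to X$, and such a map exists orbitwise exactly when every $\stab_G(H.t)$ fixes a point of $X$.

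One small slip in the degenerate case $Y=\emptyset$: the group $H$ need not be trivial, since the paper allows $H\act\emptyset$ for any $H$. This does not affect your conclusion, because $H$ acts trivially on the one-point set $X^{\emptyset}$, so $H.t=\{t\}$ and $\stab_G(H.t)=G$ as you use.
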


\begin{lemma}[{\cite[Lemma 4.16]{meyerStarke2024finitesimplegroups}}]
    \label{LemmaNoSelfLoop}
    Let $G\act X$ be a finite group action without a fixed point. Then, 
    $\Pol_G(X)\not\models \condition(G\act X)$.
\end{lemma}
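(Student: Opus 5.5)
Lemma~\ref{LemmaConditionElementary} turns the statement into the task of exhibiting a single map $t\in X^Y$ with a certain property. Here $Y=X$ and $H=G$, with the same action. We need some $t\colon X\to X$ such that $\stab_G(G.t)$ acts on $X$ without a fixed point. In this setting $G$ acts on $X^X$ in two ways: by post-composition, $g.t=g\circ t$, and by pre-composition, $t_h=t\circ h$. The set $G.t$ is the orbit under post-composition. Its stabiliser $\stab_G(G.t)$, taken with respect to the pre-composition action, consists of those $h\in G$ for which $t\circ h=g\circ t$ for some $g\in G$.

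The natural choice is $t=\id_X$. Then $G.t=\{g\mid g\in G\}$ is the set of maps $x\mapsto g.x$, and $t_h=h$ again lies in $G.t$ for every $h\in G$. Hence $\stab_G(G.\id_X)=G$. By hypothesis $G\act X$ has no fixed point, so condition (2) of Lemma~\ref{LemmaConditionElementary} holds and $\Pol_G(X)\not\models\condition(G\act X)$.

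The only thing to check is that the left/right conventions match. The minor action is $t\mapsto t_h$ with $t_h(x)=t(h.x)$, so $(\id_X)_h$ is the map $x\mapsto h.x$, and this is the post-composition image $h.\id_X$. The identity therefore lies in a single orbit for both actions, whichever side each action is on, and the stabiliser is all of $G$. When $X=\emptyset$ the set $X^X$ consists of just the empty map, whose stabiliser is again $G$. Since $G\act\emptyset$ has no fixed point, the argument goes through in this case as well.

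As a sanity check without the lemma: a fixed point of the minor action would be a polymorphism $f\in\Hom_G(X^X,X)$ with $f(t_h)=f(t)$ for all $h$. Evaluating at $t=\id$ gives $f(\id)=f(h)=f(h.\id)=h.f(\id)$ for every $h$, so $f(\id)$ is a fixed point of $G\act X$, a contradiction. This direct argument is the proof I would actually write, with the lemma as an alternative justification. I expect no step to be a real obstacle beyond getting these action conventions right.
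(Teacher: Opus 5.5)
Your proof is correct, including the case $X=\emptyset$. The paper gives no proof of its own here and only cites \cite[Lemma 4.16]{meyerStarke2024finitesimplegroups}; your identity-tuple argument is the standard one, whether you run it directly ($(\id_X)_h=h.\id_X$ forces $f(\id_X)$ to be a $G$-fixed point) or via Lemma~\ref{LemmaConditionElementary} with $t=\id_X$.

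One small slip: in Lemma~\ref{LemmaConditionElementary}, the orbit $H.t$ is taken under pre-composition and $\stab_G$ under post-composition, the reverse of your description. For $t=\id_X$ the two orbits coincide and both stabilisers are all of $G$, so nothing changes.
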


\begin{lemma}[{\cite[Theorem A.4]{meyerStarke2024finitesimplegroups}}] \label{LemmaSymCondition}
    Let $G$ be a group.
    The condition $$\Pol_G(\prim(G))\models \condition(\Sym(n)\act[n])$$ holds if and only if $n$ cannot be written as a sum, where each summand is a non-negative multiple of the size of a connected component of $X$.    
\end{lemma}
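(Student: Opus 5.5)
We would derive this directly from Lemma~\ref{LemmaConditionElementary}, applied with $X=\prim(G)$ and $H\act Y=\Sym(n)\act[n]$. That lemma says $\Pol_G(\prim(G))\not\models\condition(\Sym(n)\act[n])$ if and only if there is a tuple $t\in\prim(G)^{[n]}$ such that $\stab_G(\Sym(n).t)$ has no fixed point on $\prim(G)$. We reformulate both sides of this criterion.

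\emph{The orbit $\Sym(n).t$.} It consists of all rearrangements of $t$. So it is determined by the multiplicity function $m_t\colon\prim(G)\to\IN$, $m_t(x)=|t^{-1}(x)|$, which satisfies $\sum_x m_t(x)=n$. Every such function arises from some $t$. Hence $\stab_G(\Sym(n).t)$ is the stabiliser of the multiset $m_t$, namely
$$\{g\in G\mid m_t(g.x)=m_t(x)\ \text{for all } x\in\prim(G)\}.$$

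\emph{Fixed points in the \frattiniaction{}.} The point stabilisers of $G\act\prim(G)$ are exactly the maximal subgroups of $G$. Every proper subgroup of a finite group lies in some maximal subgroup. Therefore a subgroup $K\le G$ has a fixed point on $\prim(G)$ if and only if $K\neq G$, provided $G\neq\{1\}$. So the condition fails exactly when some multiset $m$ of total size $n$ on $\prim(G)$ is $G$-invariant. A multiset is $G$-invariant if and only if $m$ is constant on each orbit. Writing $c_i$ for its value on the orbit $O_i$, this means $n=\sum_i c_i|O_i|$ with $c_i\ge 0$. Thus the condition fails if and only if $n$ is such a sum, which is the claim (the connected components are the orbits).

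The main obstacle is the degenerate cases, where Lemma~\ref{LemmaConditionElementary} is stated for non-empty $Y$ and where ``proper subgroup lies in a maximal one'' degenerates.
\begin{itemize}
\item \emph{$n=0$.} By definition, $\Pol^{\emptyset}_G(\prim(G))$ is the set of $G$-fixed points of $\prim(G)$. This set is empty by Lemma~\ref{LemmaMinimalFixedPointFreeHomEquiv}, which includes $G=\{1\}$ via $\prim(\{1\})=\emptyset$. So the condition fails, matching the fact that $0$ is the empty sum.
\item \emph{$G=\{1\}$ and $n\ge1$.} Here $\prim(G)=\emptyset$, so $\Pol^{[n]}$ consists of the single empty map, which is a fixed point. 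Hence the condition holds. This matches the fact that a positive $n$ is not a sum over an empty set of components.
\end{itemize}
Beyond these checks, we would only verify that the multiset description of $\stab_G(\Sym(n).t)$ is correct. This is routine: $g$ maps the orbit $\Sym(n).t$ to itself exactly when $g.t$ is a rearrangement of $t$.
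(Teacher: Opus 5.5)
Your proof is correct. The paper gives no argument for this lemma; it only quotes it from Meyer--Starke. Your derivation from Lemma~\ref{LemmaConditionElementary} is the natural self-contained one, and it works: $\stab_G(\Sym(n).t)$ is the stabiliser of the multiset of entries of $t$, and only $G$ itself acts without a fixed point on $\prim(G)$. So the condition fails exactly when some $G$-invariant multiset of size $n$ exists, which means $n=\sum_i c_i|O_i|$ over the orbits $O_i$. Your separate treatment of $n=0$ and $G=\{1\}$ also agrees with the paper's conventions for the empty action.
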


\begin{example} We can easily get two examples from Lemma~\ref{LemmaSymCondition}.
    \begin{itemize}
        \item For the regular representation $\IZ/2\IZ \act \IZ/2\IZ$, we have $\Pol_{\IZ/2\IZ}(\IZ/2\IZ)\models \condition(\Sym(n)\act[n])$ if and only if $n$ is odd.
        \item The set $X=\prim(\IZ/10\IZ)$ is the disjoint union $\IZ/2\IZ \cup \IZ/5\IZ$ in the following sense: Let $X$ be the set $\{1,2,3,4,5,6,7\}$ with the group action of $\IZ/10\IZ$ such that $1\in \IZ/10\IZ$ maps 1 to 2 to 1 and 3 to 4 to 5 to 6 to 7 to 3. Then, 
        $$ 
            \Pol_{\IZ/10\IZ}(X)\not\models \condition(\Sym(n)\act[n])
        $$
        for all $n$, except $n=1$ and $n=3$. \qedhere
    \end{itemize}
\end{example}

Now, we can make the introduction a formal text.
Note that this correspondence in Definition~\ref{DefinitionMinorForGroups} induces two partial orders on group actions: 
\begin{enumerate}
    \item For two group actions $G\act X$ and $G'\act X'$, we have a notion that $G\act X\le G'\act X'$, if for every group action $H\act Y$, we have 
    $$
        \Pol_G(X)\models \condition(H\act Y) \implies \Pol_{G'}(X')\models \condition(H\act Y).
    $$
    \item For two group actions $H\act Y$ and $H'\act Y'$, we have a notion that $H\act Y\le H'\act Y'$, if for every group action $G\act X$, we have 
    $$
        \Pol_G(X)\models \condition(H\act Y) \implies \Pol_{G}(X)\models \condition(H'\act Y').
    $$
\end{enumerate}
We want to give elementary descriptions of these two preorders. 
The first will have an equivalent characterization as a primitive positive group construction and the second as elementary loop condition implication.

\subsection{Primitive positive group-constructions}
\label{SectionDefinePPGConst}
The notion of a primitive positive group-constructions is simplified concept similar to the notion of a primitive positive construction from universal algebra. 

\begin{definition}
    A group action $G\act X$ primitively positively group-constructs (ppg-constructs) a group action $H \act Y$, if starting with the action $G\act X$ and taking
    \begin{enumerate}
        \item a full power action followed by
        \item a subgroup action followed by
        \item a quotient group sub-action
    \end{enumerate}
    can result in a group action $H\act Y'$ of the group $H$, that differs from $H\act Y$ only by
    \begin{enumerate}[resume]
        \item a homomorphic equivalence.
    \end{enumerate}
    
    Two group actions are primitively positively group-interconstructible (ppg-interconstructible), if the ppg-construct each other.

    For a finite set of group actions $\{G_i\act X_i \mid i\in I\}$ we say that it ppg-constructs a group action $H\act Y$ if the external product action $\prod_{i\in I} G_i \act \prod_{i\in I} X_i$ does.
\end{definition}   

\begin{example}
    The regular group action $\IZ/2\IZ \act \IZ/2\IZ$ ppg-constructs the regular group action $\IZ/4\IZ \act \IZ/4\IZ$, as taking the second full power gives the action 
    $$
        \Sym(2) \ltimes(\IZ/2\IZ)^2 \act (\IZ/2\IZ)^2 
    $$
    which is also known as the action of $D_4$ on 4 points. Here, we find $\IZ/4\IZ$ as a subgroup of $D_4$ such that its action on $[4]$ is isomorphic to the regular action of $\IZ/4\IZ$.
\end{example}

\subsection{Minor Condition Implications}
\label{SectionDefineMCI}
The question which minor conditions imply each other can be very hard for general finite domain structures and general minor conditions. However, in the case of conditions coming from group actions there is a practical description. 

\begin{definition}
    A group action $H\act Y$ elementary implies a second group action $H'\act Y'$ as loop-conditions,
    if it can be obtained by the following steps:
    \begin{enumerate}
        \item Replace the group action $H\act Y$ by a star power $(H^{Y^{n-1}}\rtimes \dots \rtimes H^Y\rtimes H)\act Y^n$. 
        \item Then, take a subgroup action $H_2\act Y^n$.
        \item Then, replace the current group action $H_2\act Y^n$ by another group action $H_2\act Y'$ where there exists a $H_2$-set homomorphism $Y^n \to Y'$.
        \item Finally, take $H'\act Y'$ as a quotient group action of $H_2\act Y'$, where the kernel acts trivial.
    \end{enumerate}
    In this case, we write $(H\act Y)\ecle (H'\act Y')$.

    Two group actions are elementary equivalent as conditions, written $(H\act Y)\eceq (H'\act Y')$, if they elementary imply each other.

    We define that a finite set $\{H_i\act Y_i \mid i \in I\}$ of group actions elementary implies as loop-condition a group action $H'\act Y'$, if the external product action $\prod_{i\in I} H_i\act \prod_{i\in I}Y_i$ does. The empty product is defined as the action of the trivial group on a single point.
\end{definition}   

\subsection{Minors with the empty action}
Recall that we allow in addition to non-empty $G$-sets also the empty action $G\act \emptyset$.
We now clarify the previous definitions to this case.

We define for all $G$ that $G\act \emptyset$ and $\{1\}\act \emptyset$ are ppg-interconstructable as the group acts trivial. No subgroup of $G$ has a fixed point in this action, thus the minimal fixed-point free group is $\{1\}$. 
A non-empty action $G\act X$ ppg-constructs $\{1\}\act \emptyset$ if and only if it has no fixed point as in this case, $\emptyset$ can be constructed as the quotient group subaction of $G/G$ on the fixed-points of $G$. The action $\{1\}\act \emptyset$ ppg-constructs $G\act X$ if and only if $X$ has a fixed point: In this case, the $0$-th full power action of the empty action  $\{1\}\act \emptyset$ is the trivial action $\{1\}\act \{1\}$, which is ppg-interconstructable with $G\act X$.

As condition, we define $\condition(G\act \emptyset)$ to be the condition of having a fixed point. That is reasonable as a $0$-ary polymorphism of a $G$-set $X$ should be a map from $X^0$ to $X$.
Therefore, we get for a non-empty $G$-set $X$ that
\begin{align*}
    \Pol_G(X) &\models \condition(\{1\}\act \emptyset) \iff G\act X\text{ has a fixed point},
    \\
    \Pol_G (\emptyset) &\not\models \condition(\{1\}\act \emptyset),
    \\
    \Pol_G (\emptyset) &\models \condition(G\act X).
\end{align*}
where the second line refers to $\emptyset$ not having a fixed p
To understand $\Pol (\emptyset) \models \condition(G\act X)$, note that $\emptyset^X$ is still empty and thus $G$-invariant.

Finally, we get as elementary implications that all conditions of any group on the empty set are equivalent. Moreover, for any non-empty action $G\act X$, we get
\begin{align*}
    \condition(\{1\}\act \emptyset) &\ecle \condition (G \act X),
    \\
    \condition (G \act X) & \notecle \condition(\{1\}\act \emptyset)
\end{align*}
The first follows as $G\act \emptyset$ is a subgroup action and has a $G$-set homomorphism into $G \act X$. The second follows as an empty set cannot be obtained by a star power, a subgroup action, a homomorphism nor a quotient group action.

\begin{remark}
    Note that Lemma~\ref{LemmaConditionElementary}, Lemma~\ref{LemmaNoSelfLoop} and Lemma~\ref{LemmaSymCondition} are also valid for the empty action.
\end{remark}

\section{The classification of group actions up to ppg-constructions}
\label{SectionClassification}

We want to give a classification which group actions can be ppg-construct from another action, potentially of another group, and we want to know in which cases $\condition(G\act X)$ implies $\condition(H\act Y)$.

\subsection{Basic Facts}
We begin with some simple observations.

\subsubsection{ppg-constructions}
We start by proving three basic facts about ppg-constructions. For a more detailed discussion, we refer the reader to \cite{Wonderland}, in which similar lemmata are shown in the setting of pp-constructions.

\begin{lemma} \label{LemmaPPConstructionsPreserveMinorConditions}
    Let $G\act X$ and $H\act Y$ be two group actions, such that $G\act X$ ppg-constructs $H\act Y$. Then, $\Pol_H(Y)$ satisfies all loop conditions of $\Pol_{G}(X)$.
\end{lemma}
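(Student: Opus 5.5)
The plan is to show that each of the four steps in a ppg-construction preserves satisfaction of loop conditions, and then compose. We fix a condition $\condition(K\act Z)$ that $\Pol_G(X)$ satisfies. By Definition~\ref{DefinitionMinorForGroups}, this means there is a $G$-homomorphism $f\colon X^Z\to X$ with $f(t_k)=f(t)$ for all $k\in K$. For each step we build from $f$ a $K$-invariant $Z$-ary polymorphism of the new action. Working with the negation via Lemma~\ref{LemmaConditionElementary} is an alternative, but constructing polymorphisms directly seems cleaner. The case $Z=\emptyset$ (fixed points) and empty sets $X$ need a separate quick check at each step, using the conventions of Section~\ref{SectionEmptyAction}.

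\emph{Step 1: full power action.} Let $\Gamma=\Sym(n)\ltimes G^n$ act on $X^n$. Given $f$, define $F\colon (X^n)^Z\to X^n$ coordinatewise: view $t\in (X^n)^Z$ as an $n$-tuple $(t^{(1)},\dots,t^{(n)})$ with $t^{(i)}\in X^Z$, and set $F(t)=(f(t^{(1)}),\dots,f(t^{(n)}))$. Minors act on the $Z$-coordinate, so $F$ is $K$-invariant. Elements of $G^n$ act coordinatewise, so $F$ commutes with them because $f$ is a $G$-homomorphism. Permutations in $\Sym(n)$ permute the coordinates, and $F$ treats all coordinates uniformly, so $F$ commutes with them too. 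This uses the fact that the permutation of coordinates commutes with the precomposition by elements of $K$, which should be checked against the multiplication convention of the wreath product.

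\emph{Step 2: subgroup action.} If $\varphi\colon H\to G$ is any homomorphism, every $G$-homomorphism $X^Z\to X$ is also an $H$-homomorphism, so $f$ itself works.

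\emph{Step 3: quotient group sub-action.} Let $N\trianglelefteq G$ and consider $G/N$ acting on $\fix_X(N)$. If $t\in \fix_X(N)^Z$, then $t$ is $N$-fixed in $X^Z$, so $f(t)$ is $N$-fixed. Hence $f$ restricts to a map $\fix_X(N)^Z\to\fix_X(N)$. This restriction is $G/N$-equivariant and remains $K$-invariant.

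\emph{Step 4: homomorphic equivalence.} Given $H$-homomorphisms $a\colon Y\to Y'$ and $b\colon Y'\to Y$, and a $K$-invariant $f\in\Pol^Z_H(Y)$, set $f'(t)=a(f(b\circ t))$. Composition with $b$ commutes with precomposition by $K$ and is $H$-equivariant, so $f'$ is a $K$-invariant polymorphism of $Y'$.

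Composing the four steps gives the lemma. The main obstacle is Step 1: one has to verify the equivariance of $F$ under the wreath product carefully with the paper's convention $(h,f).t=(f.t)_{h^{-1}}$. The remaining steps, including the edge cases with empty sets and $Z=\emptyset$, are routine; for example, $X^\emptyset$ is a point, and the condition is the existence of a fixed point, which Steps 1–4 visibly preserve.
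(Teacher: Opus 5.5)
Your proposal is correct and takes the same approach as the paper: the paper asserts that each of the four steps of a ppg-construction preserves loop conditions and omits the verification, and your step-by-step construction of $K$-invariant polymorphisms supplies exactly those details, including the empty-set and $Z=\emptyset$ cases. The concern you flag in Step 1 does not arise: your coordinatewise $F$ commutes with every coordinate permutation and with every coordinatewise $G$-action individually, so it is equivariant under each map $t\mapsto (g.t)_{h^{-1}}$, whatever the wreath-product multiplication convention.
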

\begin{proof}
    This is a straight forward check of conditions: A ppg-construction consists of four steps, each of them clearly respects loop conditions. 
    We omit the details.
\end{proof}
\begin{lemma} \label{LemmaPPConstructionsTransitive}
    Let $G\act X$, $(G'_i\act X'_i)_{i\in I}$ and $G''\act X''$ be group actions where $I$ is a finite set. Assume that $G\act X$ ppg-constructs $G'_i\act X_i$ for all $i\in I$ and $\{G'_i\act X'_i\mid i \in I\}$ ppg-constructs $G''\act X''$. Then also $G\act X$ ppg-constructs $G''\act X''$.\end{lemma}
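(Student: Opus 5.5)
The plan is to argue directly on the four-step description of a ppg-construction. The idea is to show that the class of actions ppg-constructible from $G\act X$ is closed under the operations that make up a ppg-construction from $\{G'_i\act X'_i\}$. Those operations are: forming external products, full powers, subgroup actions, quotient group sub-actions, and replacing by a homomorphically equivalent action. I would then bring every composite construction into the normal form "full power, then subgroup, then quotient sub-action, then homomorphic equivalence". The model for this is the proof of transitivity of pp-constructions in \cite{Wonderland}.

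\textbf{Step 1: normalising each individual construction.} For each $i$, write the construction of $G'_i\act X'_i$ as
\[
\Sym(n_i)\ltimes G^{n_i}\act X^{n_i},\qquad S_i\le \Sym(n_i)\ltimes G^{n_i},\qquad N_i\normalsubgroup S_i,
\]
with $S_i/N_i\iso G'_i$ acting on $\fix_{X^{n_i}}(N_i)=:Z_i$, and $Z_i$ homomorphically equivalent to $X'_i$.

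\textbf{Step 2: combining the constructions for all $i$.} Set $n=\sum_i n_i$. Then $\prod_i(\Sym(n_i)\ltimes G^{n_i})$ embeds in $\Sym(n)\ltimes G^n$ with the product action on $X^n=\prod_i X^{n_i}$. So $\prod_i S_i$ is a subgroup of the $n$-th full power, and $\prod_i N_i$ is normal in it. Its fixed points are $\prod_i Z_i$, with the quotient $\prod_i G'_i$ acting. Hence $G\act X$ ppg-constructs $\prod_i G'_i\act \prod_i Z_i$. This action is homomorphically equivalent to $\prod_i G'_i\act\prod_i X'_i$, since products of equivariant maps are equivariant.

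\textbf{Step 3: commutation lemmas.} I would prove the following, each of which is a routine verification.
\begin{enumerate}
\item[(a)] A full power of a quotient sub-action of a subgroup action of a full power of $G\act X$ is again of that form. The $m$-th full power of $S/N\act \fix_{X^n}(N)$ is realised inside $\Sym(nm)\ltimes G^{nm}$. One takes the subgroup generated by $\Sym(m)$, permuting blocks, together with $S^m$, and the normal subgroup $N^m$. Since $\fix_{X^{nm}}(N^m)=\fix(N)^m$, the quotient action agrees with the full power.
\item[(b)] A subgroup action of $S/N$ is $S'/N$ for some $N\le S'\le S$.
\item[(c)] A quotient sub-action of a quotient sub-action is a quotient sub-action. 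This uses the preimage of the normal subgroup together with the equality $\fix(N')=\fix(\text{preimage})$.
\end{enumerate}

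\textbf{Step 4: the main obstacle, homomorphic equivalence in the middle of a construction.} The hard part is that the construction from $\prod_i X'_i$ starts from an action that is only homomorphically equivalent to $\prod_i Z_i$. I would show that each of the three operations preserves homomorphic equivalence.
\begin{itemize}
\item \emph{Full power.} Given equivariant maps $f\colon A\to B$ and $g\colon B\to A$, the coordinatewise maps $f^m$ and $g^m$ are equivariant for $\Sym(m)\ltimes H^m$.
\item \emph{Subgroup action.} This is immediate.
\item \emph{Quotient sub-action.} Equivariant maps send $N$-fixed points to $N$-fixed points, and they commute with the induced $S/N$-action.
\end{itemize}
Thus the construction applied to $\prod_i Z_i$ yields an action homomorphically equivalent to $G''\act X''$. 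Together with Step 3, which collapses the two stacked constructions into one normal form, this completes the argument. I expect the bookkeeping in 3(a) to be the most delicate point: in particular, that the embedded block-permuting group normalises $N^m$ and acts correctly on $\fix(N)^m$.
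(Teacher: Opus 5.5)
Your proposal is correct and follows essentially the paper's argument. The paper also reduces the general case to a single construction by running the individual constructions in parallel on a product (your Step~2). For a single construction it only asserts that the steps can be reordered into the normal form and repeated steps merged, which your Steps~3 and~4 carry out in detail.

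One point needs adjusting. In the paper, subgroup actions are taken along arbitrary, possibly non-injective homomorphisms; this is needed, e.g., to obtain $H\act\{*\}$ from the $0$-th full power. So in Step~1 you should allow homomorphisms $S_i\to\Sym(n_i)\ltimes G^{n_i}$ rather than subgroups. In Step~3(b), a subgroup action along $\psi\colon K\to S/N$ is then realised via the fibre product $\{(k,s)\in K\times S\mid \psi(k)=sN\}$ with normal subgroup $\{1\}\times N$, rather than by an intermediate subgroup $N\le S'\le S$.
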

\begin{proof}
    First, consider the case where $I$ contains only a single element. In this case, it is a straight forward check of conditions: A ppg-construction followed by another ppg-construction is just an application of the four steps of this constructions in a different order and quantity. However, they can always be reordered to this specific order. Moreover, applying the same step twice can always be replaced by a single application of this step. We omit the details.

    For the general case, note that we can replace the set in the middle by the single external product action $G'\act X'$, as $G\act X$ ppg-constructs all of $G'_i\act X'_i$ only if it ppg-constructs their external product action: 
    We know by assumption that there exist group actions 
    $G_{i,1}\act X_{i,1}, G_{i,2}\act X_{i,1}$ and $G'_{i}\act X_{i,3}$ for all $i\in I$
    such that $G_{i,1}\act X_{i,1}$ is a full power of $G\act X$, $G_{i,2}\act X_{i,1}$ is a subgroup action of $G_{i,1}\act X_{i,1}$, $G'_{i}\act X_{i,3}$ is a quotient group sub-action of $G_{i,2}\act X_{i,1}$ and $G'_i\act X'_i$ is homomorph equivalent to $G'_{i}\act X_{i,3}$. Now, clearly 
    $$
        \prod_{i\in I} G_{i,1} \act \prod_{i\in I} X_{i,1}
    $$
    is a full power of $G\act X$ and has 
    $
        \prod_{i\in I} G_{i,2} \act \prod_{i\in I} X_{i,1}
    $
    as subgroup action which moreover has the quotient group sub-action
    $
        \prod_{i\in I} G'_{i} \act \prod_{i\in I} X_{i,3}
    $
    that is homomorphically equivalent to the product
    $$
        \prod_{i\in I} G'_i \act \prod_{i\in I} X'_i
    $$
    as desired. 
    
    Thus, we may apply the case where $I$ contains just a single element.
\end{proof}
\begin{lemma} \label{LemmaPPConstructMinorAction}
    Let $G\act X$ and $H\act Y$ be two group actions. Then, the action $G\act X$ ppg-constructs the minor action $H\act \Pol_G^Y(X)$.
\end{lemma}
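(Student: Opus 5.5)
Set $n=|X^Y|$, write $T=X^Y$, and start from the $T$-th full power action of $G\act X$. Its points are maps $s\colon T\to X$, and an element $(\sigma,f)\in\Sym(T)\ltimes G^T$ acts by $(\sigma,f).s=(f.s)_{\sigma^{-1}}$. A $Y$-ary polymorphism $p\in\Pol^Y_G(X)=\Hom_G(X^Y,X)$ is itself a map $T\to X$, so $\Pol^Y_G(X)$ is naturally a subset of $X^T$. We therefore want a subgroup whose fixed points under a suitable normal subgroup are exactly the $G$-equivariant maps, and whose quotient acts on them as the minor action of $H$.

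The subgroup is built as follows. Let $G$ act on $T$ by post-composition, and $H$ act on $T$ by pre-composition $t\mapsto t_h$ (a right action, so we use $t\mapsto t_{h^{-1}}$ as needed). These two actions commute. For $g\in G$ consider the element $a_g=(\sigma_g,c_g)$, where $\sigma_g$ is the permutation $t\mapsto g.t$ of $T$ and $c_g$ is the constant map with value $g$. Then $(a_g.s)(t)=g.s(g^{-1}.t)$, up to the convention for $\sigma$ versus $\sigma^{-1}$. Hence $s$ is fixed by all $a_g$ exactly when $s$ is $G$-equivariant. For $h\in H$ let $b_h=(\tau_h,1)$, where $\tau_h$ is the permutation of $T$ induced by pre-composition. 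Then $b_h.s$ is the minor $\minoraction{h}{s}$, again up to inversion conventions. Because the $G$- and $H$-actions on $T$ commute, the elements $a_g$ and $b_h$ commute inside the wreath product: conjugating the constant function $c_g$ by $\tau_h$ leaves it constant. So $K=\{a_g\}\cong G$ and $L=\{b_h\}\cong H$ generate a subgroup $S=K\times L$, possibly after we check that $K\cap L$ is trivial or handle it as below.

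Next take the subgroup action of $S$ (or of the external product $G\times H$ via the homomorphism $(g,h)\mapsto a_gb_h$, which the paper allows since subgroup actions need not be injective). Then take the quotient group sub-action for the surjection $G\times H\to H$ with kernel $N=G\times\{1\}$. The fixed points of $N$ are exactly $\Hom_G(X^Y,X)=\Pol^Y_G(X)$, and $H$ acts on them by $b_h$, i.e.\ by the minor action. The result is isomorphic, hence homomorphically equivalent, to $H\act\Pol^Y_G(X)$.

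The main obstacle is bookkeeping: getting the left/right conventions in the wreath multiplication $(h,f)(h',f')=(hh',f_{h'}f')$ right so that $g\mapsto a_g$ and $h\mapsto b_h$ are homomorphisms, possibly after replacing $h$ by $h^{-1}$ since the minor action is a right action, and so that the two images commute. The degenerate cases also need separate treatment. If $Y=\emptyset$, then $T$ is a point and the zero-ary polymorphisms are the fixed points; the same recipe with $H$ acting trivially works. If $X=\emptyset$, both sides are handled by the paper's conventions on the empty action.
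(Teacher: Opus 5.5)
Your proposal is correct and is essentially the paper's argument: take the full power indexed by $T=X^Y$, let $G$ act by $s\mapsto (t\mapsto g.s(g^{-1}.t))$ and $H$ by pre-composition, pass to the $G$-fixed points (exactly $\Pol^Y_G(X)$), and let $H$ act on them by minors. Your explicit homomorphism $G\times H\to\Sym(T)\ltimes G^T$ with commuting images, followed by the quotient group sub-action along $G\times H\to H$, follows the prescribed order of steps more faithfully than the paper's phrasing; one small correction: the minor action $\minoraction{h}{f}=(t\mapsto f(t_h))$ is a left action, not a right one, so the right choice is $h\mapsto(\tau_{h^{-1}},1)$ with $\tau_h(t)=t_h$.
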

\begin{proof}
    We have by definition $G\act X \ppgle \Sym(X^Y) \ltimes G^{X^Y} \act X^{X^Y}$, the full power action. Note that we have on $X^{X^Y}$ an action of $H$ by 
    $$
        \forall h\in H, f\in X^{X^Y}, t\in X^Y:
        (h.f)(t)=f(t_h)  
    $$
    Moreover, we have a conjugation action of $G$ by 
    $$
        \forall g\in G, f\in X^{X^Y}, t\in X^Y: (g.f)(t)=g.(f(g^{-1}.t))
    $$
    and those two actions are independent in the sense that the $H$-action on a $G$-fixed-point is again a $G$-fixed-point as the first action acts only on $Y$ and the second one on the two $X$ of $X^{X^Y}$.
    Both of these actions are subactions of the $\Sym(X^Y) \ltimes G^{X^Y}$-action, thus we get homomorphisms $\alpha\colon G \to \Sym(X^Y) \ltimes G^{X^Y}$ and $\beta\colon H \to \Sym(X^Y) \ltimes G^{X^Y}$ inducing those actions. Now, $\Sym(X^Y) \ltimes G^{X^Y} \act X^{X^Y}$ ppg-constructs $H \act_{\beta} \fix_{\alpha(G)}(X^{X^Y})$ as this is a subgroup action of a quotient group subaction. However, the set $\fix_{\alpha(G)}(X^{X^Y})$ are exactly those elements in $X^{X^Y}$ that are compatible with $G$ in the sense that they are polymorphisms $X^Y\to X$ and the action of $H$ is exactly the minor action.
\end{proof}

\subsubsection{loop conditions}
As for ppg-constructions, we want to give three basic lemmata for elementary implications of loop conditions.

Elementary implications of loop conditions compose in the following sense:
\begin{lemma}
    Let $H\act Y$, ($H'_i\act Y'_i)_{i\in I}, H''\act Y''$ be group actions where $I$ is a finite set.
    Assume that for all $i\in I$, $(H\act Y)\ecle (H'_i\act Y'_i)$ and that $\{H'_i\act Y'_i\mid i\in I\}\ecle (H''\act X'')$. Then, we also get $(H\act Y)\ecle (H''\act X'')$.
\end{lemma}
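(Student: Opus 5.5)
The plan mirrors the proof of Lemma~\ref{LemmaPPConstructionsTransitive}. First I reduce to the case where $I$ has a single element, and then I show that two elementary implications compose into one.

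\emph{Reduction to a single intermediate action.} For each $i\in I$ we have a star power $H_{i,1}\act Y^{n_i}$ of $H\act Y$, a subgroup $H_{i,2}\le H_{i,1}$, an $H_{i,2}$-set homomorphism $Y^{n_i}\to Y'_i$, and a quotient $H_{i,2}\to H'_i$ whose kernel acts trivially on $Y'_i$. Put $n=\max_i n_i$. Projecting $Y^n\to Y^{n_i}$ onto the first $n_i$ coordinates is compatible with the natural epimorphism from the $n$-th star power group onto the $n_i$-th one. So we may replace $H_{i,2}$ by its preimage and the homomorphism by its composite with the projection, and assume all $n_i=n$. Next, the diagonal map $Y^n\to\prod_{i}Y^n$ should be equivariant for a suitable homomorphism into the star power group. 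Here one uses the subgroup $\prod_i H_{i,2}$ of the star power of the product of the star powers; the cleanest route is to observe that the star power of $H\act Y$ to exponent $n|I|$ contains, as a subgroup acting on $Y^{n|I|}\cong\prod_i Y^n$, the product of the star powers acting coordinatewise in blocks. Then $\prod_i H_{i,2}$ is a subgroup, the product of the homomorphisms gives $\prod_i Y^n\to\prod_i Y'_i$, and the product of the quotients gives $\prod_i H'_i$ with kernel acting trivially. Hence $H\act Y \ecle \prod_i H'_i\act\prod_i Y'_i$, and we may assume $|I|=1$.

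\emph{Composition of two steps.} Suppose $H\act Y\ecle H'\act Y'$ via $(n,H_2,\varphi\colon Y^n\to Y',\pi\colon H_2\to H')$, and $H'\act Y'\ecle H''\act Y''$ via $(m,H'_2,\psi\colon Y'^m\to Y'',\pi'\colon H'_2\to H'')$. Step by step, the plan is:
\begin{enumerate}
\item Iterate $\varphi$ coordinatewise in star fashion to obtain a map $\Phi\colon Y^{nm}\cong (Y^n)^m\to Y'^m$.
\item Build a group $K$ of elements of the $nm$-th star power of $H$ that lie, blockwise, over elements of $H'_2$. More precisely, an element $h'\in H'_2\le H'^{Y'^{m-1}}\rtimes\dots\rtimes H'$ has components $h'_s\in H'$ indexed by tuples $s$ in $Y'^{k}$. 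We take elements of the $nm$-th star power of $H$ whose $k$-th block component at each $t\in (Y^n)^k$ lies in $H_2$ and maps under $\pi$ to $h'_{\Phi(t)}$.
\item Check that $K$ is a subgroup and that $\Phi$ is $K$-equivariant with respect to the induced map $K\to H'_2$. This uses the $H_2$-equivariance of $\varphi$ and the fact that the kernel of $\pi$ acts trivially on $Y'$.
\item Take $\psi\circ\Phi$ as the homomorphism and the composite $K\to H'_2\to H''$ as the quotient. Its kernel acts trivially on $Y''$, since the elements of the kernel act on $Y'^m$ through elements of $\ker\pi'$.
\end{enumerate}

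The step I expect to be the main obstacle is making the map $K\to H'_2$ well defined and surjective. Surjectivity is needed for $H''$ to appear as a quotient group action. Well-definedness is the subtle part: several $t$ may have the same image $\Phi(t)$, and all of them must carry compatible lifts. The definition of $K$ forces this compatibility, because we demand $\pi(\cdot)=h'_{\Phi(t)}$ for every $t$. Surjectivity then follows by choosing a preimage in $H_2$ independently for each $t$, which is possible since $\pi$ is onto. The remaining checks are routine but notationally heavy, so I would present them with the star product written as iterated wreath products and omit the bookkeeping, as the paper does for the ppg-case.
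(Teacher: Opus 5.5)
Your strategy matches the paper's. The paper's proof only refers back to Lemma~\ref{LemmaPPConstructionsTransitive}: pass to a single intermediate action via external products, then compose two elementary implications. Your reduction to $|I|=1$ via block-wise star powers is sound.

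The composition step, however, breaks down exactly where you flagged trouble, for a reason you did not consider: $\varphi$ need not be surjective. If $s\in Y'^k$ is not of the form $\Phi(t)$, your condition on $g$ says nothing about $h'_s$. So $g$ does not determine $h'$, the map $K\to H'_2$ is not defined, and $K$ may be too small to map onto $H'_2$ at all.

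A concrete instance: take $H\act Y=\IZ/2\IZ\act\IZ/2\IZ$, $n=1$, $H_2=H'=H$, $\pi=\id$, and $Y'=\IZ/2\IZ\sqcup\IZ/2\IZ$ with $\varphi$ the inclusion of the first copy. Then take $m=2$, let $H'_2=H''$ be the whole star square $H'\ltimes H'^{Y'}$ (of order $2\cdot 2^4=32$), $Y''=Y'^2$ and $\psi=\pi'=\id$. Your $K$ sits inside the star square of $\IZ/2\IZ\act\IZ/2\IZ$, which has order $8$, so no epimorphism $K\to H''$ exists.

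The repair is to let $K$ remember $h'$. Write $\mathrm{St}_{nm}$ for the $nm$-th star power group of $H\act Y$ and $g_t$ for the block component of $g$ at $t$. Take the fibre product $K=\{(g,h')\in \mathrm{St}_{nm}\times H'_2\mid g_t\in H_2 \text{ and } \pi(g_t)=h'_{\Phi(t)} \text{ for all } k<m,\ t\in (Y^n)^k\}$. Use the projection $K\to \mathrm{St}_{nm}$ as the homomorphism in the subgroup-action step. This projection is in general not injective, which the paper explicitly permits (``we do not require $f$ to be injective''). With this $K$ the remaining checks go through:
\begin{itemize}
\item Closure of $K$ under multiplication follows from the identity $\Phi(g.t)=h'.\Phi(t)$ on the earlier blocks, which is where the $H_2$-equivariance of $\varphi$ enters.
\item The projection $K\to H'_2$ is surjective by your independent-lift argument.
\item $\psi\circ\Phi$ is $K$-equivariant.
\item The kernel of $K\to H'_2\to H''$ acts trivially on $Y''$.
\end{itemize}
The same pullback device is needed in two further places whenever the given implications use non-injective homomorphisms into the star powers. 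It should replace ``take the preimage of $H_{i,2}$'' in your reduction step, and the assumption $H_2\le\mathrm{St}_n$ (and likewise for $H'_2$) in the composition.
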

\begin{proof}
    The proof is similar to Lemma~\ref{LemmaPPConstructionsTransitive}.
\end{proof}

Elementary implications are indeed implications in the sense of loop conditions:
\begin{lemma}   \label{LemmaElImplImplyImply}
    Let $G\act X$, $H\act Y$ and $H'\act Y'$ be group actions such that $\Pol_G(H)\models \condition(H\act Y)$ and $(H\act Y)\ecle(H'\act Y')$. Then, $\Pol_G(H)\models \condition(H'\act Y')$.
\end{lemma}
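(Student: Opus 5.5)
The plan is to show that each of the four steps in an elementary implication preserves satisfaction of the loop condition in $\Pol_G(X)$; here $\Pol_G(H)$ in the statement is read as $\Pol_G(X)$. Composing the four steps then gives the lemma. Throughout, satisfaction of $\condition(H\act Y)$ means that the $H$-action on $\Pol^Y_G(X)=\Hom_G(X^Y,X)$ by minors has a fixed point. Call such a fixed point an $H$-symmetric polymorphism: a $G$-homomorphism $f\colon X^Y\to X$ with $f(t_h)=f(t)$ for all $h\in H$ and $t\in X^Y$. The empty cases ($Y=\emptyset$, where polymorphisms are $G$-fixed points of $X$) are checked separately but fit the same argument.

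\emph{Step 1 (star power).} Given an $H$-symmetric $f\in\Pol^Y_G(X)$, build an $n$-fold composite $F\colon X^{Y^n}\to X$ by recursion. For $n=1$ take $F=f$. Given $F_{n-1}$ on $X^{Y^{n-1}}$, set $F_n(t)=f\bigl(y\mapsto F_{n-1}(t(y,\cdot))\bigr)$. This is a $G$-homomorphism because $f$ and $F_{n-1}$ are. An element of the star power group is a tuple consisting of $h_0\in H$ acting on the first coordinate and functions $Y^{k}\to H$ acting on the $(k+1)$-st coordinate depending on the earlier coordinates. Invariance of $F_n$ under such an element follows by induction: the inner functions are invariant under the coordinate-dependent permutations of the later coordinates, by the inductive hypothesis applied fibrewise to each fixed $y$, and the outer $f$ absorbs $h_0$. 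Getting the left/right conventions of the semidirect product consistent with the pre-composition action is the only delicate part. I expect this bookkeeping to be the main obstacle, though it is routine.

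\emph{Step 2 (subgroup action).} This step is immediate. If $\varphi\colon H_2\to H^{Y^{n-1}}\rtimes\dots\rtimes H$ is a homomorphism and $F$ is fixed by the whole group, then $F$ is fixed by $\varphi(H_2)$, hence by $H_2$ under the induced action.

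\emph{Step 3 (homomorphism $Y^n\to Y'$).} Let $\psi\colon Y^n\to Y'$ be an $H_2$-set homomorphism and $F$ an $H_2$-symmetric $Y^n$-ary polymorphism. Define $F'(s)=F(s\circ\psi)$ for $s\in X^{Y'}$; this is the minor of $F$ along $\psi$. It is a $G$-homomorphism since $G$ acts pointwise. For $h\in H_2$ we have $s_h\circ\psi=s\circ h\circ\psi=s\circ\psi\circ h=(s\circ\psi)_h$. Hence $F'(s_h)=F((s\circ\psi)_h)=F(s\circ\psi)=F'(s)$, so $F'$ is $H_2$-symmetric. When $Y'$ is empty, $Y^n$ must also be empty, so no new case arises.

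\emph{Step 4 (quotient).} The kernel acts trivially on $Y'$, so the minor action of $H_2$ on $\Pol^{Y'}_G(X)$ factors through $H'$. Consequently, $H_2$-fixed points and $H'$-fixed points coincide.

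Chaining Steps 1–4 yields an $H'$-symmetric polymorphism of arity $Y'$, i.e.\ $\Pol_G(X)\models\condition(H'\act Y')$.
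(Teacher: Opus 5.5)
Your proposal is correct and follows the same route as the paper: the $n$-fold composite of $f$ gives invariance under the star power, restriction along $H_2\to H^{Y^{n-1}}\rtimes\dots\rtimes H$ gives the subgroup step, and pre-composition with the induced map $X^{Y'}\to X^{Y^n}$ gives the homomorphism step. The quotient step is then immediate because the kernel acts trivially on $Y'$. Your fibrewise induction for the star-power invariance spells out a verification that the paper only asserts, and it is correct as you sketched it.
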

\begin{proof}
    Let $H_1\act Y^n$ be the $n$-th $*$-power of $H\act Y$, $H_2\to H_1$ a homomorphism, $Y_n\to Y'$ an $H_2$-set homomorphism and $H_2\to H'$ an epimorphism whose kernel acts trivial on $Y'$. 
    These object exist by the definition of an elementary implication.
    Let $f\in \Pol_Y(X)$ be invariant under $H$. Then, the $n$-folded composition 
    $$
        f_1\coloneqq f\circ(f,\dots,f)\circ (f,\dots,f,\dots,f,\dots,f)\circ \dots \colon X^{Y^n} \to X
    $$
    considered as $Y^n$-ary map satisfies the $*$-power condition $\condition(H_1\act Y^n)$. Clearly, $f_1$ also satisfies $\condition(H_2\act Y^n)$, as being invariant under $H_1$ implies being invariant under $H_2$. Now, the $H_2$-set homomorphism $Y^n\to Y'$ induces a $G$-set homomorphism $G^{Y'} \to G^{Y^n}$. Pre-composing $f_1$ with it defines a polymorphism $f_2\colon G^{Y'}\to G$, that is invariant under $H_2\act Y'$. As these are the same conditions on permutations, it is also invariant under $H'\act Y'$.
\end{proof}

\begin{lemma}   \label{LemmaConditionsCompose}
    Let $G\act X$ and $G\act Y$ be two group actions. Then, 
    $$\{G \act Y\}\cup \{H \act X \mid H\le G \text{ stabilizer subgroup of a point }y\in Y\} \ecle G\act X$$
    holds.
\end{lemma}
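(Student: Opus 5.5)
The plan is to realize $G\act X$ directly from the external product via a star square, with a cocycle-twisted copy of $G$ as the subgroup, and then to check the four steps in the definition of $\ecle$. By definition the left-hand side means the external product action. To keep it finite, I first replace the (possibly infinite up to conjugacy) family of stabilizers by one representative per orbit. Let $y_1,\dots,y_k$ be representatives of the $G$-orbits on $Y$ and $H_i=\stab_G(y_i)$. Since stabilizers of points in one orbit are conjugate and conjugation gives isomorphic actions $H\act X$, it suffices to show that the external product action
$$
P\coloneqq G\times \prod_{i=1}^k H_i \act Z\coloneqq Y\times X^k
$$
elementary implies $G\act X$. Extra factors for further stabilizers can be discarded afterwards, by a projection homomorphism and the quotient step.

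For the twisting, I fix for every $y\in Y$ in the orbit of $y_i$ a coset representative $c_y\in G$ with $c_y.y_i=y$. For $g\in G$ and $y$ in orbit $i$ this yields the cocycle
$$
h(g,y)\coloneqq c_{g.y}^{-1}\, g\, c_y\in H_i .
$$
It satisfies the cocycle identity $h(gg',y)=h(g,g'.y)\,h(g',y)$ and the key relation $g\,c_y=c_{g.y}\,h(g,y)$.

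The construction then runs through the four steps.
\begin{enumerate}
\item Take the star square ($n=2$) of $P\act Z$. This is the group $P\ltimes P^{Z}$ acting on $Z^2$ by $(p,f).(z,z')=(p.z,f_z.z')$.
\item Embed $G$ as a subgroup $H_2$ via $g\mapsto (p_g,f_g)$. Here $p_g=(g,1,\dots,1)$ acts on the $Y$-coordinate only. The function $f_g\colon Z\to P$ sends $z=(y,x)$, with $y$ in orbit $i$, to the element of $P$ that is $h(g,y)$ in the $H_i$-slot and trivial elsewhere. The cocycle identity is exactly what makes $g\mapsto(p_g,f_g)$ a group homomorphism into the star product; this is a routine check against the multiplication rule $(h,f)(h',f')=(hh',f_{h'}f')$.
\item Define the candidate $H_2$-set homomorphism
$$
\varphi\colon Z^2\to X,\qquad \bigl((y,x),(y',x')\bigr)\mapsto c_y.\,x'_{i(y)},
$$
where $i(y)$ is the orbit index of $y$. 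For equivariance, $g$ sends the pair to $\bigl((g.y,x),(y',x'')\bigr)$, where $x''$ agrees with $x'$ except that its $i(y)$-th coordinate is $h(g,y).x'_{i(y)}$. Its image is therefore $c_{g.y}h(g,y).x'_{i(y)}=g\,c_y.x'_{i(y)}=g.\varphi(\dots)$, as required.
\item The resulting action of $H_2\cong G$ on $X$ is the original one, so the final quotient step can be taken with trivial kernel.
\end{enumerate}

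I expect the main obstacle to be the bookkeeping of conventions rather than any conceptual step. Three points need care: whether the star product uses $f_z$ or $f_{p.z}$ in the second coordinate, whether $P^Z$ carries a left or right twist, and the left/right issue noted for pre-composition actions. Depending on these conventions, $h(g,y)$ may need to be replaced by $h(g^{-1},g.y)^{-1}$, or the roles of $z$ and $z'$ swapped. I would fix the convention by checking the homomorphism property in step 2 against the stated multiplication rule and adjust the cocycle accordingly. Semantically, the construction is the composition $t\mapsto f\bigl((c_y\circ g_{i(y)}(t_{c_y}))_{y\in Y}\bigr)$ of a $G$-invariant $Y$-ary polymorphism $f$ with $H_i$-invariant $X$-ary polymorphisms $g_i$. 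This gives a sanity check via Lemma~\ref{LemmaElImplImplyImply}.
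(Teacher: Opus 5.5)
Your proposal is correct and is essentially the paper's argument: your cocycle $h(g,y)=c_{g.y}^{-1}gc_y$ is the paper's $r(g,y)=J(g.y)^{-1}gJ(y)$, used the same way to embed a twisted copy of $G$ into a star product, and your $\varphi$ is the homomorphism $(y,x)\mapsto c_y.x$, which the paper describes only loosely. The one difference is packaging: you work in a single star square of the external product instead of the paper's chain of intermediate elementary implications, and with the paper's conventions, $(p,f).(z,z')=(p.z,f_z.z')$ and $(h,f)(h',f')=(hh',f_{h'}f')$, your homomorphism and equivariance checks go through exactly as written, so no adjustment of the cocycle is needed.
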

\begin{proof}
    At first, we simplify and replace the conditions 
    $$
     \{G \act Y\}\cup \{H \act X \mid H\le G \text{ stabilizer subgroup of a point }y\in Y\}
    $$
    by a single condition
    $$
        G \ltimes (\prod_{y \in Y} H_{i(y)} ) \act  Y \times X
    $$
    where $H_y$ is the stabilizer subgroup of $G$ at $y\in Y$,  $i(y)\in Y$ is a representative of the connected component of $y$ and $I\subseteq Y$ will be the subset containing all representatives. 
    
    The action is defined in the following way: The group $H_{i(y)}$ acts trivial on $Y$. In $X$ direction, it acts nontrivial only on those points in $X \times Y$  where the $Y$-coordinate equals $y$. On these points, it acts on $X$ by the subgroup action of $G\act X$. The factor group $G$ acts trivial in $X$-direction and by the given $G\act Y$-action on the $Y$-coordinates. Note that the semi-direct product here is well-defined, as every $G$-orbit of the $G$-action on $Y$ exchanges only isomorphic factors of the product $\prod_{y \in Y} H_{i(y)}$. Note that the $G$-set $Y$ is isomorphic to $\coprod_{i\in I} G/H_i$. 
    
    To get this simplification, note that
    \begin{align}
        &   \label{eqCompose1}
        \{G \act Y\}\cup \{H \act X \mid H\le G \text{ stabilizer subgroup of a point }y\in Y\}
        \\ \ecle{}& \label{eqCompose2}
        G \times \prod_{y\in Y} H_y \act Y \times \prod_{y\in Y} X
        \\ \ecle{}& \label{eqCompose3}
        (G \times \prod_{y\in Y} H) \ltimes (G \times \prod_{y\in Y} H_y)^{Y \times \prod_{y\in Y} X} \act (Y \times \prod_{y\in Y} X) \times (Y \times \prod_{y\in Y} X)
        \\ \ecle{}& \label{eqCompose4a}
        G \ltimes (\prod_{i\in I} H_i)^{Y} \act (Y \times \prod_{y\in Y} X) \times (Y \times \prod_{y\in Y} X)
        \\ \ecle{}& \label{eqCompose5}
        G \ltimes (\prod_{i\in I} H_i)^{Y} \act (Y) \times (\prod_{i\in I} X)
        \\ ={}& \label{eqCompose6}
        G \ltimes (\prod_{y \in Y} \prod_{i\in I} H_i) \act  Y \times (\prod_{i\in I} X) 
        \\ \ecle{}& \label{eqCompose7}
        G \ltimes (\prod_{y \in Y} H_{i(y)} ) \act  Y \times X
        \\ ={}& \label{eqCompose7a}
        G \ltimes (\prod_{y \in Y} H_{i(y)} ) \act  \coprod_{i\in I} G/H_i \times X
    \end{align}
    where all actions are the canonical ones. 
    In more details, we begin with the definition to get action \eqref{eqCompose2}. Taking a star product gives action \eqref{eqCompose3}. Then, we take the subgroup to get \eqref{eqCompose4a}. This action is just a disjoint union of \eqref{eqCompose5} respectively \eqref{eqCompose6}. A similar subgroup action gives a disjoint union of action \eqref{eqCompose7} which we thus get by a homomorphic equivalence and which can be rewritten as \eqref{eqCompose7a}.
    
    In the next step, we want to find a group homomorphism $\alpha'\colon G\to G\ltimes (\prod_{i\in I} H_i^{G/H_i}) $ such that the subgroup action
    \begin{align}
        \label{eqCompose8}
        G \act_\alpha  \coprod_{i\in I} G/H_i \times X
    \end{align}
    is isomorphic to a disjoint union of the $G$-actions on $X$.
    We cannot take the usual embedding of $G$ into the semi-direct product, as this would be precisely the trivial action on $X$. 

    Recall that we identified $Y$ with $\coprod_{i\in I} G/H_i$. For $y\in Y$ choose $J(y)\in G$ as a preimage of $y$ under the composition $G\to G/H_{i(y)} \to Y$.
    For simplicity let $r(g,y)=(J(g.y))^{-1}\cdot g\cdot J(y)\in H_{i(y)}$.

    We are now able to define the action $\alpha$ as
    \begin{align*}
        \alpha \colon G & \act Y \times X 
        \\
        (g,(y,x)) & \mapsto \left( g.y,r(g,y).x\right) 
    \end{align*}
    This action clearly is the same as the subaction inherited from the homomorphism
    \begin{align*}
        \alpha'\colon G & \to G \ltimes (\prod_{y\in Y} H_{i(y)}) \iso G \ltimes (\prod_{i\in I} H_i^{G/H_i}) \\
        g & \mapsto (g,(r(g,y))_{y\in Y}).
    \end{align*}
    This is indeed a homomorphism as it maps $1\in G$ to the unit and
    \begin{align*}
          &\phantom{{}={}}\alpha'(g) \circ \alpha'(h)
        \\&= 
        (g,y\mapsto r(g,y) ) \circ (h,r(h,y))
        \\ &=
        (gh, y\mapsto (r(g,h.y)\cdot r(h,y)))
        \\ &= 
        (gh, y\mapsto (J(gh.y))^{-1}gJ(h.y)(J(h.y))^{-1} h J(y))
        \\ &= 
        (gh, y\mapsto r(gh,y))
        \\&= \alpha'(g\circ h).
    \end{align*}
    So the action \eqref{eqCompose8} can be ppg-constructed. 

    To get a better understanding of the action \eqref{eqCompose8}, consider any $(y,x)\in Y\times X$ and its stabilizer subgroup with respect to $\alpha$. This stabilizer subgroup is exactly given by the set of all elements $g\in H_i\subseteq G$ that are in the stabilizer subgroup of $x$. Thus, there is a $G$-set homomorphism from $Y\times X$ (with the $\alpha$-action) to $X$ which is on this component defined by mapping $(x,y)$ to $x$. Thus,
    \begin{align}
        G \act X
    \end{align}
    is elementary implied by \eqref{eqCompose8}. We want to remark that this homomorphism is not necessarily given by the projection, but we also do not need it to be.
\end{proof}

\subsection{Comparing primitive actions}
At first, we want to consider Frattini actions, that are group actions of the form $G\act \prim(G)$ and give a classification when one of these actions ppg-constructs or elementary implies a second one. 

We start with a Lemma useful for both, ppg-constructions and elementary implications.

\begin{lemma} \label{LemmaPrimitiveClassificationToEpi}
    Let $G$ and $H$ be two groups.
    If
    $$\Pol_G(\prim(G)) \not\models \condition (H \act \prim(H))$$
    then there is a group epimorphism $G \twoheadrightarrow H/\frat(H)$.
\end{lemma}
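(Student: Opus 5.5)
The plan is to extract from the failure of the condition a single map $t$, and then read off the required epimorphism from the subquotient isomorphism of Lemma~\ref{LemmaIsoOfSubquotient}. Put $X=\prim(G)$ and $Y=\prim(H)$, and first dispose of the degenerate cases. If $H=\{1\}$, then $H/\frat(H)$ is trivial and there is nothing to prove. If $G=\{1\}$ and $H\neq\{1\}$, then $X=\emptyset$ and $Y\neq\emptyset$, so $\Pol^Y_G(X)=\emptyset^{\emptyset^Y}$ is the one-point set consisting of the empty map. Its single element is fixed by $H$, so the condition holds and the hypothesis is never met. Hence we may assume $G\neq\{1\}\neq H$. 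Then $X$ has no $G$-fixed point and, by Lemma~\ref{LemmaMinimalFixedPointFreeHomEquiv}, every proper subgroup of $G$ has a fixed point on $X$; the same holds for $H$ on $Y$.

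By Lemma~\ref{LemmaConditionElementary}, the failure of $\condition(H\act\prim(H))$ yields some $t\in X^Y$ such that $\stab_G(H.t)$ has no fixed point on $X$. By minimal fixed-point freeness this forces $\stab_G(H.t)=G$, so $G.t\subseteq H.t$. Lemma~\ref{LemmaIsoOfSubquotient} then gives
\[
G/\stab_G(t)\iso \stab_H(G.t)/\stab_H(t).
\]
So $G$ surjects onto $S/N$, where $S=\stab_H(G.t)$ and $N=\stab_H(t)\normalsubgroup S$. It remains to show that $S=H$ and $N\le\frat(H)$. Then the composite $G\twoheadrightarrow H/N\twoheadrightarrow H/\frat(H)$ is the desired epimorphism.

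Both remaining claims should follow from one mechanism: a point of $Y$ on which every element of $G.t$ takes the same value would give a $G$-fixed point of $X$. First, suppose $S<H$. Then $S$ fixes some $y\in Y$. Every element of $G.t$ lies in $H.t$ and is stabilised as a set by $S$. More precisely, the bijection in Lemma~\ref{LemmaIsoOfSubquotient} shows that each $g.t$ equals $t_h$ for some $h\in S$. Hence $(g.t)(y)=t(h.y)=t(y)$, so $g.t(y)=t(y)$ for all $g$, and $t(y)$ is a $G$-fixed point of $X$, a contradiction. Thus $S=H$ and $N\normalsubgroup H$.

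Second, suppose $N\not\le\frat(H)$. Choose a maximal subgroup $M$ with $N\not\le M$, so that $NM=H$, and let $y\in Y$ be the point with stabiliser $M$. Then $N$ is transitive on the orbit $H.y$, and since $t_n=t$ for all $n\in N$, the map $t$ is constant on $H.y$ with some value $x$. For any $g\in G$ we have $g.t=t_h$ for some $h\in H$, so $(g.t)(y)=t(h.y)=x$; that is, $g.t(y)=x$ for all $g$. Because $(g.t)(y)=g.(t(y))=g.x$, the point $x$ is a $G$-fixed point of $X$, again a contradiction.

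I expect the main obstacle to be this last step, passing from an epimorphism onto some quotient $H/N$ to one onto $H/\frat(H)$. The key observation is that $NM=H$ makes $N$ transitive on the primitive component with stabiliser $M$. The rest is bookkeeping with the left and right actions on $X^Y$, in particular checking that $(g.t)(y)=g.(t(y))$ and $t_h(y)=t(h.y)$ are used consistently.
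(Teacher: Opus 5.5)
Your proof is correct. Up to the isomorphism $G/\stab_G(t)\iso \stab_H(G.t)/\stab_H(t)$ and the argument that $\stab_H(G.t)=H$, it follows the paper's proof. You are more careful in two places: you treat the trivial-group cases explicitly, and you justify that the $h$ with $g.t=t_h$ can be taken in $\stab_H(G.t)$, which the paper uses tacitly.

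The real difference is the final step, showing $\stab_H(t)\le\frat(H)$.

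\textbf{The paper's route.} Fix a maximal $M=\stab_H(y)$. The paper notes that $\{g\in G : (g.t)(y)=t(y)\}=\stab_G(t(y))$ is a maximal subgroup of $G$ containing $\stab_G(t)$. It transports this through the isomorphism to a maximal subgroup of $H/\stab_H(t)$. Its preimage $\{h : t(h.y)=t(y)\}$ contains $M$ and is proper, so it equals $M$. Together with the easy inclusion $\frat(H)\le\stab_H(t)$, this gives $\stab_H(t)=\frat(H)$ exactly.

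\textbf{Your route.} You argue by contradiction: $N\not\le M$ gives $NM=H$. Then $N$ is transitive on $H.y$, so $t$ is constant there, and $t(y)$ becomes a $G$-fixed point. This is the same mechanism you already used for $\stab_H(G.t)=H$.

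\textbf{Comparison.}
\begin{itemize}
\item Your argument avoids checking that the abstract isomorphism of Lemma~\ref{LemmaIsoOfSubquotient} carries $\stab_G(t(y))$ to exactly $\{h : t(h.y)=t(y)\}$. That check requires knowing the isomorphism is $g\stab_G(t)\mapsto\stab_H(t)h$ with $g.t=t_h$, and that this map is a homomorphism because the left and right actions commute.
\item Your argument only proves $N\le\frat(H)$, which is all the composite epimorphism needs.
\item It does use $N\normalsubgroup H$, to make $NM$ a subgroup, and that comes from $\stab_H(G.t)=H$. So the order of your two steps matters.
\item The paper's version gives the sharper $G/\stab_G(t)\iso H/\frat(H)$. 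You recover this by adding the trivial inclusion $\frat(H)\le N$.
\end{itemize}

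A cosmetic point: call $y$ ``a point with stabiliser $M$'' rather than ``the point'', since there may be several.
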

\begin{proof}
This is a generalization of \cite[Lemma 4.17]{meyerStarke2024finitesimplegroups}.
    By Lemma~\ref{LemmaConditionElementary}, there exists $t\in \prim(G)^{\prim(H)}$ such that $\stab_G(H.t)\act \prim(G)$ has no fixed point. Fix such a $t$. As every proper subgroup of $G$ has a fixed-point in $\prim(G)$, we get that $\stab_G(H.t)=G$.

    Assume that $\stab_H(G.t)$ would be a proper subgroup of $H$. Then, the action $\stab_H(G.t)\act \prim(H)$ would have a fixed point $y$. Now, we would get for all $g\in G$ that there exists $h\in H$ with $g.t=t_h$ as $G=\stab_G(H.t)$. Hence $(g.t)(y)=t_h(y)=t(h.y)=t(y)$. Therefore, $t(y)$ is a fixed-point of $G\act \prim(G)$, a contradiction. Hence, $\stab_H(G.t)=H$.

    Thus, we get by Lemma~\ref{LemmaIsoOfSubquotient} that
    $$
        G/\stab_G(t) = \stab_G(H.t)/\stab_G(t) \iso \stab_H(G.t)/\stab_H(t)=H/\stab_H(t).
    $$
    So it is left to show that $\stab_H(t)=\frat(H)$ to get that $H/\frat(H)$ is isomorphic to a quotient of $G$.
    
    Note that $\frat(H)\le \stab_H(t)$ as the action of $\frat(H)$ on $\prim(H)$ is trivial. For the other inclusion let $M\le H$ be a maximal subgroup and consider any $y\in \prim(H)$ such that $M = \stab_H(y)$. Note that $t_h(y)=t(y)$ for all $h\in M$. Now, the set $S$ of all $g\in G$ such that $(g.t)(y)=t(y)$ is a subgroup of $G$, namely the maximal subgroup $\stab_{G\act X}(t(y))$. Note that $S$ contains $\stab_G(t)$ as every group element that acts trivial on $t$ cannot change $t(y)$. Thus, the set $S/\stab_G(t)$ of all elements $[g]$ in $G/\stab_G(t)$ with $([g].t)(y)=t(y)$ is also a maximal subgroup. Now, we can use Lemma~\ref{LemmaIsoOfSubquotient} and get that the set of all elements in $H/\stab_H(t)$ that preserve $t(y)$ is a maximal subgroup and thus a proper subgroup. Thus, the set of all $h\in H$ with $t_h(y)=t(y)$ has to be a proper subgroup and thus equal to $M$. So $\stab_H(t)\le M$. As $M$ was arbitrary, this holds for all maximal subgroups and thus $\stab_H(t) \le \frat(H)$. It follows that $\stab_H(t) = \frat(H)$ and $G/\stab_G(t)\iso H/\frat(H)$.
\end{proof}

\subsubsection{ppg-constructions}

We can use the previous lemma to describe ppg-constructions between Frattini actions.

\begin{corollary} \label{CorPrimitiveClassificationOnlyIfDirection}
	If the group action $G\act \prim(G)$ ppg-constructs the action $H\act \prim(H)$ then  we have
    $$\Pol_G(\prim(G)) \not\models \condition (H \act \prim(H))$$ and
    there is a group epimorphism $G \twoheadrightarrow H/\frat(H)$.
\end{corollary}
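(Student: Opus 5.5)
The argument is a short chain of three earlier results. Assume $G\act \prim(G)$ ppg-constructs $H\act \prim(H)$.

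First, by Lemma~\ref{LemmaNoSelfLoop}, applied to $H\act\prim(H)$, we get
$$\Pol_H(\prim(H))\not\models\condition(H\act\prim(H)).$$
This needs $H\act \prim(H)$ to have no fixed point, which holds by Lemma~\ref{LemmaMinimalFixedPointFreeHomEquiv}: the Frattini action is minimal fixed-point free. It also holds in the degenerate case $H=\{1\}$, where $\prim(H)=\emptyset$ has no fixed point; the Remark after the empty-action discussion says Lemma~\ref{LemmaNoSelfLoop} remains valid for the empty action.

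Next, suppose for contradiction that $\Pol_G(\prim(G))\models \condition(H\act\prim(H))$. By Lemma~\ref{LemmaPPConstructionsPreserveMinorConditions}, every loop condition satisfied by $\Pol_G(\prim(G))$ is also satisfied by $\Pol_H(\prim(H))$, because $G\act\prim(G)$ ppg-constructs $H\act\prim(H)$. So $\Pol_H(\prim(H))\models\condition(H\act\prim(H))$, which contradicts the previous step. Hence
$$\Pol_G(\prim(G))\not\models\condition(H\act\prim(H)).$$
This is the first claim.

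Finally, Lemma~\ref{LemmaPrimitiveClassificationToEpi} applies directly to this non-satisfaction and gives an epimorphism $G\twoheadrightarrow H/\frat(H)$.

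The only point that needs care is the empty or trivial-group edge cases. When $G=\{1\}$ or $H=\{1\}$, I would check that the cited lemmas are stated in a form that covers empty actions, which the paper's Remark asserts. For instance, if $G=\{1\}$, then $\prim(G)=\emptyset$ and $\Pol_G(\emptyset)$ satisfies every condition with non-empty domain. In that case ppg-constructing $H\act\prim(H)$ forces $\prim(H)$ to be empty or homomorphically equivalent to a fixed point, so $H$ must be trivial. Then $H/\frat(H)$ is trivial and the epimorphism exists trivially.
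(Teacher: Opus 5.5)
Your proof is correct and follows the paper's argument exactly. Lemma~\ref{LemmaNoSelfLoop} gives $\Pol_H(\prim(H))\not\models\condition(H\act\prim(H))$, Lemma~\ref{LemmaPPConstructionsPreserveMinorConditions} transfers this non-satisfaction back to $\Pol_G(\prim(G))$, and Lemma~\ref{LemmaPrimitiveClassificationToEpi} then yields the epimorphism; your extra checks (the fixed-point-free hypothesis of Lemma~\ref{LemmaNoSelfLoop} and the trivial-group/empty-action cases) are sound and only make explicit what the paper leaves implicit.
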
 
\begin{proof}
    We have by Lemma~\ref{LemmaNoSelfLoop} that 
    $$
        \Pol_H(\prim(H)) \not\models \condition (H \act \prim(H))
    $$
    and by Lemma~\ref{LemmaPPConstructionsPreserveMinorConditions} also 
    $
        \Pol_G(\prim(G)) \not\models \condition (H \act \prim(H))
    $.
    Thus, we can apply Lemma~\ref{LemmaPrimitiveClassificationToEpi}.
\end{proof}

\begin{lemma} \label{LemmaFrattiniQuotient}
    Let $G$ be a group.
	The group actions $G\act \prim(G)$ and $G/\frat(G) \act \prim(G/\frat(G))$ are ppg-interconstructible.
\end{lemma}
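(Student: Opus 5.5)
Both directions come straight from the definition of ppg-construction, using the observation in Section~\ref{SectionFrattini} that $\frat(G)$ acts trivially on $\prim(G)$. The induced quotient action of $G/\frat(G)$ on $\prim(G)$ is isomorphic to the \frattiniaction{} $G/\frat(G)\act\prim(G/\frat(G))$. The full power step will always be the first full power, i.e.\ the exponent set $Y$ is a single point. Then $\Sym(Y)$ is trivial and the full power action is just $G\act X$ itself, up to the obvious identification.

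\emph{$G\act\prim(G)$ ppg-constructs $G/\frat(G)\act\prim(G/\frat(G))$.} Take the first full power, which is $G\act\prim(G)$. Take the identity as subgroup action. Then take the quotient group sub-action along the epimorphism $G\twoheadrightarrow G/\frat(G)$ with kernel $N=\frat(G)$. Since $\fix_{\prim(G)}(\frat(G))=\prim(G)$, this sub-action is the quotient group action $G/\frat(G)\act\prim(G)$. By Section~\ref{SectionFrattini} it is isomorphic to $G/\frat(G)\act\prim(G/\frat(G))$, and an isomorphism is in particular a homomorphic equivalence.

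\emph{$G/\frat(G)\act\prim(G/\frat(G))$ ppg-constructs $G\act\prim(G)$.} Take the first full power of $G/\frat(G)\act\prim(G/\frat(G))$. Take the subgroup action along the (non-injective) projection $\pi\colon G\to G/\frat(G)$; the definition of subgroup action explicitly allows non-injective homomorphisms. This gives $G$ acting on $\prim(G/\frat(G))$ by $g.x=\pi(g).x$. This pulled-back action is isomorphic to $G\act\prim(G)$. Indeed, the primitive $G$-sets $G/M$ with $M$ maximal all contain $\frat(G)$, so they correspond exactly to the primitive $G/\frat(G)$-sets $(G/\frat(G))/(M/\frat(G))$, compatibly with $\pi$. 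Finally, take the trivial quotient sub-action with $N=\{1\}$ and the identity as homomorphic equivalence.

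The only point needing care is the degenerate case $G=\frat(G)$, i.e.\ $G=\{1\}$ for finite groups. There both sides are $\{1\}\act\emptyset$, which is consistent with the conventions of Section~\ref{SectionEmptyAction}, so nothing extra is needed. There is no real obstacle. The mildly delicate step is checking that the pulled-back action in the second direction is isomorphic to $G\act\prim(G)$, rather than merely homomorphically equivalent to it. This follows from the correspondence between maximal subgroups of $G$ and of $G/\frat(G)$ recalled in Section~\ref{SectionFrattini}.
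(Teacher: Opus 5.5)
Your proof is correct and follows the same route as the paper. Both identify $\prim(G)$ with $\prim(G/\frat(G))$ via the correspondence of maximal subgroups, and use the quotient group action along $G\to G/\frat(G)$ for one direction and the subgroup action along the same projection for the other; you additionally spell out the trivial full-power and quotient steps and the degenerate case $G=\{1\}$, which the paper leaves implicit.
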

\begin{proof}
	Note that maximal subgroups of $G/\frat(G)$ are exactly the quotients of maximal subgroups of $G$. Thus we can identify $\prim(G)$ with $\prim(G/\frat(G))$. Then, this is the subgroup action respectively the quotient group action with respect to the group homomorphism $G\to G/\frat(G)$.
\end{proof}

\begin{lemma} \label{LemmaPrimitiveClassificationIfDirection}
	The group action $G\act \prim(G)$ ppg-constructs the group action $H\act \prim(H)$ if there is a group epimorphism $G \twoheadrightarrow H/\frat(H)$.
\end{lemma}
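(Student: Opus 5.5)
The plan is to reduce to the case where $H$ has trivial Frattini subgroup, and then realise $H/\frat(H)\act\prim(H/\frat(H))$ directly as a quotient group sub-action of $G\act\prim(G)$. Write $K=H/\frat(H)$ and let $\varphi\colon G\twoheadrightarrow K$ be the given epimorphism, with kernel $N=\ker\varphi\normalsubgroup G$. By Lemma~\ref{LemmaFrattiniQuotient}, $K\act\prim(K)$ ppg-constructs $H\act\prim(H)$. So by Lemma~\ref{LemmaPPConstructionsTransitive} it suffices to show that $G\act\prim(G)$ ppg-constructs $K\act\prim(K)$.

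For this I would run the four steps of a ppg-construction in their most degenerate form.
\begin{enumerate}
    \item Take the first full power. Its group is $\Sym(1)\ltimes G^{1}\iso G$ acting on $\prim(G)^1\iso\prim(G)$, so it is just $G\act\prim(G)$ itself.
    \item Take the subgroup action along $\id_G$.
    \item Take the quotient group sub-action along $\varphi$. This yields an action of $G/N\iso K$ on $\fix_{\prim(G)}(N)$.
    \item Identify this with $\prim(K)$, so that the final homomorphic equivalence is an isomorphism.
\end{enumerate}

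The key input for the last step is the remark in Section~\ref{SectionFrattini}. For a normal subgroup $N\normalsubgroup G$, the maximal subgroups of $G/N$ are exactly the $M/N$ with $N\le M$ and $M$ maximal in $G$. The points of $\prim(G)$ fixed by $N$ are exactly the cosets $gM$ with $N\le gMg^{-1}$; since $N$ is normal, this is the same as $N\le M$. Hence the quotient group sub-action on $\fix_{\prim(G)}(N)$ is isomorphic, as a $G/N$-set, to $\prim(G/N)$. Transporting along the isomorphism $G/N\iso K$ induced by $\varphi$ then gives $K\act\prim(K)$.

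The main point needing care is that this identification is an isomorphism of $K$-sets and not merely a bijection. Concretely, the orbit $G/M$ with $N\le M$ must correspond to the primitive $K$-set $K/\varphi(M)$, compatibly with the actions, and every primitive $K$-set must arise this way exactly once up to isomorphism. Both follow from the correspondence theorem together with the fact that conjugacy of the subgroups $M\supseteq N$ in $G$ corresponds to conjugacy of their images in $K$.

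I would also check the degenerate cases against the conventions of Section~\ref{SectionEmptyAction}. If $K$ is trivial, then $N=G$. For $G\neq\{1\}$ no maximal subgroup contains $G$, so the fixed-point set is empty, which matches $\prim(\{1\})=\emptyset$. For $G=\{1\}$ both sides are empty anyway, so the construction is consistent in all cases.
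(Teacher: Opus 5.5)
Your proposal is correct and follows essentially the same route as the paper: take the quotient group sub-action of $G\act\prim(G)$ along the epimorphism with kernel $N$, identify $\fix_{\prim(G)}(N)$ with $\prim(G/N)\iso\prim(H/\frat(H))$ via the correspondence of maximal subgroups containing $N$, and finish with Lemma~\ref{LemmaFrattiniQuotient}. Your additional checks only make explicit details the paper leaves implicit: that normality gives $N\le gMg^{-1}$ iff $N\le M$, that conjugacy classes correspond, and that the trivial-group cases are consistent.
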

\begin{proof}
	If there is such a group epimorphism, then let $N$ be its kernel and note that the epimorphism induces an isomorphism of $H/\frat(H)$ and $G/N$. Note furthermore that the maximal subgroups of $G/N$ are given by quotients of the maximal subgroups of $G$ that contain $N$. Thus,
    \begin{align*}
        G\act \prim(G) 
        \ppgle &&G/N &\act \stab_{\prim(G)}(N)  &\text{(quotient action)}
        \\\iso &&G/N &\act \prim(G/N)&\text{(see Section~\ref{SectionFrattini})}
        \\\ppgeq &&H/\frat(H) &\act \prim(G/N) &\text{(isomorphic groups)}
        \\\iso &&H/\frat(H) &\act \prim(H/\frat(H)) &\text{(isomorphic groups)}
        \\\ppgeq && H &\act \prim(H) &\text{(Lemma~\ref{LemmaFrattiniQuotient})}
    \end{align*}
    as claimed.
\end{proof}

\subsubsection{loop conditions}

We describe for which groups $G$ and $G'$ the relation $G'\act \prim(G')\ecle G\act \prim(G)$ holds.

\begin{lemma} \label{LemmaConditionImpliesEpiShortened} 
    Consider two groups $G$ and $G'$.
    Assume that
    $\Pol_G(\prim(G)) \models \condition(G' \act \prim(G'))$. 
    Then, $G'\act \prim(G')$ cannot elementary imply as condition $G \act \prim(G)$.
\end{lemma}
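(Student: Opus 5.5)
The plan is to argue by contradiction, using that elementary implications are genuine implications of loop conditions (Lemma~\ref{LemmaElImplImplyImply}) together with the fact that no Frattini action satisfies its own condition (Lemma~\ref{LemmaNoSelfLoop}). Suppose $\Pol_G(\prim(G)) \models \condition(G' \act \prim(G'))$ and, for contradiction, $(G'\act \prim(G'))\ecle (G\act \prim(G))$.

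Applying Lemma~\ref{LemmaElImplImplyImply} to the structure $G\act\prim(G)$, with $H\act Y = G'\act\prim(G')$ and $H'\act Y' = G\act \prim(G)$, we would obtain
$$
    \Pol_G(\prim(G)) \models \condition(G\act\prim(G)).
$$
This contradicts Lemma~\ref{LemmaNoSelfLoop} provided $G\act\prim(G)$ has no fixed point. By Lemma~\ref{LemmaMinimalFixedPointFreeHomEquiv}, for $G\neq\{1\}$ the Frattini action is minimal fixed-point free, hence fixed-point free.

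For $G=\{1\}$ we have $\prim(G)=\emptyset$, which trivially has no fixed point. Indeed, $\Pol_{\{1\}}(\emptyset)\not\models\condition(\{1\}\act\emptyset)$ is recorded explicitly in the section on the empty action, and the Remark states that Lemma~\ref{LemmaNoSelfLoop} also covers the empty action. So in every case $G\act\prim(G)$ fails its own condition, and the assumed elementary implication is impossible.

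The main obstacle is therefore not conceptual but consists of two checks. First, one must verify that Lemma~\ref{LemmaElImplImplyImply} applies verbatim, including the degenerate cases where $G'=\{1\}$ or $G=\{1\}$ so that the empty action appears. Its statement contains a typo ($\Pol_G(H)$ should read $\Pol_G(X)$), which should be read in the intended way. Second, one must handle the edge cases of the empty set via the conventions of Section~\ref{SectionEmptyAction}. If $G'=\{1\}$, the hypothesis $\Pol_G(\prim(G))\models\condition(\{1\}\act\emptyset)$ means $G\act\prim(G)$ has a fixed point, which forces $\prim(G)$ to be empty or fixed-point-containing; the argument above covers this uniformly, since the empty action has no fixed point. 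I would spell out these small cases briefly and otherwise keep the proof to the two-line contradiction.
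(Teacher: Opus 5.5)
Your proof is correct and follows the paper's argument: Lemma~\ref{LemmaNoSelfLoop} gives $\Pol_G(\prim(G))\not\models\condition(G\act\prim(G))$, while Lemma~\ref{LemmaElImplImplyImply} would transfer the assumed $\condition(G'\act\prim(G'))$ to $\condition(G\act\prim(G))$, which is a contradiction. One remark on the edge case $G'=\{1\}$: the hypothesis can never hold there, because $G\act\prim(G)$ never has a fixed point (the empty action included). So that case is vacuous, and the claim that it ``forces $\prim(G)$ to be empty or fixed-point-containing'' is slightly off, though this does not affect your main argument.
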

\begin{proof}
    In this case, we have 
    \begin{align*}
        \Pol_G(\prim(G)) &\not \models \condition(G \act \prim(G))&& \text{by Lemma~\ref{LemmaNoSelfLoop} and}\\
        \Pol_G(\prim(G)) &\models \condition(G' \act \prim(G'))&& \text{by assumption.}
    \end{align*}
     Thus, the second condition cannot elementary imply the first one by Lemma~\ref{LemmaElImplImplyImply}.
\end{proof}

\begin{lemma} \label{LemmaEpiImpliesCondition}
    Consider two groups $G$ and $G'$.
    Assume that there is a group epimorphism $G \to G'/\frat(G')$. Then, $G'\act \prim(G')$ elementary implies as loop condition $G \act \prim(G)$.
\end{lemma}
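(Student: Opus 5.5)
We reduce to the case $\frat(G')=\{1\}$ and then exhibit the four steps of an elementary implication directly, with the star power of exponent $n=1$.

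\emph{Reduction.} We first show that $G'\act \prim(G')$ elementary implies $G'/\frat(G')\act \prim(G'/\frat(G'))$. By Section~\ref{SectionFrattini}, $\frat(G')$ acts trivially on $\prim(G')$. Hence we may take $n=1$, the subgroup action along the identity of $G'$, the identity homomorphism of $G'$-sets, and then the quotient group action along $G'\to G'/\frat(G')$, whose kernel acts trivially. The result is isomorphic to the Frattini action of $G'/\frat(G')$, again by Section~\ref{SectionFrattini}. By the transitivity lemma for elementary implications (applied with $|I|=1$), it therefore suffices to treat the case $\frat(G')=\{1\}$ with an epimorphism $\pi\colon G\twoheadrightarrow G'$.

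\emph{Degenerate case.} If $G'$ is trivial, then $\prim(G')=\emptyset$. Section~\ref{SectionEmptyAction} shows that $\condition(\{1\}\act\emptyset)$ elementary implies every condition, so we are done. From now on assume $G'\neq\{1\}$.

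\emph{Main construction.} Take the star power with $n=1$, which is just $G'\act\prim(G')$. As the subgroup action we use $\pi$; the definition explicitly allows non-injective homomorphisms. This gives the action $G\act\prim(G')$ via $g.y=\pi(g).y$.

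The key observation concerns the orbits of this action. Write $\prim(G')=\coprod_{M} G'/M$ over representatives $M$ of conjugacy classes of maximal subgroups. Each $G$-orbit is transitive with stabiliser $\pi^{-1}(M)$. Because $\pi$ is surjective, subgroups of $G$ containing $\ker\pi$ correspond to subgroups of $G'$, so $\pi^{-1}(M)$ is a maximal subgroup of $G$. Hence each orbit $G'/M$, viewed as a $G$-set, is isomorphic to the primitive $G$-set $G/\pi^{-1}(M)$, which is a component of $\prim(G)$. Mapping each orbit onto the corresponding component yields a $G$-set homomorphism $\prim(G')\to\prim(G)$; this is step~3. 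Injectivity is not even needed. Finally, as step~4 we take the quotient along the identity of $G$, and we obtain exactly $G\act\prim(G)$.

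\emph{Where care is needed.} The only delicate points are bookkeeping. First, the ``subgroup action'' step must be read as allowing an arbitrary homomorphism into the star power group; the paper's definition permits this. Second, the reduction to $\frat(G')=\{1\}$ must be composed correctly with the transitivity lemma. I expect the orbit--stabiliser identification above, namely that preimages of maximal subgroups under an epimorphism are maximal, to be the core, and it is routine.
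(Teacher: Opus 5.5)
Your proposal is correct and is essentially the paper's proof: quotient $G'\act\prim(G')$ by the trivially acting $\frat(G')$, take the subgroup action along the epimorphism $G\to G'/\frat(G')$, and then use a $G$-set homomorphism $\prim(G')\to\prim(G)$, chaining the steps by transitivity. Your argument for the last step (each orbit has stabiliser $\pi^{-1}(M)$, which is maximal) is a more explicit form of the paper's remark that surjectivity makes $G\act\prim(G')$ fixed-point free. Your separate treatment of trivial $G'$ is harmless but unnecessary, since the empty map $\emptyset\to\prim(G)$ already works.
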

\begin{proof} We have
    \begin{align*}
        G'\act \prim(G')
        &\ecle G/\frat(G')\act \prim(G')&&\text{ (quotient action along }G'\to G'/\frat(G'))\\
        &\ecle G\act \prim(G')&&\text{ (subgroup action along }G\to G'/\frat(G'))\\
        &\ecle G\act \prim(G) &&\text{ ($G$-set homomorphism } \prim(G')\to \prim(G))
    \end{align*}
    showing the claim.
    Note that $G\to G'$ being surjective is used in the assumption that $G\act \prim(G')$ has no fixed point and thus there is a $G$-set homomorphism $\prim(G')\to \prim(G)$.
\end{proof}

\subsubsection{Combined}
We get the full classification for structures and conditions of the form $G\act \prim(G)$.
\begin{theorem} \label{TheoremFrattiniActionPoset}
    Let $G$ and $H$ be two groups. Then, the following are equivalent:
\begin{enumerate}
    \item \label{NrFrattiniActionPoset1}
    There is a group epimorphism $G\to H/\frat(H)$,
    \item \label{NrFrattiniActionPoset2}
    $\Pol_G(\prim(G)) \not\models \condition (H \act \prim(H))$,
    \item \label{NrFrattiniActionPoset3}
    $G\act \prim(G)$ ppg-constructs the action $H\act \prim(H)$.
    \item \label{NrFrattiniActionPoset4}
    $\condition(H\act \prim(H))$ elementary implies $\condition(G \act \prim(G))$,
\end{enumerate}
\end{theorem}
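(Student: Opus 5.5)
The plan is to assemble the four equivalences from the lemmas of this subsection, with condition \ref{NrFrattiniActionPoset1} as the hub. I will prove the cycle \ref{NrFrattiniActionPoset1} $\Rightarrow$ \ref{NrFrattiniActionPoset3} $\Rightarrow$ \ref{NrFrattiniActionPoset2} $\Rightarrow$ \ref{NrFrattiniActionPoset1}, and then attach \ref{NrFrattiniActionPoset4} through the equivalence \ref{NrFrattiniActionPoset1} $\Leftrightarrow$ \ref{NrFrattiniActionPoset4}.

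First, \ref{NrFrattiniActionPoset1} $\Rightarrow$ \ref{NrFrattiniActionPoset3} is exactly Lemma~\ref{LemmaPrimitiveClassificationIfDirection}. Next, \ref{NrFrattiniActionPoset3} $\Rightarrow$ \ref{NrFrattiniActionPoset2} is the first half of Corollary~\ref{CorPrimitiveClassificationOnlyIfDirection}. That corollary combines Lemma~\ref{LemmaNoSelfLoop} for $H\act\prim(H)$ with the preservation of loop conditions in Lemma~\ref{LemmaPPConstructionsPreserveMinorConditions}. Finally, \ref{NrFrattiniActionPoset2} $\Rightarrow$ \ref{NrFrattiniActionPoset1} is Lemma~\ref{LemmaPrimitiveClassificationToEpi}. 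This closes the first cycle.

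For the loop-condition side, \ref{NrFrattiniActionPoset1} $\Rightarrow$ \ref{NrFrattiniActionPoset4} is Lemma~\ref{LemmaEpiImpliesCondition} applied with $G'=H$. For \ref{NrFrattiniActionPoset4} $\Rightarrow$ \ref{NrFrattiniActionPoset2} I argue by contraposition. Suppose \ref{NrFrattiniActionPoset2} fails, so $\Pol_G(\prim(G))\models\condition(H\act\prim(H))$. Then Lemma~\ref{LemmaConditionImpliesEpiShortened}, with $G'=H$, says that $H\act\prim(H)$ cannot elementary imply $G\act\prim(G)$, so \ref{NrFrattiniActionPoset4} fails. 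Together with the first cycle, all four statements are equivalent.

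The one point I expect to need care is the degenerate case of trivial groups and empty actions. If $H/\frat(H)$ is trivial, then $\prim(H)=\emptyset$, and the arguments above pass through the empty action conventions of Section~\ref{SectionEmptyAction}. I would verify this case directly. An epimorphism onto the trivial group always exists, and $\Pol_G(\prim(G))\not\models\condition(H\act\emptyset)$ holds for $G\ne\{1\}$ because $\prim(G)$ has no fixed point; for $G=\{1\}$ we have $\prim(G)=\emptyset$, which has no fixed point either. Both ppg-construction and elementary implication of the empty condition are handled by the discussion in that section. The remark after that section, stating that Lemmas~\ref{LemmaConditionElementary}, \ref{LemmaNoSelfLoop} and \ref{LemmaSymCondition} remain valid for the empty action, then lets the general lemmas cover everything else. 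Beyond this bookkeeping the proof is a direct assembly of the cited results.
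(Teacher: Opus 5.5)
Your proposal is correct and follows the paper's proof: the paper derives \ref{NrFrattiniActionPoset2}$\Rightarrow$\ref{NrFrattiniActionPoset1}, \ref{NrFrattiniActionPoset3}$\Rightarrow$\ref{NrFrattiniActionPoset2}, \ref{NrFrattiniActionPoset1}$\Rightarrow$\ref{NrFrattiniActionPoset3}, \ref{NrFrattiniActionPoset4}$\Rightarrow$\ref{NrFrattiniActionPoset2} and \ref{NrFrattiniActionPoset1}$\Rightarrow$\ref{NrFrattiniActionPoset4} from exactly the lemmas you cite, with $G'=H$ in the last two. Your separate check of the degenerate case $H=\{1\}$ (where $\prim(H)=\emptyset$) is correct; the paper does not spell it out and leaves it to its empty-action conventions.
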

\begin{proof}
    \ref{NrFrattiniActionPoset2} implies \ref{NrFrattiniActionPoset1} by Lemma~\ref{LemmaPrimitiveClassificationToEpi}.

    \ref{NrFrattiniActionPoset3} implies \ref{NrFrattiniActionPoset2} by Corollary~\ref{CorPrimitiveClassificationOnlyIfDirection}.

    \ref{NrFrattiniActionPoset1} implies \ref{NrFrattiniActionPoset3} by Lemma~\ref{LemmaPrimitiveClassificationIfDirection}.

    \ref{NrFrattiniActionPoset4} implies \ref{NrFrattiniActionPoset2} by Lemma~\ref{LemmaConditionImpliesEpiShortened}.

    \ref{NrFrattiniActionPoset1} implies \ref{NrFrattiniActionPoset4} by Lemma~\ref{LemmaEpiImpliesCondition}.
\end{proof}

\subsection{Reducing general actions to primitive actions}
For two arbitrary group actions, $G\act X$ and $H\act Y$ the questions wether they ppg-construct each other and wether they elementary imply each other can be reduced to questions in terms of Frattini actions of subquotients, as we will show now.

\subsubsection{ppg-constructions}
We start again with ppg-constructions.
\begin{lemma}	\label{LemmaGeneralToPrimitiveA}
	The group action $G\act X$ ppg-constructs the action $H\act \prim(H)$ for all subgroups $H\le G$ such that $H\act X$ (with the subgroup action) is a minimal fixed point free action.
\end{lemma}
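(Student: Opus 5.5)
The construction has three steps. First pass from $G\act X$ to the subgroup action $H\act X$; then invoke Lemma~\ref{LemmaMinimalFixedPointFreeHomEquiv} to replace $H\act X$ by $H\act\prim(H)$ up to homomorphic equivalence. The only point needing care is that a ppg-construction prescribes a fixed order of steps, namely full power, subgroup action, quotient group sub-action, and homomorphic equivalence. We realise these in order:
\begin{enumerate}
    \item Take the first full power action. For $Y$ a one-element set, this is $\Sym(1)\ltimes G^1\act X^1$, which is canonically isomorphic to $G\act X$.
    \item Take the subgroup action along the inclusion $H\hookrightarrow G$. This yields $H\act X$.
    \item Take the quotient group sub-action along the identity $H\to H$, i.e.\ with trivial kernel $N=\{1\}$. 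Since $\fix_X(\{1\})=X$, this returns $H\act X$ unchanged.
    \item Apply a homomorphic equivalence to reach $H\act \prim(H)$, as justified below.
\end{enumerate}

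For the last step, we need $H\act X$ and $H\act \prim(H)$ to be homomorphically equivalent as $H$-sets. By hypothesis, $H\act X$ is minimal fixed-point free. If $|H|\ge 2$, Lemma~\ref{LemmaMinimalFixedPointFreeHomEquiv} gives that $\structure(H\act X)$ is homomorphically equivalent to $\structure(H\act\prim(H))$. By Example~\ref{ExampleGSet}, homomorphisms of these structures are exactly $H$-set homomorphisms, so the two $H$-sets are homomorphically equivalent.

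It remains to handle $H=\{1\}$. Here minimal fixed-point freeness forces $X=\emptyset$, since the trivial group has a fixed point on any nonempty set. We also have $\prim(\{1\})=\emptyset$ by convention, so the two actions coincide. Section~\ref{SectionEmptyAction} notes that Lemma~\ref{LemmaMinimalFixedPointFreeHomEquiv} extends to this case, so no extra work is needed.

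I expect the main obstacle to be bookkeeping rather than substance: checking that the first full power really is isomorphic to $G\act X$ under the paper's conventions for $\Sym(X)\ltimes G^X$, and that isomorphic actions count as homomorphically equivalent. Both are routine.
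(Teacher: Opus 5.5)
Your proof is correct and follows the same route as the paper: pass to the subgroup action $H\act X$, then use Lemma~\ref{LemmaMinimalFixedPointFreeHomEquiv} to replace it by the homomorphically equivalent $H\act \prim(H)$. The paper leaves implicit the parts you spell out, namely padding with a first full power and a trivial quotient, and the case $H=\{1\}$, which forces $X=\emptyset=\prim(\{1\})$.
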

\begin{proof}
	Let $H\le G$ such that $H \act X$ is a minimal fixed-point-free action. Then by definition, $G\act X$ ppg-constructs $H \act X$ as this is a subaction. Since $H\act X$ is minimal fixed-point free, it is homomorphically equivalent to $H \act \prim(H)$. Thus, this action can also be constructed.
\end{proof}

\begin{remark}
    Note that if $G\act X$ is an action on a non-empty set then $\{1\}$ is never a minimal fixed-point free action as this group always acts with fixed point. If $G\act X$ has a fixed point then the set of all subgroups $S$ such that $S\act X$ is minimal fixed-point free, is empty.
\end{remark}

\begin{theorem} \label{TheoremConstructingGroupActionA}
    The following are equivalent for two group actions $G\act X$ and $G'\act X'$:
    \begin{enumerate}
        \item \label{NrTheoremConstructingGroupActionA1}
        $G\act X$ ppg-constructs $G'\act X'$
        \item \label{NrTheoremConstructingGroupActionA2}
        $G\act X$ ppg-constructs $H\act \prim(H)$ for all $H\le G'$ such that $H\act X'$ is minimal fixed-point free.
        \item  \label{NrTheoremConstructingGroupActionA3}
        $\Pol_G(X)\not\models \condition(H\act \prim(H))$ for all $H\le G'$ such that $H\act X'$ is minimal fixed-point free.
        \item  \label{NrTheoremConstructingGroupActionA4}
        $\Pol_G(X)\not\models \condition(H\act X')$ for all $H\le G'$ such that $H\act X'$ is minimal fixed-point free.
    \end{enumerate}
\end{theorem}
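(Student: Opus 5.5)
We will prove the cycle of implications \ref{NrTheoremConstructingGroupActionA1} $\Rightarrow$ \ref{NrTheoremConstructingGroupActionA2} $\Rightarrow$ \ref{NrTheoremConstructingGroupActionA3} $\Rightarrow$ \ref{NrTheoremConstructingGroupActionA4} $\Rightarrow$ \ref{NrTheoremConstructingGroupActionA1}.

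\textbf{The first three implications.} For \ref{NrTheoremConstructingGroupActionA1} $\Rightarrow$ \ref{NrTheoremConstructingGroupActionA2}, Lemma~\ref{LemmaGeneralToPrimitiveA} shows that $G'\act X'$ ppg-constructs $H\act\prim(H)$ for every relevant $H\le G'$. Transitivity (Lemma~\ref{LemmaPPConstructionsTransitive}) then finishes this step.

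For \ref{NrTheoremConstructingGroupActionA2} $\Rightarrow$ \ref{NrTheoremConstructingGroupActionA3}, Lemma~\ref{LemmaNoSelfLoop} gives $\Pol_H(\prim(H))\not\models\condition(H\act\prim(H))$, because $\prim(H)$ has no fixed point. The case $H=\{1\}$ corresponds to the empty action and is covered by the remark on empty actions. Lemma~\ref{LemmaPPConstructionsPreserveMinorConditions} then transfers the failure of this condition back to $\Pol_G(X)$.

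For \ref{NrTheoremConstructingGroupActionA3} $\Rightarrow$ \ref{NrTheoremConstructingGroupActionA4}, note that $H\act X'$ and $H\act\prim(H)$ are homomorphically equivalent by Lemma~\ref{LemmaMinimalFixedPointFreeHomEquiv}. A homomorphism $\prim(H)\to X'$ induces an $H$-equivariant map $X^{X'}\to X^{\prim(H)}$ by precomposition, and hence a map $\Pol^{\prim(H)}_G(X)\to\Pol^{X'}_G(X)$ that respects the minor actions. So an $H$-invariant polymorphism of arity $\prim(H)$ yields one of arity $X'$. The same argument with the other homomorphism gives the converse. The two loop conditions are therefore equivalent, and \ref{NrTheoremConstructingGroupActionA3} and \ref{NrTheoremConstructingGroupActionA4} are equivalent.

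\textbf{The main step, \ref{NrTheoremConstructingGroupActionA4} $\Rightarrow$ \ref{NrTheoremConstructingGroupActionA1}.} By Lemma~\ref{LemmaPPConstructMinorAction}, $G\act X$ ppg-constructs the minor action $G'\act\Pol^{X'}_G(X)$. It therefore suffices to show that this minor action is homomorphically equivalent to $G'\act X'$.

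First consider the direction $X'\to\Pol^{X'}_G(X)$. The evaluation maps $\pi_{x'}\colon t\mapsto t(x')$ are $G$-equivariant polymorphisms. Under the minor action, $g'$ sends $\pi_{x'}$ to $t\mapsto t(g'.x')$, so $x'\mapsto\pi_{x'}$ is a $G'$-homomorphism.

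Now consider the direction $\Pol^{X'}_G(X)\to X'$. Each $G'$-orbit of $\Pol^{X'}_G(X)$ is $G'/S$ for $S$ the stabiliser of some polymorphism $f$. To map this orbit into $X'$ equivariantly, I need $S$ to have a fixed point in $X'$.

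Suppose instead that $S$ acts on $X'$ without a fixed point. Choose $H\le S$ minimal with this property; then $H\act X'$ is minimal fixed-point free. Since $f$ is $S$-invariant, it is in particular $H$-invariant, so $\Pol_G(X)\models\condition(H\act X')$. This contradicts \ref{NrTheoremConstructingGroupActionA4}. Hence every stabiliser has a fixed point, and the required homomorphism is defined orbitwise.

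The main obstacle is this last direction, together with the degenerate cases. If $X'$ has a fixed point there are no relevant $H$; the argument then goes through via $0$-ary polymorphisms, or simply because $G'\act X'$ is homomorphically equivalent to a point. If $X'$ is empty, one has to check the conventions $\emptyset^0$ and $\prim(\{1\})=\emptyset$ separately.
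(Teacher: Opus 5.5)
Your proposal is correct and follows the paper's proof essentially step for step. It uses the same cycle of implications, the same lemmas (Lemma~\ref{LemmaGeneralToPrimitiveA}, Lemma~\ref{LemmaNoSelfLoop}, Lemma~\ref{LemmaPPConstructMinorAction}), and the same homomorphic equivalence between $G'\act\Pol^{X'}_G(X)$ and $X'$ via projections $\pi_{x'}$ and fixed points of stabilisers. The differences are only presentational: for (3)$\Rightarrow$(4) you spell out the precomposition argument where the paper cites Lemma~\ref{LemmaElImplImplyImply}, and in (4)$\Rightarrow$(1) you explicitly choose a minimal fixed-point-free $H\le\stab_{G'}(f)$, which the paper leaves implicit.
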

\begin{proof}
    \ref{NrTheoremConstructingGroupActionA1} implies \ref{NrTheoremConstructingGroupActionA2} as ppg-constructions are transitive by Lemma~\ref{LemmaPPConstructionsTransitive} and the fact that $G'\act X'$ ppg-constructs $H\act \prim(H)$ by Lemma~\ref{LemmaGeneralToPrimitiveA}.

    \ref{NrTheoremConstructingGroupActionA2} implies \ref{NrTheoremConstructingGroupActionA3} as ppg-constructions preserve loop conditions (Lemma~\ref{LemmaPPConstructionsPreserveMinorConditions}) and 
    $$\Pol_H(\prim(H))\not\models \condition(H\act \prim(H))$$
    by Lemma~\ref{LemmaNoSelfLoop} \cite[Lemma~4.16]{meyerStarke2024finitesimplegroups}.

    \ref{NrTheoremConstructingGroupActionA3} implies \ref{NrTheoremConstructingGroupActionA4}:
    Since $H\act X'$ is minimal fixed-point free, it is homomorphically equivalent to $H\act \prim(H)$, Lemma~\ref{LemmaMinimalFixedPointFreeHomEquiv}. Thus, the conditions $\condition(H\act X')$ and $\condition(H\act \prim(H))$ are equivalent, Lemma~\ref{LemmaElImplImplyImply}.
    
    \ref{NrTheoremConstructingGroupActionA4} implies \ref{NrTheoremConstructingGroupActionA1}:
    This is a generalization of \cite[Theorem 4.22]{meyerStarke2024finitesimplegroups}.

    Let $\Pol_G^{X'}(X)$ denote the $X'$-ary polymorphisms of $X$, note that $\Pol^{X'}(X)$ is a subset of $X^{X^{X'}}$. Consider the minor action on $\Pol_G^{X'}(X)$ which is a $G'$-action. 
    This action can be ppg-constructed from $G\act X$ by Lemma~\ref{LemmaPPConstructMinorAction}.
    We now show that $\Pol_G^{X'}(X)$ and $X'$ are homomorphically equivalent as $G'$-sets.

    It is easy to see that the map $X'\to \Pol_G^{X'}(X) \subseteq X^{X^{X'}}, x\mapsto \pi_x$, that maps an element $x\in X'$ to the projection on the $x$-component, i.e. the map $\pi_x\colon A^{X'} \to A, t\mapsto t_x$, is a homomorphism.

    To find a homomorphism $h\colon \Pol_G^{X'}(X) \to X'$, consider a non-isolated point $f\in \Pol_G^{X'}(X)$ and its connected component $C$ which admits a transitive $G'$-action.
    Let $\stab_{G'}(f)$ be the stabilizer of $f$ in $G'$. Note that $f$ is an $X'$-ary polymorphism that is invariant under $\stab_{G'}(f)$, thus $\Pol_G(X)\models \condition(H \act X')$ for all subgroups $H\le \stab_{G'}(f)$. By the assumption, $H\act X'$ cannot be a minimal fixed-point free action thus $\stab_{G'}(f) \act X'$ has a fixed-point. Define $h(f)$ as this fixed-point and $h(\minoraction{g}{f})$ as $g.h(f)$. This is easily seen to be a $G'$-set homomorphism $C \to X'$. As we can do this in every component, we get a homomorphism $\Pol_G^{X'}(X) \to X'$.

    Therefore, $X'$ is homomorphically equivalent to $\Pol_G^{X'}(X)$ and can be ppg-constructed from $G\act X$.
\end{proof}

\subsubsection{loop conditions}
The argument for loop conditions is easier this time.
\begin{lemma} \label{LemmaOneStep}
    Let $H\act Y$ be a group action without fixed point. Then, $(H\act Y)$ is as condition elementary equivalent to
    $\{H \act \prim(H)\}\cup\{(M\act Y\mid M\le H\text{ maximal subgroup}\}$.
\end{lemma}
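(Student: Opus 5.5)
We prove the two elementary implications separately.

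\emph{Direction from $H\act Y$ to the set.} First we show $(H\act Y)\ecle (H\act \prim(H))$. Since $H\act Y$ has no fixed point, every $y\in Y$ has $\stab_H(y)$ a proper subgroup, hence contained in some maximal subgroup $M_y$. Sending $h.y\mapsto hM_y$ on each orbit is then a well-defined $H$-set homomorphism $Y\to \prim(H)$. This is step 3 of an elementary implication with $n=1$ and the trivial subgroup and quotient steps.

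Second, for each maximal $M\le H$ we have $(H\act Y)\ecle (M\act Y)$, directly as a subgroup action (step 2).

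Finally, to obtain the whole set, which by definition means the external product, we use the composition lemma for elementary implications. Each member is elementarily implied by $H\act Y$, and the set trivially implies its own product, so the lemma preceding Lemma~\ref{LemmaElImplImplyImply} gives the claim. Equivalently, one can take the diagonal subgroup $H\to \prod$ directly.

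\emph{Direction from the set to $H\act Y$.} Here we apply Lemma~\ref{LemmaConditionsCompose} with the group $H$, the $H$-set $\prim(H)$ in the role of ``$Y$'', and $Y$ in the role of ``$X$''. The lemma yields
$$\{H\act \prim(H)\}\cup\{S\act Y\mid S \text{ a point stabilizer in } \prim(H)\}\ecle H\act Y.$$
The point stabilizers of $\prim(H)$ are exactly the maximal subgroups of $H$, so this set coincides with the set in the statement and the direction is proved.

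\emph{Edge cases.} If $H=\{1\}$ then $Y=\emptyset$ and $\prim(H)=\emptyset$; both sides are empty-set conditions, which are all equivalent. The main point to check is that Lemma~\ref{LemmaConditionsCompose} and the homomorphism $Y\to\prim(H)$ remain valid here. The fixed-point-freeness hypothesis is exactly what makes the map $Y\to\prim(H)$ exist, so I expect no real obstacle beyond this bookkeeping.
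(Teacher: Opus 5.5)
Your proof is correct and follows the paper's, whose entire proof is the application of Lemma~\ref{LemmaConditionsCompose} with $\prim(H)$ in the role of ``$Y$'', using that the point stabilizers of $\prim(H)$ are exactly the maximal subgroups. You also handle correctly the converse direction, which the paper leaves implicit: the $H$-set homomorphism $Y\to\prim(H)$ together with the subgroup actions $M\act Y$. The one loose point is your aside about a ``diagonal subgroup $H\to\prod$'', which does not literally exist because $H$ does not map into the maximal subgroups $M$; your composition-lemma argument already covers that step without it.
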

\begin{proof}
    We apply Lemma~\ref{LemmaConditionsCompose} with $Y=\prim(H)$.
\end{proof}

\begin{corollary} \label{CorollaryConditionToPrimitive}
    Let $H\act Y$ be a group action. Then, $H\act Y$ is as condition elementary equivalent to
    $$\{(S\act \prim(S))\mid S\le H\text{ such that }S\act Y \text{ fixed-point free}\}.$$
\end{corollary}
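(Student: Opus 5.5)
We argue by induction on $|H|$, iterating Lemma~\ref{LemmaOneStep}. Write $\mathcal S(H\act Y)$ for the set $\{S\act \prim(S)\mid S\le H,\ S\act Y \text{ fixed-point free}\}$. We show both $H\act Y\ecle \mathcal S(H\act Y)$ and $\mathcal S(H\act Y)\ecle H\act Y$, using that elementary implications compose (the transitivity lemma for loop conditions) and that a set implies each of its members, since each member is a quotient/subgroup action of the external product. If $Y$ has an $H$-fixed point, the set is empty. The empty product is $\{1\}\act\{1\}$, and $H\act Y$ and $\{1\}\act\{1\}$ imply each other: a fixed point gives a homomorphism from a point into $Y$, and $Y\to\{1\}$ always exists, together with subgroup and quotient actions along $H\to\{1\}$. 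The case $Y=\emptyset$ must also be handled: then $\{1\}\le H$ is fixed-point free and $\{1\}\act\prim(\{1\})=\{1\}\act\emptyset$, which matches the conventions of the section on the empty action.

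For the induction step, let $Y$ be non-empty without a fixed point. By Lemma~\ref{LemmaOneStep}, $H\act Y$ is equivalent to $\{H\act\prim(H)\}\cup\{M\act Y\mid M\le H \text{ maximal}\}$. Each $M\act Y$ with $M$ maximal has $|M|<|H|$, so by induction it is equivalent to $\mathcal S(M\act Y)$. Since $\mathcal S(M\act Y)$ consists exactly of those $S\le M$ with $S\act Y$ fixed-point free, we get $\bigcup_M \mathcal S(M\act Y)=\mathcal S(H\act Y)\setminus\{H\act\prim(H)\}$, because every proper subgroup lies in some maximal subgroup. Hence $H\act Y$ is equivalent to $\{H\act\prim(H)\}\cup\bigcup_M\mathcal S(M\act Y)=\mathcal S(H\act Y)$.

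To justify replacing each member of a set by an equivalent set, we need a lemma: if $A_i\eceq \mathcal B_i$ for each $i$, then $\{A_i\}\eceq\bigcup_i\mathcal B_i$. This follows from composability together with the fact that a set implies each of its members, and that the external product of the componentwise implications is again an elementary implication. The last point is the argument already used in Lemma~\ref{LemmaPPConstructionsTransitive}: products of star powers, subgroup actions, homomorphisms and quotients can be taken componentwise.

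The main obstacle is this bookkeeping, namely showing that elementary implications are compatible with external products and unions of condition sets, including the degenerate cases of the empty action and the empty product. Everything else is direct from Lemma~\ref{LemmaOneStep}.
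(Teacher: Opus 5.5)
Your proposal is correct and is essentially the paper's argument. Apply Lemma~\ref{LemmaOneStep}, discard the maximal subgroups $M$ that have a fixed point (their sets $\mathcal S(M\act Y)$ are empty), and recurse on the rest, using that every proper subgroup lies in a maximal one; your explicit induction on $|H|$, the identity $\bigcup_M \mathcal S(M\act Y)=\mathcal S(H\act Y)\setminus\{H\act\prim(H)\}$, the compatibility of $\eceq$ with unions of condition sets, and the degenerate cases (fixed point, $Y=\emptyset$) only make precise what the paper's short proof leaves implicit.
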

\begin{proof}
    If $H\act Y$ has a fixed point, then both conditions are trivial.
    By Lemma~\ref{LemmaOneStep}, we get that 
    $$
        \condition(H\act Y) \eceq \{H \act \prim(H)\}\cup\{(M\act Y\mid M\le H\text{ maximal subgroup}\}.
    $$
    We can remove all groups $M$ that have a fixed point and inductively apply Lemma~\ref{LemmaOneStep} again on the others until all groups are trivial. Note that if $S\le H$ has no fixed point in $Y$, also every supergroup of $S$ has no fixed point.
    Thus, we get exactly a condition $S \act \prim(S)$ for all subgroups $S$ without a fixed point. 
\end{proof}

\subsection{Classification of general actions}
\label{SectionClassificationsGeneral}
We have now proven all parts to describe the full classification.

    \begin{figure}
        \centering
 	\begin{tikzpicture}
		[group/.style={rounded corners=3mm,draw,thick},
		cloud/.style={circle,y radius = 5cm,draw,thick},
		]
		\node (A) at ( -1,0) [group] {$\IZ/1$};
		\node (B1) at ( -3,-1) [group] {$\IZ/2$};
		\node (B2) at ( -1,-1) [group] {$\IZ/3$};
		\node (B3) at (  1,-1) [group] {$\IZ/5$};
		\node (B4) at (  2,-1) [group] {$\IZ/7$};
		\node (B5) at (  3,-1) [group] {$A_5$};
		\node (C0) at ( -4.5,-2) [group] {$(\IZ/2)^2$};
		\node (C1) at ( -3.4,-2) [group] {$S_3$};
		\node (C2) at ( -2.5,-2) [group] {$S_5$};
		\node (C3) at ( -1.5,-2) [group] {$\IZ/6$};
		\node (C4) at ( -0.6,-2) [group] {$A_4$};
		\node (C5) at ( +0.6,-2) [group] {$(\IZ/3)^2$};
		\draw (A.south)--(B1.north);
		\draw (A.south)--(B2.north);
		\draw (A.south)--(B3.north);
		\draw (A.south)--(B4.north);
		\draw (A.south)--(B5.north);
		\draw (B1.south)--(C0.north);
		\draw (B1.south)--(C1.north);
		\draw (B1.south)--(C2.north);
		\draw (B1.south)--(C3.north);
		\draw (B2.south)--(C3.north);
		\draw (B2.south)--(C4.north);
		\draw (B2.south)--(C5.north);
		\draw[dashed] (-5,0) .. controls (-4,-2) and (-1,-2) .. 
		(2,0);
        \draw node at (-3.5,-0.2) { $\ppchar(\IZ/6\act \IZ/6)$};
		\draw[dotted] (-5.2,-1.8) .. controls (-4,0) and (-1,0) .. 
		(2,-2);
        \draw node at (3,-2.3) { $\condchar(\IZ/6\act \IZ/6)$};
	\end{tikzpicture}
       \caption{The epimorphism poset on some finite groups with trivial Frattini subgroup. The group action $\IZ/6\act \IZ/6$ is ppg-interconstructable with the product of $G\act \prim(G)$ where $G$ is inside $\ppchar(\IZ/6\act \IZ/6)$ that is the set of all groups above the dashed line.
       The condition $\condition(\IZ/6\act \IZ/6)$ is satisfied whenever $\condition(G\act \prim(G))$ is satisfied for all groups in $\condchar(\IZ/6\act \IZ/6)$ that are all groups below the dotted line. As this set is infinite, we consider the set $\condcharred(\IZ/6\act \IZ/6)$ instead which consists only of the finitely many elements directly below the dotted line.
       }
        \label{figure_epiPoset}
    \end{figure}
	
Both the ppg-construction poset and the elementary implication poset can be seen as comparison of subsets of the epimorphism poset of groups, see Figure~\ref{figure_epiPoset} for an example. The details will be explained in this section.

\subsubsection{ppg-constructions}
We start as usually with the ppg-constructions. 
\begin{definition}
	For a group action $G\act X$, let its ppg-construction characteristic $\ppchar(G\act X)$ be the set of all groups $G'$ such that there exists $N\normalsubgroup H\le G$ and
	\begin{enumerate}
		\item $G'$ is isomorphic to the subquotient $H/N$.
		\item The subgroup action $H\act X$ is a minimal fixed point free action.
		\item $\frat(G')$ is trivial.
	\end{enumerate}
\end{definition}

\begin{remark}
    As $G'$ needs to be a subquotient in the previous definition, $\ppchar(G\act X)$ contains only a finite number of groups up to isomorphism. As quotients of groups with trivial Frattini group also have trivial Frattini group, this set is also closed under epimorphisms.
    
    It can be any finite set of groups with trivial Frattini subgroup that is closed under epimorphisms.

    Precisely speaking, $\ppchar(G\act X)$ is a proper class. However, we usually consider it as finite set by identifying isomorphic groups.

    It is easy to see that 
    $$\ppchar(G\act X) = \ppchar(\prod_{G'\in \ppchar(G\act X)}(G'\act \prim(G')))  $$   
    as maximal fixed-point free subgroups of the product are maximal fixed-point free subgroups of a factor.
\end{remark}
\begin{lemma} \label{LemmaPPCharCharacterisation}
    Let $G\act X$ be a group action. Then, the following classes of groups are equal:
    \begin{enumerate}
        \item $\ppchar(G\act X)$
        \item $\{H\mid \frat(H)=\{1\} \land G\act X \ppgle H\act\prim(H)\}$
        \item $\{H\mid \frat(H)=\{1\} \land \Pol_G(X) \not\models \condition(H\act\prim(H))\}$
    \end{enumerate}
\end{lemma}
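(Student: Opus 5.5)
I would prove the three-way equality by showing that the classes in (1) and (2) coincide, and then that the classes in (2) and (3) coincide. Throughout, $H$ ranges over groups with $\frat(H)=\{1\}$, so we may freely replace $H$ by $H/\frat(H)$.

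\textbf{(2) $\Leftrightarrow$ (3).} This should follow from results already in place.

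If $G\act X \ppgle H\act\prim(H)$, then Lemma~\ref{LemmaPPConstructionsPreserveMinorConditions} transfers loop conditions from $\Pol_G(X)$ to $\Pol_H(\prim(H))$. By Lemma~\ref{LemmaNoSelfLoop} we have $\Pol_H(\prim(H))\not\models\condition(H\act\prim(H))$, and Lemma~\ref{LemmaMinimalFixedPointFreeHomEquiv} gives that $\prim(H)$ has no fixed point. Hence $\Pol_G(X)\not\models\condition(H\act\prim(H))$.

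Conversely, assume $\Pol_G(X)\not\models\condition(H\act\prim(H))$. I would apply Theorem~\ref{TheoremConstructingGroupActionA} with $G'\act X' = H\act\prim(H)$, via the implication \ref{NrTheoremConstructingGroupActionA4}$\Rightarrow$\ref{NrTheoremConstructingGroupActionA1}. Since $H\act\prim(H)$ is minimal fixed-point free, the only subgroup $K\le H$ with $K\act\prim(H)$ minimal fixed-point free is $K=H$ itself: proper subgroups have fixed points, and $H$ has none. So the hypothesis of \ref{NrTheoremConstructingGroupActionA4} is exactly our assumption, and the theorem yields $G\act X\ppgle H\act\prim(H)$. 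The case $H=\{1\}$, with $\prim(\{1\})=\emptyset$, is covered by the conventions of Section~\ref{SectionEmptyAction}.

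\textbf{(1) $\Rightarrow$ (2).} Let $G'\iso K/N$ with $K\le G$ such that $K\act X$ is minimal fixed-point free, and $\frat(G')=\{1\}$.
\begin{enumerate}
\item Lemma~\ref{LemmaGeneralToPrimitiveA} gives $G\act X\ppgle K\act\prim(K)$.
\item The quotient map $K\twoheadrightarrow K/N\iso G'=G'/\frat(G')$ is an epimorphism, so Theorem~\ref{TheoremFrattiniActionPoset} gives $K\act\prim(K)\ppgle G'\act\prim(G')$.
\item Transitivity, Lemma~\ref{LemmaPPConstructionsTransitive}, then gives $G\act X\ppgle G'\act\prim(G')$.
\end{enumerate}

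\textbf{(2) $\Rightarrow$ (1).} This is the main obstacle. Assume $\frat(H)=1$ and $\Pol_G(X)\not\models\condition(H\act\prim(H))$.
\begin{enumerate}
\item By Lemma~\ref{LemmaConditionElementary}, there is $t\in X^{\prim(H)}$ such that $S:=\stab_G(H.t)$ has no fixed point on $X$.
\item Since $S$ is fixed-point free, we can choose $K\le S$ minimal with $K\act X$ fixed-point free; then $K\act X$ is minimal fixed-point free.
\item I would then imitate the proof of Lemma~\ref{LemmaPrimitiveClassificationToEpi}. First, $\stab_H(K.t)=H$: otherwise $\stab_H(K.t)$ is a proper subgroup and fixes some $y\in\prim(H)$, and then $t(y)$ is fixed by $K$, which is a contradiction.
\item Next, $\stab_K(H.t)=K$, because $K\le S$.
\item Lemma~\ref{LemmaIsoOfSubquotient} then gives $K/\stab_K(t)\iso H/\stab_H(t)$.
\item Finally, I would argue $\stab_H(t)=\frat(H)=1$ as in that lemma. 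The subtlety is that the stabiliser of $t(y)$ in $K$ need not be maximal in $K$. To avoid this, I would first replace $t$ by its composite with a $K$-homomorphism $X\to\prim(K)$, which exists by Lemma~\ref{LemmaMinimalFixedPointFreeHomEquiv}. The replacement preserves both stabiliser conditions, and it turns the point stabilisers into maximal subgroups, so the maximal-subgroup argument goes through.
\end{enumerate}
Alternatively, $H$ is then a quotient of $K$, so $H\in\ppchar(G\act X)$ directly. This directly exhibits $H$ as a quotient of the minimal fixed-point free subgroup $K$ of $G$, as required in (1).
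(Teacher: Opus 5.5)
Your proof is correct. The inclusions (1)$\subseteq$(2) and (2)$\subseteq$(3) match the paper's, but for the substantive inclusion of class (3) in class (1) you take a genuinely different route.

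The paper argues by contraposition. For $H\notin\ppchar(G\act X)$, Lemma~\ref{LemmaPrimitiveClassificationToEpi} (used only between Frattini actions) gives $\Pol_S(\prim(S))\models\condition(H\act\prim(H))$ for every $S\in\ppchar(G\act X)$. Hence the external product $\prod_S S\act\prod_S\prim(S)$ satisfies the condition componentwise. Since this product ppg-constructs $G\act X$ by Theorem~\ref{TheoremConstructingGroupActionA}, the condition transfers to $\Pol_G(X)$.

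You instead generalise the proof of Lemma~\ref{LemmaPrimitiveClassificationToEpi} itself to an arbitrary $G\act X$. From the witness $t$ you pass to a minimal fixed-point-free $K\le\stab_G(H.t)$, and then to $\psi\circ t$ for a $K$-homomorphism $\psi\colon X\to\prim(K)$. Because $\psi$ commutes with the $K$-action, $\stab_K(H.(\psi\circ t))$ and $\stab_H(K.(\psi\circ t))$ can only grow, so both stay full, while the relevant point stabilisers in $K$ become maximal. This yields an explicit epimorphism $K\twoheadrightarrow H$ with kernel $\stab_K(\psi\circ t)$. It needs neither the product nor Theorem~\ref{TheoremConstructingGroupActionA} for this inclusion.

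Your route also sidesteps a degenerate case in the paper's argument. The trivial group $\{1\}$ always lies in a nonempty $\ppchar(G\act X)$ (take $N$ to be the whole minimal fixed-point-free subgroup). So the paper's product has the factor $\prim(\{1\})=\emptyset$ and is therefore an empty action. By the conventions of Section~\ref{SectionEmptyAction}, it cannot ppg-construct a fixed-point-free $G\act X$ on a nonempty set unless the trivial group is discarded first.

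In exchange, the paper's route reuses the Frattini case as a black box and builds the product representative that the later classification theorems rely on. Your additional direct derivation of (3)$\Rightarrow$(2) from Theorem~\ref{TheoremConstructingGroupActionA} is also valid; it uses that $H$ is the only subgroup acting minimally fixed-point freely on $\prim(H)$. It is redundant, though, once you have (3)$\Rightarrow$(1)$\Rightarrow$(2).
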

\begin{proof}
    Each of the groups in $\ppchar(G\act X)$ have trivial Frattini subgroup by definition. Thus, this condition is in all three descriptions and can be ignored.

    The first class is contained in the second: By Theorem~\ref{TheoremConstructingGroupActionA}, implication \ref{NrTheoremConstructingGroupActionA1} to \ref{NrTheoremConstructingGroupActionA2}, $G_1\act X_1$ ppg-constructs $H\act \prim(H)$ for all subgroups $H\le G_2$ that are minimal fixed-point free. By Lemma~\ref{LemmaPrimitiveClassificationIfDirection}, they also ppg-construct all group actions $H\act \prim(H)$ for $H\in \ppchar(G_2\act X_2)$.

    The second class is contained in the third by Lemma~\ref{LemmaNoSelfLoop} as pp-constructions preserve loop conditions.
    
    The third class is contained in the first one: 
    Consider any $H$ not in $\ppchar(G\act X)$ and any $S\in \ppchar(G\act X)$. Then, we get 
    $$
        \Pol_S\prim(S)\models\condition(H \act \prim(H))
    $$
    by Lemma~\ref{LemmaPrimitiveClassificationToEpi}.
    Consider now the external product action $\prod_{S \in \ppchar(G\act X)} S \act \prod_{S \in \ppchar(G\act X)} \prim(S)$. This action admits a $\condition(H \act \prim(H))$-polymorphism, by applying the maps component wise. Moreover, it ppg-constructs $G\act X$ by Theorem~\ref{TheoremConstructingGroupActionA}. Thus, we get 
    $\Pol_G(X) \models \condition(H\act\prim(H))$ and $H$ is not in the third set.
\end{proof}

\begin{lemma}
    A group action $G\act X$ is ppg-interconstructable with the product $\prod_{G'\in \ppchar(G)} (G'\act \prim(G'))$.
\end{lemma}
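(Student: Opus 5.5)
We show both directions of ppg-construction separately. Write $P$ for the external product action $\prod_{G'\in \ppchar(G\act X)} (G'\act \prim(G'))$, where $\ppchar(G\act X)$ is read as a finite set of representatives up to isomorphism, as in the preceding remark.

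\emph{$G\act X$ ppg-constructs $P$.} Each $G'\in\ppchar(G\act X)$ satisfies $G\act X \ppgle G'\act\prim(G')$ by Lemma~\ref{LemmaPPCharCharacterisation}, since the first and second classes there coincide. The second part of the proof of Lemma~\ref{LemmaPPConstructionsTransitive} shows that constructing every factor separately suffices to construct their external product. This gives the first direction. If $\ppchar(G\act X)$ is empty, $P$ is $\{1\}\act\{1\}$, which is obtained as the $0$-th full power.

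\emph{$P$ ppg-constructs $G\act X$.} We use Theorem~\ref{TheoremConstructingGroupActionA}, implication \ref{NrTheoremConstructingGroupActionA3} $\Rightarrow$ \ref{NrTheoremConstructingGroupActionA1}, with $P$ as the constructing action and $G\act X$ as the target. So let $H\le G$ be such that $H\act X$ is minimal fixed-point free. We must show $\Pol_P \not\models \condition(H\act\prim(H))$.

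By Lemma~\ref{LemmaFrattiniQuotient} and Lemma~\ref{LemmaPPConstructionsPreserveMinorConditions}, the condition $\condition(H\act\prim(H))$ is interchangeable with $\condition(H/\frat(H)\act\prim(H/\frat(H)))$ here. Indeed, the two Frattini actions are ppg-interconstructible, hence homomorphically equivalent after the subgroup/quotient step, and Lemma~\ref{LemmaElImplImplyImply} applies via the quotient and subgroup steps of an elementary implication. Moreover, $H/\frat(H)$ is a subquotient of $G$ with trivial Frattini subgroup and with $H\act X$ minimal fixed-point free, so $H/\frat(H)\in\ppchar(G\act X)$.

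Hence $P$ has $K\act\prim(K)$ as a factor, where $K\iso H/\frat(H)$. Projecting onto that factor is a ppg-construction: take the subgroup of $\prod G'$ given by that factor, then the quotient sub-action by the complementary factors. Concretely, the complementary product $\prod_{G'\ne K} G'$ is normal, its fixed points are tuples whose other coordinates are fixed by the other groups, and these tuples are homomorphically equivalent to the factor. This needs some care when other factors have no fixed points. In that case one instead observes directly that a polymorphism of $P$ invariant under $H$ would project (fix the other coordinates arbitrarily, apply the polymorphism, read off the $K$ coordinate) to an $H$-invariant polymorphism of $K\act\prim(K)$. That is impossible by Lemma~\ref{LemmaNoSelfLoop} combined with the equivalence of the conditions above. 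Condition \ref{NrTheoremConstructingGroupActionA3} therefore holds, and the theorem yields $P\ppgle G\act X$.

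\emph{Main obstacle.} The delicate step is this projection argument, that is, transferring failure of a loop condition from a factor to the whole product. I would carry it out via the direct componentwise argument just described rather than through ppg-constructions. Composed with the projection map, it sends a fixed polymorphism of the product to one of the factor, using that products of $G$-sets are componentwise. The degenerate cases, where $X$ has a fixed point so that $\ppchar$ is empty and both sides are trivial, or where some factor is $\{1\}\act\emptyset$, are checked separately using Section~\ref{SectionEmptyAction}.
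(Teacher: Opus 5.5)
Your route is essentially the paper's: Theorem~\ref{TheoremConstructingGroupActionA} in both directions, together with identifying $H\act\prim(H)$ with $H/\frat(H)\act\prim(H/\frat(H))$. However, the case you set aside as degenerate is exactly where the argument breaks.

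By definition, $\{1\}\iso H/H$ lies in $\ppchar(G\act X)$ as soon as some $H\le G$ acts minimal fixed-point free on $X$, i.e.\ whenever $X$ has no fixed point. Figure~\ref{figure_epiPoset} indeed lists $\IZ/1$ in $\ppchar(\IZ/6\act\IZ/6)$. So for every nonempty fixed-point-free $G\act X$, the product contains the factor $\{1\}\act\prim(\{1\})=\{1\}\act\emptyset$ and therefore acts on the empty set. This has three consequences:
\begin{itemize}
\item There are no other coordinates to fix in your projection.
\item $\Pol$ of the product satisfies $\condition(H\act\prim(H))$ vacuously via the empty map.
\item By Section~\ref{SectionEmptyAction}, an action on the empty set ppg-constructs only actions with a fixed point.
\end{itemize}
For example, $\ppchar(\IZ/2\IZ\act\IZ/2\IZ)=\{\IZ/2\IZ,\{1\}\}$, so the product is $\IZ/2\IZ\act\emptyset$, which cannot construct $\IZ/2\IZ\act\IZ/2\IZ$. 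So this case cannot be ``checked separately''. Read literally, the statement is false there, and the paper's one-line proof passes over the same point.

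The fix is to drop the trivial factor when $X\neq\emptyset$, i.e.\ to take the product over $\ppchar(G\act X)\setminus\{\{1\}\}$. Then every factor has $\prim(G')\neq\emptyset$. Your componentwise projection (fix the remaining coordinates, apply the polymorphism, read off the $K$-coordinate) then correctly turns an $H$-invariant polymorphism of the product into one of $K\act\prim(K)$. The needed factor is also present: $X\neq\emptyset$ forces each minimal fixed-point-free $H$ to be nontrivial, hence $\frat(H)<H$ and $K\iso H/\frat(H)\neq\{1\}$. The forward direction is unaffected. The remaining edge cases are fine: $X$ with a fixed point gives $\{1\}\act\{1\}$, and $X=\emptyset$ gives $\{1\}\act\emptyset$.

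Separately, your first description of the projection as a ppg-construction (quotient sub-action by the complementary factors) does not work. Nontrivial Frattini actions have no fixed points, so that fixed-point set is empty. The correct construction is the subgroup action along $K\hookrightarrow\prod G'$ followed by a homomorphic equivalence, which again needs the other factors to be nonempty.
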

\begin{proof}
    This is a combination of Theorem~\ref{TheoremConstructingGroupActionA} and Theorem~\ref{TheoremFrattiniActionPoset}.
\end{proof}

\begin{theorem}[Classification of ppg-constructability]
    \label{TheoremMainPPConstruction}
    For two group actions $G_1\act X_1$ and $G_2\act X_2$, the following are equivalent:
\begin{enumerate}
        \item \label{NrMainPPConstruction1}
        $G_1\act X_1$ ppg-constructs $G_2\act X_2$.
        \item \label{NrMainPPConstruction4}
        $\ppchar(G_2\act X_2)\subseteq \ppchar(G_1\act X_1)$.
        \item \label{NrMainPPConstruction2}
        $\Pol_{G_2}(X_2)$ satisfies every loop condition satisfied by $\Pol_{G_1}(X_1)$.
        \item \label{NrMainPPConstruction3}
        $G_1\act X_1$ ppg-constructs $H\act \prim(H)$ for all $H\in \ppchar(G_2\act X_2)$.
        \item \label{NrMainPPConstruction5}
        every subquotient of $G_2$, where the subgroup acts minimal fixed-point free on $X_2$, that has a trivial Frattini subgroup is isomorphic to a subquotient of $G_1$, where the subgroup acts minimal fixed-point free on $X_1$.
        \item \label{NrMainPPConstruction6}
        $\Pol_{G_2}(X_2)$ satisfies every loop condition satisfied by $\Pol_{G_1}(X_1)$ that is of the form $\condition(H\act \prim(H))$ for a group $H$ with additionally $\frat(H)=\{1\}$.
    \end{enumerate}
\end{theorem}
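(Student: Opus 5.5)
We prove the six statements equivalent by a cycle of implications, with one side branch, leaning on Lemma~\ref{LemmaPPCharCharacterisation} and Theorem~\ref{TheoremConstructingGroupActionA}.

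\emph{\ref{NrMainPPConstruction1}$\Rightarrow$\ref{NrMainPPConstruction2}.} This is Lemma~\ref{LemmaPPConstructionsPreserveMinorConditions}.

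\emph{\ref{NrMainPPConstruction2}$\Rightarrow$\ref{NrMainPPConstruction6}.} Trivial, since \ref{NrMainPPConstruction6} only restricts the loop conditions considered.

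\emph{\ref{NrMainPPConstruction6}$\Rightarrow$\ref{NrMainPPConstruction4}.} Use the third description in Lemma~\ref{LemmaPPCharCharacterisation}. If $H\in\ppchar(G_2\act X_2)$, then $\frat(H)=\{1\}$ and $\Pol_{G_2}(X_2)\not\models\condition(H\act\prim(H))$. By the contrapositive of \ref{NrMainPPConstruction6}, also $\Pol_{G_1}(X_1)\not\models\condition(H\act\prim(H))$, hence $H\in\ppchar(G_1\act X_1)$.

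\emph{\ref{NrMainPPConstruction4}$\Leftrightarrow$\ref{NrMainPPConstruction5}.} Statement \ref{NrMainPPConstruction5} is a verbatim unfolding of the definition of $\ppchar$: a subquotient $S/N$ with $S$ acting minimal fixed-point free and $\frat(S/N)$ trivial. Both sets are taken up to isomorphism, so nothing needs to be done beyond unwinding the definition.

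\emph{\ref{NrMainPPConstruction4}$\Rightarrow$\ref{NrMainPPConstruction3}.} Use the second description in Lemma~\ref{LemmaPPCharCharacterisation}: every $H\in\ppchar(G_1\act X_1)$ satisfies $G_1\act X_1\ppgle H\act\prim(H)$, and the inclusion gives this for all $H\in\ppchar(G_2\act X_2)$.

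\emph{\ref{NrMainPPConstruction3}$\Rightarrow$\ref{NrMainPPConstruction1}.} This is the step with actual content. By Theorem~\ref{TheoremConstructingGroupActionA}, \ref{NrTheoremConstructingGroupActionA2}$\Rightarrow$\ref{NrTheoremConstructingGroupActionA1}, it suffices that $G_1\act X_1$ ppg-constructs $K\act\prim(K)$ for every $K\le G_2$ acting minimal fixed-point free on $X_2$. Here $K$ itself need not have trivial Frattini subgroup, so we pass to $K/\frat(K)$. Since $K$ is a subgroup and $\frat(K)$ is normal in $K$, this is a subquotient of the type in the definition, so $K/\frat(K)\in\ppchar(G_2\act X_2)$. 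By \ref{NrMainPPConstruction3}, $G_1\act X_1$ ppg-constructs $K/\frat(K)\act\prim(K/\frat(K))$, which is ppg-interconstructible with $K\act\prim(K)$ by Lemma~\ref{LemmaFrattiniQuotient}. Transitivity (Lemma~\ref{LemmaPPConstructionsTransitive}) then finishes the argument.

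\emph{Obstacle.} The only point needing care is that $\ppchar$ contains only Frattini-free groups while Theorem~\ref{TheoremConstructingGroupActionA} ranges over all minimal fixed-point free subgroups. The $K/\frat(K)$ reduction above bridges this gap.

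Together these give the cycle \ref{NrMainPPConstruction1}$\Rightarrow$\ref{NrMainPPConstruction2}$\Rightarrow$\ref{NrMainPPConstruction6}$\Rightarrow$\ref{NrMainPPConstruction4}$\Rightarrow$\ref{NrMainPPConstruction3}$\Rightarrow$\ref{NrMainPPConstruction1}, with \ref{NrMainPPConstruction5} equivalent to \ref{NrMainPPConstruction4}.
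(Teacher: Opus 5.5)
Your proof is correct. It uses the same two ingredients as the paper (Lemma~\ref{LemmaPPCharCharacterisation} and Theorem~\ref{TheoremConstructingGroupActionA}), but you close the cycle at a different place.

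The paper proves \ref{NrMainPPConstruction1}$\Rightarrow$\ref{NrMainPPConstruction3}$\Rightarrow$\ref{NrMainPPConstruction4}$\Rightarrow$\ref{NrMainPPConstruction6} and \ref{NrMainPPConstruction1}$\Rightarrow$\ref{NrMainPPConstruction2}$\Rightarrow$\ref{NrMainPPConstruction6}. It then goes straight from \ref{NrMainPPConstruction6} back to \ref{NrMainPPConstruction1}. Applying Theorem~\ref{TheoremConstructingGroupActionA} (\ref{NrTheoremConstructingGroupActionA1}$\Rightarrow$\ref{NrTheoremConstructingGroupActionA3}) to $G_2\act X_2$ gives $\Pol_{G_2}(X_2)\not\models\condition(H\act\prim(H))$ for every minimal fixed-point free $H\le G_2$. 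Hypothesis \ref{NrMainPPConstruction6} transfers this to $\Pol_{G_1}(X_1)$, and \ref{NrTheoremConstructingGroupActionA3}$\Rightarrow$\ref{NrTheoremConstructingGroupActionA1} finishes.

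You instead read \ref{NrMainPPConstruction6}$\Rightarrow$\ref{NrMainPPConstruction4} off the third description of $\ppchar$. You then return to \ref{NrMainPPConstruction1} from \ref{NrMainPPConstruction3} via the construction form \ref{NrTheoremConstructingGroupActionA2}$\Rightarrow$\ref{NrTheoremConstructingGroupActionA1}.

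What each route buys:
\begin{itemize}
\item \emph{Paper's route.} The equivalence of \ref{NrMainPPConstruction1}, \ref{NrMainPPConstruction2} and \ref{NrMainPPConstruction6} does not depend on Lemma~\ref{LemmaPPCharCharacterisation} at all. Your cycle routes it through that lemma, in particular through its inclusion of the third class into the first.
\item \emph{Your route.} It handles the Frattini mismatch explicitly. The paper applies \ref{NrMainPPConstruction6} to all minimal fixed-point free $H\le G_2$, but such $H$ need not be Frattini-free. For example, $\IZ/4\IZ$ acting on the two cosets of its subgroup of order two is minimal fixed-point free, yet $\frat(\IZ/4\IZ)\neq\{1\}$. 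So the paper's step silently needs the remark that $\condition(H\act\prim(H))$ and $\condition(H/\frat(H)\act\prim(H/\frat(H)))$ are the same condition, because $\frat(H)$ acts trivially on $\prim(H)$. Your passage from $K$ to $K/\frat(K)$ via Lemma~\ref{LemmaFrattiniQuotient} makes this point visible.
\end{itemize}
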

\begin{proof}
    \ref{NrMainPPConstruction1} implies \ref{NrMainPPConstruction2} by Lemma~\ref{LemmaPPConstructionsPreserveMinorConditions}.

    \ref{NrMainPPConstruction1} implies \ref{NrMainPPConstruction3} by Lemma~\ref{LemmaPPCharCharacterisation} as ppg-constructable is a transitive relation.

    \ref{NrMainPPConstruction3} implies \ref{NrMainPPConstruction4} by Lemma~\ref{LemmaPPCharCharacterisation}.

    \ref{NrMainPPConstruction4} is equivalent to \ref{NrMainPPConstruction5} by definition.

    \ref{NrMainPPConstruction2} implies \ref{NrMainPPConstruction6} is clear.

    \ref{NrMainPPConstruction4} implies \ref{NrMainPPConstruction6} by Lemma~\ref{LemmaPPCharCharacterisation} and a contraposition.

    \ref{NrMainPPConstruction6} implies \ref{NrMainPPConstruction1}: We know that $G_2\act X_2$ ppg-constructs itself. By Theorem~\ref{TheoremConstructingGroupActionA}, implication \ref{NrTheoremConstructingGroupActionA1} to \ref{NrTheoremConstructingGroupActionA3}, we get that 
    $$\Pol_{G_2}(X_2) \not\models \condition(H \act \prim(H))$$
    for all $H\le G_2$ such that $H\act X_2$ is minimal fixed-point free. 
    By assumption \ref{NrMainPPConstruction6}, we get that these conditions are also not true in $\Pol_{G_1}(X_1)$. 
    Thus, we can apply Theorem~\ref{TheoremConstructingGroupActionA}, implication \ref{NrTheoremConstructingGroupActionA3} to \ref{NrTheoremConstructingGroupActionA1}, and get that $G_1\act X_1$ ppg-constructs $G_2\act X_2$.
\end{proof}
\begin{theorem}[Classification of group actions up to ppg-interconstructability]
    The ppg-interconstructability defines an equivalence relation on finite group actions.
    Two group actions $G_1\act X_1$ and $G_2\act X_2$ ppg-construct each other if and only if $\ppchar(G_2\act X_2) = \ppchar(G_1\act X_1)$.

    A representation system of these equivalence classes is given by group actions of the form $\prod_{G'\in M} G' \act \prod_{G'\in M}\prim(G')$ where $M$ is a finite set of finite groups with trivial Frattini subgroup that is closed under epimorphisms. The representative of the equivalence class of $G\act X$ is given by the external direct product $\prod_{G'\in \ppchar(G\act X)} G' \act \prod_{G'\in \ppchar(G\act X)}\prim(G'))$.
\end{theorem}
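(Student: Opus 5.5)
The plan is to derive everything from Theorem~\ref{TheoremMainPPConstruction}. Reflexivity of ppg-interconstructability is trivial: take the first full power, the identity subgroup action and the trivial quotient. Symmetry holds by definition. Transitivity follows from Lemma~\ref{LemmaPPConstructionsTransitive} with $|I|=1$. So ppg-interconstructability is an equivalence relation.

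The characterization of mutual ppg-construction is immediate from the equivalence \ref{NrMainPPConstruction1}$\iff$\ref{NrMainPPConstruction4} of Theorem~\ref{TheoremMainPPConstruction}, applied in both directions. Each direction gives one inclusion of the characteristics, and together they give equality.

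For the representation system, fix a finite set $M$ (up to isomorphism) of groups with trivial Frattini subgroup, closed under epimorphisms. Put $P_M=\prod_{G'\in M} G' \act \prod_{G'\in M}\prim(G')$. The key step is to compute $\ppchar(P_M)=M$.

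\emph{The inclusion $M\subseteq \ppchar(P_M)$.} For $G'\in M$, the factor subgroup $G'$, embedded in the product with trivial other coordinates, acts on the product set. It fixes the other coordinates and acts as $G'\act\prim(G')$ on its own coordinate, so this action is homomorphically equivalent to $G'\act\prim(G')$. By Lemma~\ref{LemmaMinimalFixedPointFreeHomEquiv} it is minimal fixed-point free: a proper subgroup fixes a point in the $G'$ coordinate, and we can pair that with any point in the remaining coordinates. Hence $G'=G'/\{1\}$ lies in $\ppchar(P_M)$. For the empty-set subtleties, note that $\prim(\{1\})=\emptyset$ makes the whole product empty. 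I would handle the case $\{1\}\in M$ separately via the conventions of Section~\ref{SectionEmptyAction}, where the minimal fixed-point free subgroup is $\{1\}$.

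\emph{The inclusion $\ppchar(P_M)\subseteq M$.} This is the main obstacle. It is the claim in the Remark that minimal fixed-point free subgroups of the product are essentially subgroups of a factor. I would prove it via Lemma~\ref{LemmaPPCharCharacterisation}. If $H\notin M$ has trivial Frattini subgroup, then no $S\in M$ surjects onto $H$: otherwise $H$, as a quotient of $S$, would lie in $M$ by closure under epimorphisms. Theorem~\ref{TheoremFrattiniActionPoset} then gives $\Pol_S(\prim(S))\models\condition(H\act\prim(H))$ for every $S\in M$. Taking these witnessing polymorphisms coordinatewise yields a $\condition(H\act\prim(H))$-polymorphism of $P_M$, so $H\notin\ppchar(P_M)$ by characterization (3) of Lemma~\ref{LemmaPPCharCharacterisation}.

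Having $\ppchar(P_M)=M$, distinct admissible $M$ give non-equivalent representatives, and each such $M$ occurs. Every action $G\act X$ is equivalent to $P_{\ppchar(G\act X)}$. This follows because $\ppchar(G\act X)$ is finite up to isomorphism and closed under epimorphisms (Remark), so $\ppchar(P_{\ppchar(G\act X)})=\ppchar(G\act X)$, and we conclude by the second paragraph. This is consistent with the preceding lemma stating interconstructability with that product.
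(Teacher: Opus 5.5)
\textbf{Verdict: genuine gap.} Your first two paragraphs (the equivalence relation, and equality of $\ppchar$ via Theorem~\ref{TheoremMainPPConstruction}) are correct and match the paper. The gap is in the case you set aside when computing $\ppchar(P_M)=M$.

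\emph{Why the deferred case cannot be handled.} The trivial group has trivial Frattini subgroup, and every group surjects onto it. So every non-empty $M$ that is closed under epimorphisms contains $\{1\}$. Your ``separate case'' $\{1\}\in M$ is therefore every case except $M=\emptyset$. In that case the factor $\prim(\{1\})=\emptyset$ makes $P_M$ an action on the empty set. Your step ``pair that with any point in the remaining coordinates'' then has nothing to pair with. Worse, the conventions of Section~\ref{SectionEmptyAction} that you want to invoke give the opposite conclusion. On $\emptyset$ the only minimal fixed-point free subgroup is $\{1\}$, so $\ppchar(P_M)=\{\{1\}\}$, which differs from $M$ as soon as $M$ contains a nontrivial group.

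So the representation claim is false as literally stated. For instance, $\ppchar(\IZ/6\act\IZ/6)=\{\{1\},\IZ/2,\IZ/3\}$. The prescribed representative acts on $\emptyset\times\prim(\IZ/2)\times\prim(\IZ/3)=\emptyset$, so it is equivalent to $\{1\}\act\emptyset$. But $\{1\}\act\emptyset$ ppg-constructs only actions with a fixed point, and $\IZ/6\act\IZ/6$ has none. The paper's own proof has the same defect, at the step ``this product ppg-constructs $G\act X$''.

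\emph{The fix.} Index the product by the nontrivial members only:
\begin{itemize}
\item if $M$ contains a nontrivial group, use $\prod_{G'\in M\setminus\{\{1\}\}} G'\act\prod_{G'\in M\setminus\{\{1\}\}}\prim(G')$;
\item for $M=\{\{1\}\}$, use $\{1\}\act\emptyset$;
\item for $M=\emptyset$, use the one-point trivial action.
\end{itemize}
With that change your argument is complete. All factors are now non-empty, so your pairing argument shows that each nontrivial $G'\in M$ lies in $\ppchar$; then $\{1\}$ does too, by taking $N=H$ in the definition. Your proof of the reverse inclusion, via Lemma~\ref{LemmaPPCharCharacterisation} and Theorem~\ref{TheoremFrattiniActionPoset} with coordinatewise polymorphisms, works unchanged.

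\emph{Comparison with the paper.} The paper only shows that $G\act X$ and its representative construct each other. It leaves the realizability of every admissible $M$, and the pairwise inequivalence of the representatives, to the Remark following the definition of $\ppchar$. Your identity $\ppchar(P_M)=M$, once corrected, settles all parts of the representation-system claim at once.
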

\begin{proof}
    The ppg-interconstructability is clearly reflexive and symmetric. It is transitive by Lemma~\ref{LemmaPPConstructionsTransitive} and thus an equivalence relation. It is equivalent to the equality on $\ppchar$ as ppg-constructability translates to inclusion by Theorem~\ref{TheoremMainPPConstruction}.

    The representation system follows as $G\act X$ ppg-constructs $G'\act \prim(G')$ for all $G\in \ppchar(G\act X)$ by Lemma~\ref{LemmaPPCharCharacterisation} and thus also their product. Conversely, this product ppg-constructs $G\act X$ by Theorem~\ref{TheoremMainPPConstruction}, implication \ref{NrMainPPConstruction3} to \ref{NrMainPPConstruction1}.
\end{proof}

\begin{corollary}
    Every group action ppg-constructs only finitely many group actions up to ppg-interconstructability.
\end{corollary}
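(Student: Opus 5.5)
By Theorem~\ref{TheoremMainPPConstruction}, the ppg-interconstructability class of a group action $G'\act X'$ is determined entirely by $\ppchar(G'\act X')$. So it suffices to show that the set
$$\{\ppchar(G'\act X') \mid G\act X \text{ ppg-constructs } G'\act X'\}$$
is finite, where each $\ppchar$ is regarded as a set of isomorphism classes of groups.

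The equivalence \ref{NrMainPPConstruction1}$\Leftrightarrow$\ref{NrMainPPConstruction4} of Theorem~\ref{TheoremMainPPConstruction} says that $G\act X$ ppg-constructs $G'\act X'$ exactly when $\ppchar(G'\act X')\subseteq \ppchar(G\act X)$. Hence every such characteristic is a subset of $\ppchar(G\act X)$.

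It remains to check that $\ppchar(G\act X)$ is finite up to isomorphism. This was noted in the remark after its definition: every member is isomorphic to a subquotient $H/N$ with $N\normalsubgroup H\le G$, and the finite group $G$ has only finitely many such pairs $(H,N)$.

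A finite set of isomorphism classes has only finitely many subsets. Therefore only finitely many distinct characteristics occur, and so $G\act X$ ppg-constructs only finitely many group actions up to ppg-interconstructability. If $\ppchar(G\act X)$ has $k$ isomorphism classes, the bound is at most $2^k$ classes. One could sharpen this to the number of epimorphism-closed subsets, but that is not needed.

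The only point needing care is that $\ppchar$ is formally a proper class. Both the comparison by inclusion and the counting must therefore be done on isomorphism classes, which is consistent with the convention stated in the remark. With that convention in place, no further work is required.
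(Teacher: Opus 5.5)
Your proposal is correct and follows essentially the same route as the paper. By Theorem~\ref{TheoremMainPPConstruction}, applied in both directions, ppg-interconstructibility amounts to equality of $\ppchar$, and every action constructed from $G\act X$ has its characteristic among the finitely many subsets of the finite (up to isomorphism) set $\ppchar(G\act X)$; the paper's one-line proof is exactly this argument.
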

\begin{proof}
    If $G\act X$ ppg-constructs $H\act Y$, then we get that $\ppchar(H\act Y)$ is one of the finitely many subsets of $\ppchar(G\act X)$. Thus, there is only a finite number of such actions $H\act Y$, that are pairwise non-ppg-interconstructable.
\end{proof}

\subsubsection{loop conditions}
A similar claim holds for the loop conditions.

\begin{definition}
    For a group action $H\act Y$ let its reduced loop condition characteristic $\condcharred(H\act Y)$ be the class of all groups isomorphic to $S/\frat(S)$ such that
    \begin{enumerate}
        \item $S\le H$ is a subgroup such that $S\act Y$ has no fixed point and
        \item there is no subgroup $S'\le H$ such that $S'\act Y$ has no fixed point and there exists an epimorphism $S/\frat(S)\to S'/\frat(S')$ which is not an isomorphism.
    \end{enumerate}
\end{definition}
\begin{remark}
    The class $\condcharred(H\act Y)$ contains only a finite number of groups up to isomorphism. Thus, we usually consider it as finite set even though it is a proper class.
    Every group in this set has a trivial Frattini subgroup and no group in this set admits an epimorphism into another group in this set.

    In contrast to $\ppchar$, which is closed under ppg-constructions, we decided to choose $\condcharred$ not to be closed under elementary implications as this would result in an infinite set which we denote $\condchar$. Instead, $\condcharred$ is reduced, that is it is designed as the minimal representation of its equivalence class, that is the inclusion smallest set, which happens to exists.

    Depending on $H$ and $Y$, the set $\condcharred(H\act Y)$ can be any finite set of groups with trivial Frattini subgroup such that no group in this set admits an epimorphism into another group in this set. 
\end{remark}
\begin{lemma}   \label{LemmaCondCharCharacterisation}
    Let $H\act Y$ be a group action. Then, the following classes of groups are equal:
    \begin{enumerate}
        \item $\{G\mid \exists N\normalsubgroup G: G/N\in \condcharred(H\act Y)\}$,
        \item $\{G\mid \Pol_G(\prim(G)) \not\models \condition(H\act Y)\}$, and
        \item $\{G \mid \condition(H\act Y) \ecle \condition(G\act \prim(G))\}$.
    \end{enumerate}
\end{lemma}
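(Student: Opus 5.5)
We show the three classes coincide via the cycle of inclusions $(1)\subseteq(3)\subseteq(2)\subseteq(1)$, reducing everything to the Frattini-action classification, Theorem~\ref{TheoremFrattiniActionPoset}, by means of Corollary~\ref{CorollaryConditionToPrimitive}.

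First the degenerate case. If $H\act Y$ has a fixed point, then no subgroup $S$ acts without fixed points, so $\condcharred(H\act Y)$ is empty and class (1) is empty. Moreover $\Pol_G(\prim(G))$ satisfies $\condition(H\act Y)$ for every $G$, since any coordinate projection onto a fixed point of $Y$ is $H$-invariant. Hence class (2) is empty, and class (3) is empty by Lemma~\ref{LemmaElImplImplyImply} together with Lemma~\ref{LemmaNoSelfLoop}. From now on assume $H\act Y$ is fixed-point free.

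\textbf{Step $(1)\subseteq(3)$.} Let $G/N\iso S/\frat(S)$ with $S\le H$ acting fixed-point freely on $Y$. By Corollary~\ref{CorollaryConditionToPrimitive}, $\condition(H\act Y)$ elementarily implies $\condition(S\act\prim(S))$; in fact it implies the whole set, and in particular each member. The composite $G\twoheadrightarrow G/N\iso S/\frat(S)$ is an epimorphism, so Lemma~\ref{LemmaEpiImpliesCondition} gives $\condition(S\act\prim(S))\ecle\condition(G\act\prim(G))$. Composing the two implications (the transitivity lemma for $\ecle$) puts $G$ in class (3).

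\textbf{Step $(3)\subseteq(2)$.} If $\condition(H\act Y)\ecle\condition(G\act\prim(G))$ and $\Pol_G(\prim(G))$ satisfied $\condition(H\act Y)$, then Lemma~\ref{LemmaElImplImplyImply} would give $\Pol_G(\prim(G))\models\condition(G\act\prim(G))$. This contradicts Lemma~\ref{LemmaNoSelfLoop}, since $\prim(G)$ has no fixed point; for $G=\{1\}$, $\prim(G)=\emptyset$ and the remark on empty actions covers this.

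\textbf{Step $(2)\subseteq(1)$.} This is the main step. Suppose $\Pol_G(\prim(G))\not\models\condition(H\act Y)$. By Corollary~\ref{CorollaryConditionToPrimitive} and Lemma~\ref{LemmaElImplImplyImply} applied in both directions, $\condition(H\act Y)$ is equivalent, as a property of minions of the form $\Pol_G(X)$, to the conjunction of the conditions $\condition(S\act\prim(S))$ over the fixed-point-free subgroups $S\le H$. This requires that satisfying each member implies satisfying the external product condition; that holds because a set elementarily implying means the product does, and the product of invariant polymorphisms can be formed componentwise. So some such $S$ has $\Pol_G(\prim(G))\not\models\condition(S\act\prim(S))$, and Lemma~\ref{LemmaPrimitiveClassificationToEpi} yields an epimorphism $G\twoheadrightarrow S/\frat(S)$.

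It remains to descend to a minimal element. Among fixed-point-free $S'\le H$ admitting an epimorphism $S/\frat(S)\twoheadrightarrow S'/\frat(S')$, choose one with $|S'/\frat(S')|$ minimal. Then $S'/\frat(S')$ admits no proper epimorphism onto some $S''/\frat(S'')$ with $S''$ fixed-point free, since composing would contradict minimality. Hence $S'/\frat(S')\in\condcharred(H\act Y)$, and composing the epimorphisms gives $N\normalsubgroup G$ with $G/N\in\condcharred(H\act Y)$.

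The main care needed is the conjunction step; the rest is bookkeeping with the earlier lemmas.
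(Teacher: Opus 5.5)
Your proof is correct and takes the same route as the paper: the cycle $(1)\subseteq(3)\subseteq(2)\subseteq(1)$ via Corollary~\ref{CorollaryConditionToPrimitive}, Lemma~\ref{LemmaEpiImpliesCondition}, Lemma~\ref{LemmaElImplImplyImply}, Lemma~\ref{LemmaNoSelfLoop} and Lemma~\ref{LemmaPrimitiveClassificationToEpi}. The differences are only in bookkeeping. You make explicit that satisfying each $\condition(S\act\prim(S))$ yields the product condition, which the paper leaves implicit. You also reach an element of $\condcharred(H\act Y)$ by composing epimorphisms after obtaining $G\twoheadrightarrow S/\frat(S)$, whereas the paper minimizes $|S/\frat(S)|$ among the witnessing $S$ and invokes Lemma~\ref{LemmaEpiImpliesCondition}.
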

\begin{proof}
    The first class is contained in the third as for all $G$ in the class set, there is a fixed-point free subgroup $S\le H$ and $N\normalsubgroup G$ such that $S/\frat(S)\iso G/N$ and we have
    \begin{align*}
        \condition (H\act Y )
        &
        \ecle \condition (S\act Y )
        \eceq \condition (S \act \prim(S) )
        \\&
        \eceq \condition (S/\frat(S) \act \prim(S/\frat(S)))
        \eceq \condition (G/N \act \prim(G/ N) )
        \\&
        \eceq \condition (G \act \prim(G/N) )
        \ecle \condition (G \act \prim(G)).
    \end{align*}

    The third class is contained in the second:
    If $G$ would be in the third, but not in the second class, then $\Pol_G(\prim(G))$ would have an element invariant under $H\act Y$ and thus by the third condition an element invariant under $G\act \prim(G)$. This contradicts Lemma~\ref{LemmaNoSelfLoop}.

    The second class is contained in the first: By Corollary~\ref{CorollaryConditionToPrimitive}, $H\act Y$ is elementary equivalent and thus equivalent to $S\act \prim(S)$ for all subgroups $S\le H$ that act fixed-point free. Thus, for each $G$ in the second class, there is such a subgroup $S$ such that
    $$
        \Pol_G(\prim(G)) \not\models S\act \prim(S) \eceq S/\frat(S)\act \prim(S/\frat(S))
    $$
    We may assume that $|S/\frat(S)|$ is minimal and get by Lemma~\ref{LemmaEpiImpliesCondition} that $S/\frat(S)\in \condcharred(H\act Y)$. 
    Similarly, we get by Lemma~\ref{LemmaPrimitiveClassificationToEpi}, that there is an epimorphism $G\to S/\frat(S)$. Thus, $\exists N\normalsubgroup G: G/N\in \condcharred(H\act Y)$.
\end{proof}

\begin{lemma} \label{LemmaCondcharredUnfold}
    Let $H\act Y$ be a group action. Then, $\condition(H\act Y)$ is elementary equivalent to $\{\condition(H'\act \prim(H')) \mid H'\in \condcharred(H\act Y)\}$.
\end{lemma}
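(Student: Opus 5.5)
We prove the two elementary implications separately, using Corollary~\ref{CorollaryConditionToPrimitive} as the bridge. That corollary gives
$$\condition(H\act Y)\eceq \{S\act \prim(S)\mid S\le H,\ S\act Y \text{ fixed-point free}\}.$$
Composition of elementary implications (the transitivity lemma for loop conditions) then reduces the claim to comparing this finite set of conditions with
$$\mathcal R=\{H'\act\prim(H')\mid H'\in\condcharred(H\act Y)\}.$$

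\emph{From the big set to $\mathcal R$.} Take $H'\in\condcharred(H\act Y)$. By definition $H'\iso S/\frat(S)$ for some fixed-point free subgroup $S\le H$. The condition $S\act\prim(S)$ belongs to the big set; we project to that factor of the external product, which is a subgroup action followed by a homomorphism. Lemma~\ref{LemmaFrattiniQuotient}, or directly the quotient along $S\to S/\frat(S)$, then gives
$$\condition(S\act\prim(S))\eceq\condition(H'\act\prim(H')).$$
This is also the special case of Lemma~\ref{LemmaEpiImpliesCondition} for an isomorphism. Doing this for every $H'$ and taking the product yields the implication to $\mathcal R$.

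\emph{From $\mathcal R$ to the big set.} Take any fixed-point free $S\le H$. It suffices to find some $H'\in\condcharred(H\act Y)$ and an epimorphism
$$S\twoheadrightarrow H'\iso H'/\frat(H').$$
Lemma~\ref{LemmaEpiImpliesCondition}, applied with $G=S$ and $G'=H'$, then gives $H'\act\prim(H')\ecle S\act\prim(S)$, and projecting from the product $\mathcal R$ onto the relevant factor completes the step.

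To find $H'$, consider the set of groups $S''/\frat(S'')$ with $S''\le H$ fixed-point free such that $S/\frat(S)$ surjects onto $S''/\frat(S'')$. This set is finite and nonempty, since it contains $S/\frat(S)$. Choose a member $S''/\frat(S'')$ of minimal order. If it admitted a non-isomorphic epimorphism onto some $S'/\frat(S')$ with $S'$ fixed-point free, composing epimorphisms would contradict minimality. Hence $S''/\frat(S'')\in\condcharred(H\act Y)$, and composing $S\to S/\frat(S)\to S''/\frat(S'')$ gives the required epimorphism.

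The main obstacle is bookkeeping rather than mathematics. First, the set-versus-single-condition convention (external products) must be reconciled with projections, each realized by a subgroup action and a homomorphism. Second, the degenerate cases need checking. If $H\act Y$ has a fixed point, then $\condcharred(H\act Y)$ is empty, and both sides are trivial conditions (the empty product is the trivial action on a point). If $Y=\emptyset$, the trivial subgroup is fixed-point free, so $\{1\}\in\condcharred$ and $\prim(\{1\})=\emptyset$, consistent with the conventions of the empty-action section.
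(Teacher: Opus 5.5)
Your proof is correct and takes essentially the paper's route. The paper's proof is only the one-line combination of Corollary~\ref{CorollaryConditionToPrimitive} with Theorem~\ref{TheoremFrattiniActionPoset}, whose relevant direction is Lemma~\ref{LemmaEpiImpliesCondition}; your minimal-order argument, showing that every fixed-point-free $S\le H$ surjects onto a member of $\condcharred(H\act Y)$, makes explicit the step the paper leaves implicit. (One small quibble: Lemma~\ref{LemmaFrattiniQuotient} concerns ppg-interconstructibility rather than elementary equivalence, but your direct quotient/subgroup-action argument and Lemma~\ref{LemmaEpiImpliesCondition} already cover that step.)
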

\begin{proof}
    This is a combination of Corollary~\ref{CorollaryConditionToPrimitive} and Theorem~\ref{TheoremFrattiniActionPoset}.
\end{proof}

\begin{theorem}[Classification of elementary implications.]
    \label{TheoremClassificationEI}
    Let $H_1\act Y_1$ and $H_2\act Y_2$ be group actions. The following are equivalent:
    \begin{enumerate}
        \item \label{NrTheoremClassificationEI1}
        $H_1\act Y_1\ecle H_2\act Y_2$.
        \item \label{NrTheoremClassificationEI3}
        Every group in $\condcharred(H_2\act Y_2)$ admits an epimorphism into a group inside $\condcharred(H_1\act Y_1)$.
        \item \label{NrTheoremClassificationEI2}
        For every group action $G\act X$ such that $\Pol_G(X)\models \condition(H_1\act Y_1)$, we also have $\Pol_G(X)\models \condition(H_2\act Y_2)$.
        \item \label{NrTheoremClassificationEI5}
        For all groups $S$ with trivial Frattini subgroup, we have
        $$
            \Pol_S(\prim(S)) \models H_1\act Y_1 \implies \Pol_S(\prim(S)) \models H_2\act Y_2.
        $$
        \item \label{NrTheoremClassificationEI6}
        For all groups $S\in \condcharred(H_2\act Y_2)$, we have 
        $$
            \Pol_S(\prim(S)) \not\models H_1\act Y_1.
        $$
        \item \label{NrTheoremClassificationEI4}
        For every subgroup $S_2\le H_2$, such that $S_2\act Y_2$ has no fixed-point, there exists $S_1\le H_1$ and a normal subgroup $N_2\normalsubgroup S_2$ such that $S_1\act Y_1$ has no fixed-point and $S_1/\frat(S_1) \iso S_2/N_2$.
    \end{enumerate}
\end{theorem}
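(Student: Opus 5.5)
I will prove the equivalences in a cycle, routing everything through the Frattini actions by means of Lemma~\ref{LemmaCondcharredUnfold}, Lemma~\ref{LemmaCondCharCharacterisation} and Theorem~\ref{TheoremFrattiniActionPoset}. The planned chain is \ref{NrTheoremClassificationEI1}$\Rightarrow$\ref{NrTheoremClassificationEI2}$\Rightarrow$\ref{NrTheoremClassificationEI5}$\Rightarrow$\ref{NrTheoremClassificationEI6}$\Rightarrow$\ref{NrTheoremClassificationEI3}$\Rightarrow$\ref{NrTheoremClassificationEI1}. Separately, \ref{NrTheoremClassificationEI3}$\Leftrightarrow$\ref{NrTheoremClassificationEI4} will be handled as a direct translation of the definition of $\condcharred$.

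\textbf{The first two steps.} The implication \ref{NrTheoremClassificationEI1}$\Rightarrow$\ref{NrTheoremClassificationEI2} is exactly Lemma~\ref{LemmaElImplImplyImply}. The implication \ref{NrTheoremClassificationEI2}$\Rightarrow$\ref{NrTheoremClassificationEI5} is specialisation to $G\act X=S\act\prim(S)$.

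\textbf{The step \ref{NrTheoremClassificationEI5}$\Rightarrow$\ref{NrTheoremClassificationEI6}.} Take $S\in\condcharred(H_2\act Y_2)$. Then $\frat(S)=\{1\}$. By Lemma~\ref{LemmaCondCharCharacterisation}, using the trivial quotient $N=\{1\}$, we get $\Pol_S(\prim(S))\not\models\condition(H_2\act Y_2)$. The contrapositive of \ref{NrTheoremClassificationEI5} then gives $\Pol_S(\prim(S))\not\models\condition(H_1\act Y_1)$.

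\textbf{The step \ref{NrTheoremClassificationEI6}$\Rightarrow$\ref{NrTheoremClassificationEI3}.} Take $S\in\condcharred(H_2\act Y_2)$. By \ref{NrTheoremClassificationEI6}, $S$ lies in the second class of Lemma~\ref{LemmaCondCharCharacterisation} for $H_1\act Y_1$. Hence it lies in the first class, so there is $N\normalsubgroup S$ with $S/N\in\condcharred(H_1\act Y_1)$. This is the required epimorphism.

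\textbf{The step \ref{NrTheoremClassificationEI3}$\Rightarrow$\ref{NrTheoremClassificationEI1}.} By Lemma~\ref{LemmaCondcharredUnfold}, $\condition(H_1\act Y_1)$ is elementary equivalent to the finite family $\{S'\act\prim(S')\mid S'\in\condcharred(H_1\act Y_1)\}$. Fix $S\in\condcharred(H_2\act Y_2)$, and choose by \ref{NrTheoremClassificationEI3} an epimorphism $S\to S'$ with $S'\in\condcharred(H_1\act Y_1)$. Since $\frat(S')$ is trivial, Theorem~\ref{TheoremFrattiniActionPoset} (implication \ref{NrFrattiniActionPoset1}$\Rightarrow$\ref{NrFrattiniActionPoset4}) gives $\condition(S'\act\prim(S'))\ecle\condition(S\act\prim(S))$.

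It remains to get from a single member of the family to the whole family. The external product of the family projects onto each factor: take the subgroup action through the inclusion of the factor $S'$, then a homomorphism onto the factor set $\prim(S')$, which is legitimate since every other factor has a point. So the family elementary implies each member. Composing, the family elementary implies every $S\act\prim(S)$ with $S\in\condcharred(H_2\act Y_2)$, and thus their external product as well, using the same product argument as in Lemma~\ref{LemmaPPConstructionsTransitive} together with the transitivity lemma for elementary implications. By Lemma~\ref{LemmaCondcharredUnfold} again, this product is equivalent to $\condition(H_2\act Y_2)$, and transitivity finishes the step.

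\textbf{The equivalence \ref{NrTheoremClassificationEI3}$\Leftrightarrow$\ref{NrTheoremClassificationEI4}.} For \ref{NrTheoremClassificationEI4}$\Rightarrow$\ref{NrTheoremClassificationEI3}, write an element of $\condcharred(H_2\act Y_2)$ as $S_2/\frat(S_2)$. Apply \ref{NrTheoremClassificationEI4} to $S_2$; note that $N_2\supseteq\frat(S_2)$ is automatic, because $S_2/N_2\iso S_1/\frat(S_1)$ has trivial Frattini subgroup. This gives an epimorphism $S_2/\frat(S_2)\to S_1/\frat(S_1)$. Now descend to a minimal element: replace $S_1$ by a fixed-point-free $S_1'\le H_1$ admitting a proper epimorphism from $S_1/\frat(S_1)$ for as long as possible; finiteness guarantees this terminates. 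The end result lies in $\condcharred(H_1\act Y_1)$, and composing epimorphisms gives \ref{NrTheoremClassificationEI3}.

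For \ref{NrTheoremClassificationEI3}$\Rightarrow$\ref{NrTheoremClassificationEI4}, take an arbitrary fixed-point-free $S_2\le H_2$. By the same descent, $S_2/\frat(S_2)$ maps onto some element of $\condcharred(H_2\act Y_2)$. By \ref{NrTheoremClassificationEI3}, that element maps onto some $S_1/\frat(S_1)$ with $S_1\le H_1$ fixed-point free. Taking $N_2$ to be the kernel of the composite $S_2\to S_1/\frat(S_1)$ gives \ref{NrTheoremClassificationEI4}.

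\textbf{Expected obstacle.} The main care point is this descent argument. One must check that ``admits a proper epimorphism onto'' is well-founded, which holds by strictly decreasing order, and that the final group genuinely satisfies condition 2 of the definition of $\condcharred$. A secondary point is the bookkeeping for the empty action and for actions with fixed points. If $H_2\act Y_2$ has a fixed point, then $\condcharred(H_2\act Y_2)=\emptyset$ and all statements hold trivially; these cases should be dispatched separately at the start.
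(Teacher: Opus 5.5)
Your proof is correct. It shares the opening chain \ref{NrTheoremClassificationEI1}$\Rightarrow$\ref{NrTheoremClassificationEI2}$\Rightarrow$\ref{NrTheoremClassificationEI5}$\Rightarrow$\ref{NrTheoremClassificationEI6}$\Rightarrow$\ref{NrTheoremClassificationEI3} with the paper, via Lemma~\ref{LemmaElImplImplyImply} and Lemma~\ref{LemmaCondCharCharacterisation}, but it closes the cycle differently.

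\begin{itemize}
\item \textbf{The paper's route.} It goes \ref{NrTheoremClassificationEI3}$\Rightarrow$\ref{NrTheoremClassificationEI4}$\Rightarrow$\ref{NrTheoremClassificationEI1}. The last step applies Corollary~\ref{CorollaryConditionToPrimitive} to both sides, so it works with the \emph{unreduced} families $\{S\act\prim(S)\}$ over all fixed-point-free subgroups. It then transfers one family to the other with Lemma~\ref{LemmaEpiImpliesCondition}, so no minimality argument is needed at that point.

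\item \textbf{Your route.} You close at \ref{NrTheoremClassificationEI3}$\Rightarrow$\ref{NrTheoremClassificationEI1} directly, using the reduced families from Lemma~\ref{LemmaCondcharredUnfold} together with Theorem~\ref{TheoremFrattiniActionPoset}. You treat \ref{NrTheoremClassificationEI3}$\Leftrightarrow$\ref{NrTheoremClassificationEI4} as a purely group-theoretic statement, proved by descent to $\condcharred$ plus the observation $\frat(S_2)\le N_2$.

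\item \textbf{What your route gains.} When the paper proves \ref{NrTheoremClassificationEI3}$\Rightarrow$\ref{NrTheoremClassificationEI4}, it applies Lemma~\ref{LemmaCondCharCharacterisation} to an arbitrary fixed-point-free $S_2$. It leaves implicit that this first requires $\condition(H_2\act Y_2)\ecle\condition(S_2\act\prim(S_2))$. Your descent makes that step, and its converse, explicit and independent of the implication machinery.

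\item \textbf{What it costs.} Your closing step rests on Lemma~\ref{LemmaCondcharredUnfold}, which internally uses the same descent and Corollary~\ref{CorollaryConditionToPrimitive}. So your route is not shorter overall.
\end{itemize}

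Two small remarks. First, projecting the product onto one factor is an equivariant map whether or not the other factors are nonempty, so that justification is unnecessary. It is true anyway, since $\{1\}\in\condcharred$ forces $\{1\}$ to be the only member. Second, the paper's transitivity lemma for elementary implications already accepts a family in the middle. You can apply it directly to $\{S\act\prim(S)\mid S\in\condcharred(H_2\act Y_2)\}$ instead of rebuilding the product argument.
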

\begin{proof}
    \ref{NrTheoremClassificationEI1} implies \ref{NrTheoremClassificationEI2} by Lemma~\ref{LemmaElImplImplyImply}.

    \ref{NrTheoremClassificationEI2} implies \ref{NrTheoremClassificationEI5} is obvious.

    \ref{NrTheoremClassificationEI5} implies \ref{NrTheoremClassificationEI6} as all groups in $\condcharred(H_2\act Y_2)$ have a trivial Frattini subgroup.

    \ref{NrTheoremClassificationEI6} is equivalent to  \ref{NrTheoremClassificationEI3} by 
    Lemma~\ref{LemmaCondCharCharacterisation}. We get that $\Pol_S(\prim(S)) \not\models H_1\act Y_1$, if and only if $S$ has am epimorphism into a group in $\condcharred(H_1\act X_1)$. 

    \ref{NrTheoremClassificationEI3} implies \ref{NrTheoremClassificationEI4}: Every such subgroup $S_2$ has an epimorphism into $\condcharred(H_2\act X_2)$ by Lemma~\ref{LemmaCondCharCharacterisation}. By assumption, it moreover has an epimorphism into a group in $\condcharred(H_1\act X_1)$ which we can write as $S_1/\frat(S_1)$ for $S_1\act Y_1$ without fixed point. So there is $N_2\normalsubgroup S_2$ such that $S_2/N_2\iso S_1/\frat(S_1)$.

    \ref{NrTheoremClassificationEI4} implies \ref{NrTheoremClassificationEI1} as
    \begin{align*}
        H_1\act Y_1
        &\ecle \{ S \act \prim(S) \mid S\le H_1, S\act Y_1 \text{ fixed point free}\}
        \\&\ecle \{ S \act \prim(S) \mid S\le H_2, S\act Y_2 \text{ fixed point free}\}
        \\&\ecle H_2\act Y_2
    \end{align*}
    by Corollary~\ref{CorollaryConditionToPrimitive} and Lemma~\ref{LemmaEpiImpliesCondition} and again Corollary~\ref{CorollaryConditionToPrimitive}.
\end{proof}

\begin{theorem}[Classification of loop conditions up to elementary equivalence.]
    The existence of an elementary implication in both direction defines an equivalence relation on loop conditions.
    Two gloop conditions $\condition(G_1\act X_1)$ and $\condition(G_2\act X_2)$ are elementary equivalent, if $\condcharred(G_2\act X_2) = \condcharred(G_1\act X_1)$.

    A representation system of these equivalence classes is given by loop conditions of the form $\condcharred(\prod_{G'\in M} G' \act \prod_{G'\in M}\prim(G'))$ where $M$ is a finite set of finite groups with trivial Frattini subgroup such that no group in $M$ has an epimorphism into another group in $M$. 
    
    The representative of the equivalence class of $G\act X$ is given by the external direct product $\prod_{G'\in \condcharred(G\act X)} G' \act \prod_{G'\in \condcharred(G\act X)}\prim(G'))$.

    Moreover, $\condition(G\act X)$ is elementary equivalent to the set 
    $$\{\condition(H\act \prim(H))\mid H \in \condcharred(G\act X)\} .$$
\end{theorem}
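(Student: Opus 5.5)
The plan mirrors the proof of the ppg-interconstructability classification, with Theorem~\ref{TheoremClassificationEI} playing the role of Theorem~\ref{TheoremMainPPConstruction}.

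\emph{Equivalence relation.} Elementary equivalence is reflexive, since the trivial first star power, the identity subgroup and the identity homomorphism give $(G\act X)\ecle(G\act X)$. It is symmetric by definition. It is transitive by the composition lemma for elementary implications, applied with a singleton index set.

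\emph{Criterion.} By Theorem~\ref{TheoremClassificationEI}, implication~\ref{NrTheoremClassificationEI1} $\Leftrightarrow$~\ref{NrTheoremClassificationEI3}, we have $G_1\act X_1\ecle G_2\act X_2$ if and only if every group in $\condcharred(G_2\act X_2)$ maps epimorphically onto a group in $\condcharred(G_1\act X_1)$. If the two reduced characteristics are equal, the identity maps witness both directions, so the conditions are elementary equivalent. For the converse, assume epimorphisms exist in both directions. Take $A\in\condcharred(G_2\act X_2)$. There are epimorphisms $A\twoheadrightarrow B$ with $B\in\condcharred(G_1\act X_1)$ and $B\twoheadrightarrow A'$ with $A'\in\condcharred(G_2\act X_2)$. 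The composite $A\twoheadrightarrow A'$ must be an isomorphism: otherwise $A$ would map onto a different element of the reduced set, contradicting the second defining clause of $\condcharred$. Hence $|A|\le|B|\le|A'|=|A|$, so $A\iso B$. By symmetry the two classes coincide up to isomorphism. This antichain argument is the key step, and essentially the only non-formal point.

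\emph{Unfolding and representatives.} The final claim, that $\condition(G\act X)$ is elementary equivalent to $\{\condition(H\act\prim(H))\mid H\in\condcharred(G\act X)\}$, is exactly Lemma~\ref{LemmaCondcharredUnfold}. By definition, a finite set of conditions is equivalent to its external product action. So $G\act X$ is elementary equivalent to $P_M=\prod_{G'\in M}G'\act\prod_{G'\in M}\prim(G')$ with $M=\condcharred(G\act X)$, which is the stated representative.

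It remains to show that each such $M$ occurs, and that distinct $M$ give inequivalent conditions. Let $M$ be a finite antichain under epimorphisms of groups with trivial Frattini subgroup. I would compute $\condcharred(P_M)=M$ using Lemma~\ref{LemmaCondCharCharacterisation}, whose class~2 is
\[
\{S\mid \Pol_S(\prim(S))\not\models\condition(P_M)\}.
\]
By the unfolding above and the definition of implication for sets, this class equals the union over $G'\in M$ of the classes $\{S\mid\Pol_S(\prim(S))\not\models\condition(G'\act\prim(G'))\}$. By Theorem~\ref{TheoremFrattiniActionPoset}, each of these is the class of groups admitting an epimorphism onto $G'/\frat(G')=G'$. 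So class~2 is exactly the class of groups mapping onto some member of $M$.

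Lemma~\ref{LemmaCondCharCharacterisation} says class~1 equals this same class, so $M$ and $\condcharred(P_M)$ generate the same upward-closed class under epimorphisms. The minimal elements of that class, that is, those admitting no non-isomorphic epimorphism onto another member, are precisely the antichain. This gives $\condcharred(P_M)=M$. I expect this minimal-element identification to need the same size argument as in the criterion step. Distinct antichains therefore give distinct reduced characteristics, and hence inequivalent conditions by the criterion.
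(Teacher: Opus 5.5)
Your proposal is correct and follows the argument the paper intends: the paper states this theorem without a separate proof and relies on Theorem~\ref{TheoremClassificationEI} and Lemma~\ref{LemmaCondcharredUnfold}, just as in the ppg-interconstructability classification, and your antichain argument for the converse of the criterion and your computation of $\condcharred(P_M)=M$ supply the details the paper leaves implicit. The only slip is cosmetic: epimorphisms cannot increase order, so the chain in the criterion step should read $|A|\ge|B|\ge|A'|=|A|$, which still gives $A\iso B$.
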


\subsubsection{Combined}
We also have now some easy to check equivalent conditions for $\Pol_G(X)\models \condition(H\act Y)$.

\begin{theorem}[Classification of modeling a loop condition.] \label{TheoremModelsCharakterisierung}
    Let $G\act X$ and $H\act Y$ be group actions. The following are equivalent:
    \begin{enumerate}
        \item \label{NrTheoremModelsCharakterisierung1}
        $\Pol_G(X)\not \models \condition( H\act Y)$.
        \item \label{NrTheoremModelsCharakterisierung4}
       	$\condcharred(H\act Y)$ and $\ppchar(G\act X)$ are not disjoint.
        \item \label{NrTheoremModelsCharakterisierung2a}
        $G\act X$ ppg-constructs $H'\act \prim(H')$ for some group $H'\in \condcharred(H\act Y)$.
        \item \label{NrTheoremModelsCharakterisierung2b}
        The condition $\condition (H\act Y)$ elementary implies $\condition (G'\act \prim(G'))$ for some $G'\in \ppchar(G\act X)$.
        \item \label{NrTheoremModelsCharakterisierung5}
        There are subgroups $G''\normalsubgroup G' \le G$ and $H''\normalsubgroup H' \le H$ such that $G'\act X$ is minimal fixed-point free and $H'\act Y$ is fixed-point free and $G'/G'' \iso H'/H''$.
    \end{enumerate}
\end{theorem}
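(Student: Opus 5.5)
The plan is to route everything through the reduction of $G\act X$ to the product of Frattini actions indexed by $\ppchar(G\act X)$, and of $\condition(H\act Y)$ to the set of Frattini conditions indexed by $\condcharred(H\act Y)$. The pivot is \ref{NrTheoremModelsCharakterisierung4}. I would prove \ref{NrTheoremModelsCharakterisierung1}$\Leftrightarrow$\ref{NrTheoremModelsCharakterisierung4}, then \ref{NrTheoremModelsCharakterisierung4}$\Leftrightarrow$\ref{NrTheoremModelsCharakterisierung2a}, then \ref{NrTheoremModelsCharakterisierung4}$\Leftrightarrow$\ref{NrTheoremModelsCharakterisierung2b}, and finally \ref{NrTheoremModelsCharakterisierung4}$\Leftrightarrow$\ref{NrTheoremModelsCharakterisierung5}.

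For \ref{NrTheoremModelsCharakterisierung1}$\Leftrightarrow$\ref{NrTheoremModelsCharakterisierung4}, first use Lemma~\ref{LemmaCondcharredUnfold}: $\condition(H\act Y)$ is elementary equivalent to $\{\condition(H'\act\prim(H'))\mid H'\in\condcharred(H\act Y)\}$. By Lemma~\ref{LemmaElImplImplyImply} applied in both directions, $\Pol_G(X)\models\condition(H\act Y)$ holds iff $\Pol_G(X)$ satisfies every $\condition(H'\act\prim(H'))$ with $H'\in\condcharred(H\act Y)$. There is a small subtlety here: a set of conditions means the product condition, and satisfying the product is equivalent to satisfying each factor. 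From a fixed point for the product one gets fixed points of the factors via restriction or minors, and conversely one composes the factor witnesses. I would remark on this rather than prove it in detail.

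Each $H'\in\condcharred(H\act Y)$ has trivial Frattini subgroup. So by Lemma~\ref{LemmaPPCharCharacterisation} (third description), $\Pol_G(X)\not\models\condition(H'\act\prim(H'))$ iff $H'\in\ppchar(G\act X)$. Combining the two equivalences gives \ref{NrTheoremModelsCharakterisierung1}$\Leftrightarrow$\ref{NrTheoremModelsCharakterisierung4}. The same lemma, now in its second description, yields \ref{NrTheoremModelsCharakterisierung4}$\Leftrightarrow$\ref{NrTheoremModelsCharakterisierung2a} directly.

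For \ref{NrTheoremModelsCharakterisierung2b}, use Lemma~\ref{LemmaCondCharCharacterisation}. It says $\condition(H\act Y)\ecle\condition(G'\act\prim(G'))$ iff $G'$ has a quotient lying in $\condcharred(H\act Y)$. If $G'\in\ppchar(G\act X)$, that quotient again lies in $\ppchar(G\act X)$, since $\ppchar$ is closed under epimorphisms (Remark after its definition). This gives \ref{NrTheoremModelsCharakterisierung2b}$\Rightarrow$\ref{NrTheoremModelsCharakterisierung4}. Conversely, a common element $G'$ of the two sets is its own quotient, which gives \ref{NrTheoremModelsCharakterisierung4}$\Rightarrow$\ref{NrTheoremModelsCharakterisierung2b}.

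For \ref{NrTheoremModelsCharakterisierung5}, I expect the main bookkeeping obstacle: the Frattini-triviality of $\ppchar$ and the minimality built into $\condcharred$ must be matched.

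\ref{NrTheoremModelsCharakterisierung4}$\Rightarrow$\ref{NrTheoremModelsCharakterisierung5}. Take a common element $K$. By the definition of $\ppchar$, $K\iso G'/G''$ with $G'\act X$ minimal fixed-point free. By the definition of $\condcharred$, $K\iso H'/\frat(H')$ with $H'\act Y$ fixed-point free. Setting $H''=\frat(H')$ gives \ref{NrTheoremModelsCharakterisierung5}.

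\ref{NrTheoremModelsCharakterisierung5}$\Rightarrow$\ref{NrTheoremModelsCharakterisierung1}. Put $K=G'/G''\iso H'/H''$. Then $K/\frat(K)$ is a quotient of $G'$ with trivial Frattini subgroup, so it lies in $\ppchar(G\act X)$, and by Lemma~\ref{LemmaPPCharCharacterisation} we get $\Pol_G(X)\not\models\condition(K/\frat(K)\act\prim(K/\frat(K)))$. Next observe that $H'\twoheadrightarrow K\twoheadrightarrow K/\frat(K)$ factors through $H'/\frat(H')$. This holds because the image of $\frat(H')$ under an epimorphism lies in the Frattini subgroup of the image, as maximal subgroups pull back to maximal subgroups. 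Hence Lemma~\ref{LemmaEpiImpliesCondition}, together with Lemma~\ref{LemmaFrattiniQuotient} and $\condition(H\act Y)\ecle\condition(H'\act Y)\eceq\condition(H'\act\prim(H'))$ from Corollary~\ref{CorollaryConditionToPrimitive}, gives $\condition(H\act Y)\ecle\condition(K/\frat(K)\act\prim(K/\frat(K)))$. By Lemma~\ref{LemmaElImplImplyImply}, if $\Pol_G(X)\models\condition(H\act Y)$ then $\Pol_G(X)$ would satisfy this last condition, a contradiction. So \ref{NrTheoremModelsCharakterisierung1} holds, which closes the cycle.
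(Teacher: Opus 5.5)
Your arguments for (1)$\Leftrightarrow$(2), (2)$\Leftrightarrow$(3), (2)$\Leftrightarrow$(4) and (2)$\Rightarrow$(5) are sound and follow essentially the paper's route. The step that closes the cycle, (5)$\Rightarrow$(1), fails.

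With $L=K/\frat(K)$ you have an epimorphism $H'/\frat(H')\twoheadrightarrow L$, and you invoke Lemma~\ref{LemmaEpiImpliesCondition} to get $\condition(H'\act\prim(H'))\ecle\condition(L\act\prim(L))$. That lemma goes the other way. An epimorphism $H'\twoheadrightarrow L$ gives $\condition(L\act\prim(L))\ecle\condition(H'\act\prim(H'))$, so the Frattini condition of a quotient is the \emph{stronger} one. By Theorem~\ref{TheoremFrattiniActionPoset}, the implication you need holds only if $L$ maps onto $H'/\frat(H')$. That means $H'/\frat(H')\twoheadrightarrow L$ must be an isomorphism, and nothing in (5) forces this.

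In fact (5) as stated does not imply (1) at all. With $G''=G'$ and $H''=H'$ it holds for any two fixed-point free actions, e.g.\ $\IZ/2\IZ\act\IZ/2\IZ$ and $\IZ/3\IZ\act\IZ/3\IZ$. But majority is a self-dual cyclic ternary polymorphism, so $\Pol_{\IZ/2\IZ}(\IZ/2\IZ)\models\condition(\IZ/3\IZ\act\IZ/3\IZ)$. Nontrivial quotients do not save it. Take $\IZ/2\IZ\act\IZ/2\IZ$ versus $\IZ/6\IZ\act\prim(\IZ/6\IZ)$ with $G''=\{1\}$ and $H''=\IZ/3\IZ$; this satisfies (5). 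Yet $t\mapsto\mathrm{maj}(t(q_1),t(q_2),t(q_3))$, where $q_1,q_2,q_3$ is the $3$-element orbit, witnesses the condition. The paper's one-line justification of (5)$\Rightarrow$(1) (``this is elementary implied by $\condition(H\act Y)$'') makes the same unjustified claim. So the defect lies in the formulation of (5), not only in your argument.

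The repair is to require $H''\le\frat(H')$. Your own (2)$\Rightarrow$(5) produces $H''=\frat(H')$, so the equivalence survives. Then $H'/\frat(H')$ is a quotient of $K\iso G'/G''$ with trivial Frattini subgroup, hence lies in $\ppchar(G\act X)$. Moreover $\condition(H\act Y)\ecle\condition(H'\act Y)\ecle\condition(H'\act\prim(H'))\ecle\condition(H'/\frat(H')\act\prim(H'/\frat(H')))$, using the subgroup action, the $H'$-set homomorphism $Y\to\prim(H')$, and the quotient action. So Lemma~\ref{LemmaPPCharCharacterisation} and Lemma~\ref{LemmaElImplImplyImply} give (1), with no direction issue.

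Two harmless slips elsewhere:
\begin{itemize}
\item $\condition(H'\act Y)\eceq\condition(H'\act\prim(H'))$ holds only when $H'\act Y$ is minimal fixed-point free. You only use $\ecle$, which always holds.
\item An epimorphic image of a member of $\ppchar(G\act X)$ lies in it only when the image has trivial Frattini subgroup. Quotients of Frattini-free groups need not be Frattini-free, e.g.\ $F\twoheadrightarrow\IZ/4\IZ$. In your (4)$\Rightarrow$(2) the image lies in $\condcharred(H\act Y)$, so this suffices.
\end{itemize}
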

\begin{proof}
    \ref{NrTheoremModelsCharakterisierung1} implies \ref{NrTheoremModelsCharakterisierung2a}: $\condition(H\act Y)$ is elementary equivalent to $\{ \condition(H'\act \prim(H')) \mid 
    H'\in \condcharred(H\act Y)$ by Lemma \ref{LemmaCondcharredUnfold}. By Theorem~\ref{TheoremConstructingGroupActionA}, applied to $G'\act X'=H'\act \prim(H')$, this is equivalent to $G\act X$ ppg-constructing $H'\act \prim(H')$ for some group $H'\in \condcharred(H\act Y)$.

    \ref{NrTheoremModelsCharakterisierung2a} is equivalent to \ref{NrTheoremModelsCharakterisierung4} by Lemma~\ref{LemmaPPCharCharacterisation}.

    \ref{NrTheoremModelsCharakterisierung4} implies \ref{NrTheoremModelsCharakterisierung2b} by Lemma~\ref{LemmaCondcharredUnfold}.
    
    \ref{NrTheoremModelsCharakterisierung2b} implies \ref{NrTheoremModelsCharakterisierung4}: By Lemma~\ref{LemmaCondCharCharacterisation}, there is an element in $\ppchar(G\act X)$ that is a quotient of an element in $\condcharred(H\act Y)$. As $\condcharred(H\act Y)$ is closed under quotients by definition, it is also inside this set.

    \ref{NrTheoremModelsCharakterisierung4} implies \ref{NrTheoremModelsCharakterisierung5} as the common element of $\condcharred(H\act Y)$ and $\ppchar(G\act X)$ is a subquotient of both $G$ and $H$ satisfying the conditions.

    \ref{NrTheoremModelsCharakterisierung5} implies \ref{NrTheoremModelsCharakterisierung1} as $\Pol_G(X)\not \models \condition( G'/G'')$ and this is elementary implied by $\condition( H\act Y)$.
\end{proof}

\section{Group Actions and Loop Conditions in the context of Universal algebra}
\label{SectionApplyUniversalAlgebra}
We have considered in Section~\ref{SectionClassification} ppg-constructions and elementary implications in the context of group actions and loop conditions. We now translate these constructions into the general setting of universal algebra as introduced in Section~\ref{SectionPreliminariesUniversalAlgebra} 
consisting of first order structures and minor conditions.

\subsection{ppg-constructions}
We have seen that ppg-constructions can be expressed in terms of loop conditions. We show that this notion is equivalent to pp-constructions and also agrees the description for minor conditions.

\begin{theorem} \label{TheoremPPConstructionGroups}
    Let $G_1\act X_1$ and $G_2\act X_2$ be two (finite) group actions. Then, the following are equivalent:
    \begin{enumerate}
        \item \label{NrppConstruct1}
        The group action $G_1\act X_1$ ppg-constructs the group action $G_2\act X_2$.
        \item \label{NrppConstruct2}
        The structure $\structure(G_1\act X_1)$ pp-constructs the structure $\structure(G_2\act X_2)$.
        \item \label{NrppConstruct3}
        The structure $\structure(G_2\act X_2)$ satisfies all minor conditions of $\structure(G_1\act X_1)$.
        \item \label{NrppConstruct4}
        The structure $\structure(G_2\act X_2)$ satisfies all loop conditions of the form $H\act \prim(H)$ of $\structure(G_1\act X_1)$.
        \item \label{NrppConstruct5}
        For all groups $H$ we have that $\Pol_{G_1}(X_1)\models \condition(H\act \prim(H))$ implies $\Pol_{G_2}(X_2)\models \condition(H\act \prim(H))$.
        \item \label{NrppConstruct6}
        $\ppchar(G_2 \act X_2) \subseteq \ppchar(G_1\act X_1)$
    \end{enumerate}
\end{theorem}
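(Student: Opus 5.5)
Most of the equivalences are already in Theorem~\ref{TheoremMainPPConstruction}, so the plan is to bridge the purely group-theoretic items there to the universal-algebraic items via Theorem~\ref{TheoremReflections}. The one real task is a dictionary between the two languages. First I verify, using Example~\ref{ExampleGSet}, that $\Pol(\structure(G\act X))$ restricted to arity $Y$ is exactly $\Pol^Y_G(X)=\Hom_G(X^Y,X)$, including $Y=\emptyset$. Here a nullary polymorphism is a fixed point, matching our convention for $\condition(\{1\}\act\emptyset)$. Second, I check that the universal-algebraic minor condition $\condition(H\act Y)$ of the Example in Section~\ref{SectionPreliminariesUniversalAlgebra} is satisfied by $\Pol(\structure(G\act X))$ if and only if $\Pol_G(X)\models\condition(H\act Y)$ in the sense of Definition~\ref{DefinitionMinorForGroups}. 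Both assert that some $f\in\Hom_G(X^Y,X)$ satisfies $\minoraction{h}{f}=f$ for all $h\in H$. The only thing to watch is the left versus right convention $t\mapsto t_h$, which is harmless since $h$ ranges over a group. With this dictionary, item \ref{NrppConstruct4} and item \ref{NrppConstruct5} say the same thing.

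The chain I would then prove is as follows.

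\ref{NrppConstruct2}$\Leftrightarrow$\ref{NrppConstruct3}: this is Theorem~\ref{TheoremReflections}. The theorem is stated for finite relational structures; I have to make sure it still holds with empty structures and nullary polymorphisms allowed. I would argue this by a short case check on $\structure(\{1\}\act\emptyset)$, using the discussion in Section~\ref{SectionEmptyAction}, or else accept it as part of the cited framework with nullary arities (\cite{Nullary}).

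\ref{NrppConstruct3}$\Rightarrow$\ref{NrppConstruct4}: trivial, since loop conditions are special minor conditions.

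\ref{NrppConstruct4}$\Leftrightarrow$\ref{NrppConstruct5}: by the dictionary.

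\ref{NrppConstruct5}$\Rightarrow$\ref{NrppConstruct6}: by Lemma~\ref{LemmaPPCharCharacterisation}, third description. Any $H$ in $\ppchar(G_2\act X_2)$ has trivial Frattini subgroup and $\Pol_{G_2}(X_2)\not\models\condition(H\act\prim(H))$. By \ref{NrppConstruct5} in contrapositive form, also $\Pol_{G_1}(X_1)\not\models\condition(H\act\prim(H))$, so $H\in\ppchar(G_1\act X_1)$.

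\ref{NrppConstruct6}$\Leftrightarrow$\ref{NrppConstruct1}: this is Theorem~\ref{TheoremMainPPConstruction}.

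\ref{NrppConstruct1}$\Rightarrow$\ref{NrppConstruct2}, which closes the cycle, is the main obstacle. Here I must show that each of the four ppg-steps is a pp-construction, and then use transitivity of pp-constructions.

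\emph{Homomorphic equivalence.} This is built into the definition of pp-construction.

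\emph{Subgroup action along $f\colon H\to G$.} Each relation $v_h$ is interpreted as $v_{f(h)}$, which is immediate.

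\emph{Quotient group sub-action.} The fixed points of $N$ are pp-definable by $\bigwedge_{n\in N}v_n(x,x)$. Each relation $v_{gN}$ is given by $v_g$ on that set.

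\emph{Full power action $\Sym(Y)\ltimes G^Y\act X^Y$.} Take the $|Y|$-th power of the domain. The graph of $(h,f)$ acting on $X^Y$ is the formula
$$\bigwedge_{y\in Y} v_{f(y)}\bigl(x_{h^{-1}(y)},x'_y\bigr),$$
up to the exact index convention, and this is pp.

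Should the reflection theorem's hypotheses cause trouble for empty sets, I would fall back on proving \ref{NrppConstruct3}$\Rightarrow$\ref{NrppConstruct4}$\Rightarrow$\ref{NrppConstruct1} and \ref{NrppConstruct1}$\Rightarrow$\ref{NrppConstruct2}$\Rightarrow$\ref{NrppConstruct3} directly. The last of these uses only the easy direction, that pp-constructions preserve minor conditions.
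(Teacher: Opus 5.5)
Your proposal is correct and follows the same cycle as the paper. That cycle is $1\Rightarrow2$ (every ppg-step is a pp-construction), $2\Rightarrow3$ by Theorem~\ref{TheoremReflections}, $3\Rightarrow4$ trivially, $4\Leftrightarrow5$ via $\Pol(\structure(G\act X))=\Pol_G(X)$, and $5\Rightarrow6\Rightarrow1$ through Lemma~\ref{LemmaPPCharCharacterisation} and Theorem~\ref{TheoremMainPPConstruction}. The only difference is that you write out the pp-formulas for each ppg-step and check the empty and nullary cases, which the paper dismisses as ``by definition''; this adds detail but is not a different route.
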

\begin{proof}
    \ref{NrppConstruct1} implies \ref{NrppConstruct2} as ppg-constructions are a special kind of pp-constructions by definition.

    \ref{NrppConstruct2} implies \ref{NrppConstruct3} by Theorem~\ref{TheoremReflections}. 

    \ref{NrppConstruct3} implies %\ref{NrppConstruct4} and \ref{NrppConstruct4} implies 
    \ref{NrppConstruct4} is obvious.

    \ref{NrppConstruct4} is equivalent to \ref{NrppConstruct5} as $\Pol(\structure(G\act X))=\Pol_G(X)$.

    \ref{NrppConstruct5} implies \ref{NrppConstruct6} and \ref{NrppConstruct6} implies \ref{NrppConstruct1} by Theorem~\ref{TheoremMainPPConstruction}. 
\end{proof}

\subsection{loop conditions}
We have seen in Section~\ref{SectionClassification} that elementary implications of loop conditions are equivalent to implications as condition on finite groups. Here, we show that it is also equivalent to being an implication as condition on general first order structures.

\begin{theorem} \label{TheoremElementImplication}
    Let $H_1\act Y_1$ and $H_2\act Y_2$ be two (finite) group actions. Then, the following are equivalent:
    \begin{enumerate}
        \item \label{NrElementImplication1}
        The condition $\condition(H_1\act Y_1)$ elementary implies the condition $\condition(H_2\act Y_2)$.
        \item \label{NrElementImplication2}
        The condition $\condition(H_1\act Y_1)$ implies the condition $\condition(H_2\act Y_2)$ for all clones.
        \item \label{NrElementImplication3}
        The condition $\condition(H_1\act Y_1)$ implies the condition $\condition(H_2\act Y_2)$ for the polymorphism minion of all (possibly infinite) structures.
        \item \label{NrElementImplication4}
        The condition $\condition(H_1\act Y_1)$ implies the condition $\condition(H_2\act Y_2)$ for the polymorphism minion of all finite structures.
        \item \label{NrElementImplication5}
        The condition $\condition(H_1\act Y_1)$ implies the condition $\condition(H_2\act Y_2)$ for the polymorphism minion of $\structure(G\act X)$ where $G\act X$ is a finite permutation group.
        \item \label{NrElementImplication6}
        The condition $\condition(H_1\act Y_1)$ implies the condition $\condition(H_2\act Y_2)$ for the polymorphism minion of $\structure(G\act \prim(G))$ where $G$ is a finite group with trivial Frattini subgroup.
        \item \label{NrElementImplication7}
        Every group in $\condchar(H_2\act Y_2)$ admits an epimorphism into a group inside $\condchar(H_1\act Y_1)$.
    \end{enumerate}
\end{theorem}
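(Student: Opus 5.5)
We prove the equivalences as a cycle. The chain \ref{NrElementImplication2} $\Rightarrow$ \ref{NrElementImplication3} $\Rightarrow$ \ref{NrElementImplication4} $\Rightarrow$ \ref{NrElementImplication5} $\Rightarrow$ \ref{NrElementImplication6} consists of pure specialisations, plus one reading convention. Any implication of minor conditions valid for all clones is in particular valid for polymorphism clones of arbitrary structures, hence of finite ones. By Example~\ref{ExampleGSet} the structures $\structure(G\act X)$ are finite, with $\Pol(\structure(G\act X))=\Pol_G(X)$, and $G\act \prim(G)$ is a special case of these. We must also check once that $\condition(H\act Y)$ in the universal algebraic sense (an $f$ with $\minoraction{h}{f}=f$ for all $h\in H$) coincides on $\Pol(\structure(G\act X))$ with the fixed-point formulation of Definition~\ref{DefinitionMinorForGroups}. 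This is immediate from the definitions, including the nullary case $Y=\emptyset$ handled in the section on the empty action.

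Next, \ref{NrElementImplication6} $\Rightarrow$ \ref{NrElementImplication1} is exactly Theorem~\ref{TheoremClassificationEI}, implication \ref{NrTheoremClassificationEI5} $\Rightarrow$ \ref{NrTheoremClassificationEI1}, after translating $\Pol(\structure(S\act\prim(S)))$ into $\Pol_S(\prim(S))$. For the equivalence with \ref{NrElementImplication7}, we use Lemma~\ref{LemmaCondCharCharacterisation}. We read $\condchar(H\act Y)$ as the class of groups with trivial Frattini subgroup that admit an epimorphism onto a member of $\condcharred(H\act Y)$. Then \ref{NrElementImplication7} is equivalent to condition \ref{NrTheoremClassificationEI3} of Theorem~\ref{TheoremClassificationEI}. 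Indeed, members of $\condcharred(H_2\act Y_2)$ lie in $\condchar(H_2\act Y_2)$. Conversely, epimorphisms compose, so if every group in $\condcharred(H_2\act Y_2)$ maps onto $\condchar(H_1\act Y_1)$, hence onto $\condcharred(H_1\act Y_1)$, then so does every group in $\condchar(H_2\act Y_2)$.

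The real content is \ref{NrElementImplication1} $\Rightarrow$ \ref{NrElementImplication2}. Lemma~\ref{LemmaElImplImplyImply} only proves this for $\Pol_G(X)$, so we redo its argument for an arbitrary clone $\clone$. We check that each of the four steps of an elementary implication transports a witness.
\begin{itemize}
\item Star power: given $f\in\clone_Y$ with $\minoraction{h}{f}=f$ for all $h\in H$, the iterated composition $f\circ(f,\dots,f)\circ\cdots$ is an element of $\clone_{Y^n}$. This uses the clone (operad) structure, which is why the statement is about clones rather than minions. The element is invariant under $H\ltimes H^Y\ltimes\cdots$, because an element of $H^{Y^{k}}$ permutes arguments inside the $k$-th layer of copies of $f$, and each copy is invariant.
\item Subgroup: restricting to a subgroup $H_2$ is trivial.
\item $H_2$-set homomorphism: an $H_2$-set homomorphism $\phi\colon Y^n\to Y'$ gives the minor $\minoraction{\phi}{f_1}\in\clone_{Y'}$. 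It is $H_2$-invariant by functoriality, since $\phi\circ h=h\circ\phi$ gives $\minoraction{h}{(\minoraction{\phi}{f_1})}=\minoraction{\phi}{(\minoraction{h}{f_1})}$.
\item Quotient: passing to the quotient group $H'$ does not change the set of permutations of $Y'$, so invariance is unaffected.
\end{itemize}
Finally, the external product case, which arises when sets of conditions occur, follows by pairing witnesses. The main obstacle is the bookkeeping in the star power step: verifying, with the left/right action conventions of the wreath product, that the composed term is invariant under the whole iterated semidirect product. We will do this by induction on $n$, writing $f_{n}=f\circ(f_{n-1},\dots,f_{n-1})$ and checking invariance under $H$ acting on the outer coordinate and under $H^{Y}$ acting inside the inner blocks separately.
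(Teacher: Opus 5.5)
Your proposal is correct and follows the paper's own proof. The specialisation chain (2)$\Rightarrow$(3)$\Rightarrow$(4)$\Rightarrow$(5)$\Rightarrow$(6) is the same, as is the return to (1) and the link to (7) through Theorem~\ref{TheoremClassificationEI}, and (1)$\Rightarrow$(2) is obtained, as in the paper, by rerunning Lemma~\ref{LemmaElImplImplyImply} in an arbitrary clone. You add details the paper leaves implicit: reading $\condchar$ as the epimorphic preimages of $\condcharred$ among groups with trivial Frattini subgroup, and the clone bookkeeping for the star power. One small correction there: with $f_n=f\circ(f_{n-1},\dots,f_{n-1})$, the inner blocks carry the whole $(n-1)$-fold star power group (by induction), not just $H^{Y}$.
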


\begin{proof}
    \ref{NrElementImplication1} implies \ref{NrElementImplication2} similar to Lemma~\ref{LemmaElImplImplyImply}.

    \ref{NrElementImplication2} implies \ref{NrElementImplication3} and \ref{NrElementImplication3} implies \ref{NrElementImplication4} are obvious.

    \ref{NrElementImplication4} implies \ref{NrElementImplication5} as a group action can be seen as a finite structure.

    \ref{NrElementImplication5} implies \ref{NrElementImplication6} is obvious.

    \ref{NrElementImplication6} implies \ref{NrElementImplication7} and \ref{NrElementImplication7} implies \ref{NrElementImplication1} is shown in Theorem~\ref{TheoremClassificationEI}.
\end{proof}

\subsection{General Structures}
In addition to describing dependencies of group actions, we can also link group actions with general structures. 
\begin{lemma} \label{LemmaBlockerStructure}
    Let $\structA$ be a finite structure and $G$ a finite group. Then, the following are equivalent:
    \begin{enumerate}
        \item The structure $\structA$ pp-constructs $\structure(G\act \prim(G))$.
        \item The minion $\Pol(\structA)$ does not satisfy $\condition(G\act \prim(G))$.
    \end{enumerate}
\end{lemma}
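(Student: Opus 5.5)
The direction (1)$\Rightarrow$(2) is immediate from Theorem~\ref{TheoremReflections}. If $\structA$ pp-constructs $\structure(G\act \prim(G))$, then $\Pol(\structure(G\act\prim(G)))=\Pol_G(\prim(G))$ satisfies every minor condition satisfied by $\Pol(\structA)$. By Lemma~\ref{LemmaNoSelfLoop}, $\Pol_G(\prim(G))\not\models\condition(G\act\prim(G))$, since $\prim(G)$ has no fixed point; for $G=\{1\}$ this is the empty action and we use the remark on the empty action. Hence $\Pol(\structA)\not\models\condition(G\act\prim(G))$.

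For (2)$\Rightarrow$(1) we again use Theorem~\ref{TheoremReflections}, now in the form that it suffices to find a minion homomorphism $\Pol(\structA)\to\Pol_G(\prim(G))$. Mirroring the proof of Theorem~\ref{TheoremConstructingGroupActionA}, the plan is to pp-construct from $\structA$ a $G$-set homomorphically equivalent to $\prim(G)$. Set $P=\prim(G)$ and take the $P$-ary polymorphisms $\Pol(\structA)_P$. These are pp-definable inside the power $\structA^{\structA^P}$, i.e.\ the free structure on $P$ generators. The minor action makes this set a $G$-set, and each relation $v_g$, being the graph of the minor map $f\mapsto \minoraction{g}{f}$, is pp-definable: it is a permutation of coordinates of $\structA^{\structA^P}$ followed by equality. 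So $\structA$ pp-constructs $\structure(G\act \Pol(\structA)_P)$.

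It remains to show that $G\act\Pol(\structA)_P$ is homomorphically equivalent to $G\act P$. The projections $x\mapsto\pi_x$ give a $G$-homomorphism $P\to\Pol(\structA)_P$. For the converse, fix $f$ in an orbit with stabiliser $S=\stab_G(f)$. Then $f$ witnesses $\Pol(\structA)\models\condition(S\act P)$. We want $S\act P$ to have a fixed point, i.e.\ $S$ a proper subgroup of $G$ (proper subgroups fix points of $\prim(G)$ by Lemma~\ref{LemmaMinimalFixedPointFreeHomEquiv}). If $S=G$, then $f$ is $G$-invariant, so $\Pol(\structA)\models\condition(G\act P)$, contradicting (2). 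Choosing a fixed point $h(f)$ and extending equivariantly, as in Theorem~\ref{TheoremConstructingGroupActionA}, gives the homomorphism $\Pol(\structA)_P\to P$.

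The main obstacle is the pp-construction step for a general structure rather than a group action: we must check that the free-structure construction is a legitimate pp-construction. That means verifying that the base set $\Pol(\structA)_P\subseteq\structA^{\structA^P}$ is pp-definable (as the set of tuples preserving all relations coordinatewise) and that the $v_g$ are pp-definable. Degenerate cases also need care. When $G=\{1\}$, $P=\emptyset$, and the nullary polymorphisms are pp-defined with $n=1$ via constants, so the construction is empty exactly when no nullary polymorphism exists. When $\structA$ is empty, everything is trivial. Alternatively, and perhaps more cleanly, we can apply the homomorphism criterion directly. The map $f\mapsto$ (the polymorphism of $P$ induced via any $G$-homomorphism $\Pol(\structA)_P\to P$ composed with the evaluation) is the standard free-structure argument of \cite{Wonderland}, so we may cite it for the pp-constructibility.
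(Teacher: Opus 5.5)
Your proposal is correct and follows the argument the paper intends. The paper omits the proof, saying only that it is analogous to Theorem~\ref{TheoremConstructingGroupActionA}, and you carry out exactly that analogy: you pp-construct the minor action of $G$ on $\Pol(\structA)_{\prim(G)}$ and show it is homomorphically equivalent to $\prim(G)$, using the projections in one direction and fixed points of the proper stabilisers in the other. Your direction (1)$\Rightarrow$(2) via Theorem~\ref{TheoremReflections} and Lemma~\ref{LemmaNoSelfLoop} is fine, and so is your treatment of the degenerate cases $G=\{1\}$ and $\structA=\emptyset$.
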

\begin{proof}
    The proof is similar to the proof of Theorem~\ref{TheoremConstructingGroupActionA} and we decided to omit it.
\end{proof}

This duality result extends to arbitrary group actions as written in the next two theorems.

\begin{theorem} \label{TheoremStructureConstructsGroup}
    Let $\structA$ be a finite structure and $G\act X$ some group action. Then, the following are equivalent:
    \begin{enumerate}
        \item \label{nrTheoremStructureConstructsGroup1}
        The structure $\structA$ pp-constructs $\structure(G \act X)$.
        \item \label{nrTheoremStructureConstructsGroup2}
        The structure $\structA$ pp-constructs $\prod_{G'\in \ppchar(G\act X)}(\structure(G'\act \prim(G')))$.
        \item \label{nrTheoremStructureConstructsGroup3}
        The structure $\structA$ pp-constructs $\structure(G'\act \prim(G'))$ for \emph{all} $G'\in \ppchar(G\act X)$.
        \item \label{nrTheoremStructureConstructsGroup4}
        The minion $\Pol(\structA)$ satisfies $\structure(G'\act \prim(G'))$ for \emph{no} $G'\in \ppchar(G\act X)$.
    \end{enumerate}
\end{theorem}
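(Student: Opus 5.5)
The plan is to prove the cycle (1)$\Rightarrow$(2)$\Rightarrow$(3)$\Leftrightarrow$(4) and then (3)$\Rightarrow$(1). Condition (4) is read as "$\Pol(\structA)$ satisfies $\condition(G'\act\prim(G'))$ for no $G'\in\ppchar(G\act X)$", and throughout we use Theorem~\ref{TheoremReflections} to move between pp-constructions, minion homomorphisms and minor conditions.

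For (1)$\Rightarrow$(2), we use that the ppg-interconstructibility of $G\act X$ with $\prod_{G'\in\ppchar(G\act X)}(G'\act\prim(G'))$ is in particular a pp-interconstructibility, by Theorem~\ref{TheoremPPConstructionGroups}. Since $\structure$ of an external product action is the external product of the structures $\structure(G'\act\prim(G'))$ (the relations being full preimages of projections), transitivity of pp-constructions gives (2). The same argument run backwards gives (2)$\Rightarrow$(1).

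(2)$\Rightarrow$(3) holds because each factor of an external product is pp-constructible from the product (each relation $v_{g'}$ is pp-definable by projecting). Conversely, (3)$\Rightarrow$(2) is the stated fact that a structure pp-constructs an external product if and only if it pp-constructs every factor. The equivalence (3)$\Leftrightarrow$(4) is Lemma~\ref{LemmaBlockerStructure} applied to each $G'\in\ppchar(G\act X)$ separately.

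Thus the real content is Lemma~\ref{LemmaBlockerStructure}, whose proof the paper omits. To cover it, we would mimic Theorem~\ref{TheoremConstructingGroupActionA}. If $\Pol(\structA)\not\models\condition(G\act\prim(G))$, we pp-construct the $G$-structure on $\Pol^{\prim(G)}(\structA)$: this is pp-definable in $\structA^{\structA^{\prim(G)}}$, and the relation $v_g$ is given by the minor $f\mapsto\minoraction{g}{f}$. This structure is a $G$-set with no fixed point, and the projections give a $G$-homomorphism $\prim(G)\to\Pol^{\prim(G)}(\structA)$. For the converse map, each stabilizer $\stab_G(f)$ is a proper subgroup (there is no invariant $f$), so it fixes a point of $\prim(G)$, because $G\act\prim(G)$ is minimal fixed-point free by Lemma~\ref{LemmaMinimalFixedPointFreeHomEquiv}. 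Choosing such fixed points orbitwise gives the homomorphism back, exactly as in that theorem. The other direction of the lemma is Lemma~\ref{LemmaNoSelfLoop} together with the preservation of minor conditions under pp-constructions.

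The main obstacle is bookkeeping. We must check that the minor action really yields a pp-construction in the sense of Section~\ref{SectionPreliminariesUniversalAlgebra}, including the degenerate cases $G=\{1\}$ (empty $\prim$, nullary polymorphisms) and $\structA$ empty. We must also identify the structure obtained by pp-construction with $\structure(G\act Y)$ up to homomorphic equivalence.
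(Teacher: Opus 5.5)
Your decomposition is the paper's own. You get (1)$\Leftrightarrow$(2) from the pp-interconstructibility supplied by Theorem~\ref{TheoremPPConstructionGroups}, (2)$\Leftrightarrow$(3) from the product fact, and (3)$\Leftrightarrow$(4) from Lemma~\ref{LemmaBlockerStructure}. Your sketch of that omitted lemma is sound: pp-define $\Pol^{\prim(G)}(\structA)$ with the minor relations, use projections in one direction and fixed points of the proper stabilisers in the other. The steps that fail are those involving the product, and the cause is the trivial group. Whenever $G\act X$ has no fixed point, which is the only case with content, any minimal fixed-point-free $H\le G$ gives $H/H=\{1\}\in\ppchar(G\act X)$; Figure~\ref{figure_epiPoset} indeed lists $\IZ/1$ in $\ppchar(\IZ/6\act\IZ/6)$. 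Since $\prim(\{1\})=\emptyset$, the external product in (2) is the empty structure, and the product action $\prod_{G'}G'\act\prod_{G'}\prim(G')$ acts on $\emptyset$. Recovering a factor from an external product needs a section $b\mapsto(a_0,b)$, i.e.\ nonempty co-factors. Pp-constructing an empty structure only says that $\structA$ has no nullary polymorphism. So your steps (2)$\Rightarrow$(3) and (2)$\Rightarrow$(1) break down. The ppg-interconstructibility of $G\act X$ with the product action that you invoke is also false here, since an action on $\emptyset$ ppg-constructs no nonempty fixed-point-free action.

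Concretely, let $\structA=\structure(\IZ/3\IZ\act\IZ/3\IZ)$ and $G\act X=\IZ/2\IZ\act\IZ/2\IZ$, so $\ppchar(G\act X)=\{\{1\},\IZ/2\IZ\}$. The formula $v_1(x,x)$ defines the empty set in $\structA$, so $\structA$ pp-constructs the (empty) product and (2) holds. But $(x,y)\mapsto 2x+2y$ is a symmetric polymorphism of $\structA$ (it commutes with translations because $4\equiv 1\pmod{3}$), so $\Pol(\structA)\models\condition(\IZ/2\IZ\act\IZ/2\IZ)$, and (1), (3) and (4) all fail. The paper's proof rests on the same two facts, so you inherited this defect rather than introduced it, but the argument as written does not prove the statement. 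The repair is to let the product in (2) range over the nontrivial members of $\ppchar(G\act X)$ when $X\neq\emptyset$. The factor $\structure(\{1\}\act\emptyset)$ is redundant there, since pp-constructing any $\structure(G'\act\prim(G'))$ with $G'\neq\{1\}$ already rules out nullary polymorphisms. With all factors nonempty, your projection argument and the ppg-interconstructibility go through. Relatedly, $\structure$ of the product action is not literally the external product of the structures, since its relations $v_{(g_i)}$ are intersections of the full preimages. With nonempty factors the two are pp-interconstructible, so this is harmless.
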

\begin{proof}
    \ref{nrTheoremStructureConstructsGroup1}$ \iff $\ref{nrTheoremStructureConstructsGroup2}:
    as the structures $\structA$ and $\prod_{G'\in \ppchar(G\act X)}(\structure(G'\act \prim(G')))$ are pp-interconstructable by Theorem~\ref{TheoremPPConstructionGroups}.

    \ref{nrTheoremStructureConstructsGroup2}$ \iff $\ref{nrTheoremStructureConstructsGroup3} as a structure $\structA$ pp-constructs a product if and only if it pp-constructs every factor.

    \ref{nrTheoremStructureConstructsGroup3}$ \iff $\ref{nrTheoremStructureConstructsGroup4} by Lemma~\ref{LemmaBlockerStructure}.
\end{proof}

\begin{theorem} \label{TheoremStructureModelsGroup}
    Let $\structA$ be a finite structure and $H\act Y$ some group action. Then, the following are equivalent:
    \begin{enumerate}
        \item \label{nrTheoremStructureModelsGroup1}
        The minion $\Pol(\structA)$ satisfies $\condition(H\act Y)$.
        \item \label{nrTheoremStructureModelsGroup2}
        The structure $\structA$ pp-constructs $\structure(H'\act \prim(H'))$ for \emph{no} $H'\in \condcharred(H\act Y)$.
        \item \label{nrTheoremStructureModelsGroup3}
        The minion $\Pol(\structA)$ satisfies $\structure(H'\act \prim(H'))$ for \emph{all} $H'\in \condcharred(H\act Y)$.
    \end{enumerate}
\end{theorem}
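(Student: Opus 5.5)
We prove the cycle \ref{nrTheoremStructureModelsGroup1}$\Rightarrow$\ref{nrTheoremStructureModelsGroup3}$\Rightarrow$\ref{nrTheoremStructureModelsGroup1}, and then show \ref{nrTheoremStructureModelsGroup2}$\Leftrightarrow$\ref{nrTheoremStructureModelsGroup3}. Throughout, the phrase ``$\Pol(\structA)$ satisfies $\structure(H'\act\prim(H'))$'' is read as satisfying the loop condition $\condition(H'\act \prim(H'))$.

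The equivalence \ref{nrTheoremStructureModelsGroup2}$\Leftrightarrow$\ref{nrTheoremStructureModelsGroup3} is immediate. For each single $H'$, Lemma~\ref{LemmaBlockerStructure} says that $\structA$ pp-constructs $\structure(H'\act\prim(H'))$ if and only if $\Pol(\structA)\not\models\condition(H'\act\prim(H'))$. Negating and quantifying over all $H'\in\condcharred(H\act Y)$ gives exactly the equivalence.

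For \ref{nrTheoremStructureModelsGroup1}$\Leftrightarrow$\ref{nrTheoremStructureModelsGroup3} we use Lemma~\ref{LemmaCondcharredUnfold}. It states that $\condition(H\act Y)$ is elementary equivalent to the finite set $\{\condition(H'\act\prim(H'))\mid H'\in\condcharred(H\act Y)\}$. By definition, elementary implication by a set means elementary implication by the external product action $\prod_{H'} H'\act\prod_{H'}\prim(H')$.

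\textbf{Direction \ref{nrTheoremStructureModelsGroup1}$\Rightarrow$\ref{nrTheoremStructureModelsGroup3}.} For each $H'\in\condcharred(H\act Y)$, the equivalence yields $\condition(H\act Y)\ecle\condition(H'\act\prim(H'))$. Theorem~\ref{TheoremElementImplication} (implication \ref{NrElementImplication1}$\Rightarrow$\ref{NrElementImplication4}) turns this into a genuine implication in the polymorphism minion of every finite structure. Applied to $\structA$, it gives \ref{nrTheoremStructureModelsGroup3}.

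\textbf{Direction \ref{nrTheoremStructureModelsGroup3}$\Rightarrow$\ref{nrTheoremStructureModelsGroup1}.} Here we need that the product condition is satisfied whenever each factor condition is. Suppose $f_{H'}\in\Pol(\structA)_{\prim(H')}$ is invariant under $H'$ for every $H'$. We form the $\prod_{H'}\prim(H')$-ary operation
$$(x_{\bar p})_{\bar p}\mapsto f_{H'_1}\bigl(f_{H'_2}(\cdots)\cdots\bigr),$$
the nested composition in which the innermost functions act on the coordinates of the corresponding factor. This is a polymorphism, being a composition in the clone $\Pol(\structA)$. Because each $f_{H'}$ is invariant under its own factor group and the factor groups act on distinct coordinates of the product tuple, the composite is invariant under the product group.

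This is the same composition argument as in the star-power step of Lemma~\ref{LemmaElImplImplyImply}, and it is where clone structure rather than mere minion structure is used. Having obtained $\Pol(\structA)\models\condition(\prod H'\act\prod\prim(H'))$, the elementary implication from the product back to $\condition(H\act Y)$ together with Theorem~\ref{TheoremElementImplication} (\ref{NrElementImplication1}$\Rightarrow$\ref{NrElementImplication4}) gives \ref{nrTheoremStructureModelsGroup1}.

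The main obstacle is this product step. One must check that the nested composite is well-defined as a map indexed by the product set and invariant under the product action. The check is routine but needs careful indexing. A degenerate case also arises when $\condcharred(H\act Y)$ is empty, i.e.\ when $H\act Y$ has a fixed point. Then the empty product is the trivial action on a point, whose condition is satisfied by any unary projection, so the statement still holds: both sides are trivially true. If $\structA$ is empty, nullary issues arise, but these are covered by the conventions for the empty action.
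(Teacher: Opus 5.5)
Your proof is correct and follows essentially the same route as the paper. You get (2)$\Leftrightarrow$(3) from Lemma~\ref{LemmaBlockerStructure}, and you link (1) to (3) through the elementary equivalence of $\condition(H\act Y)$ with $\{\condition(H'\act\prim(H'))\mid H'\in\condcharred(H\act Y)\}$ (Lemma~\ref{LemmaCondcharredUnfold}) together with Theorem~\ref{TheoremElementImplication}; your clone-composition check that the factor conditions give the product condition makes explicit a step the paper's two-line proof leaves implicit, as it also glosses the (1)--(3) link.
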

\begin{proof}
    \ref{nrTheoremStructureModelsGroup1}$\iff$\ref{nrTheoremStructureModelsGroup2} as these two sets of conditions are elementary equivalent by \ref{TheoremPPConstructionGroups} and this suffices by Theorem~\ref{TheoremElementImplication}.

    \ref{nrTheoremStructureModelsGroup2}$\iff$\ref{nrTheoremStructureModelsGroup3} by Lemma~\ref{LemmaBlockerStructure}.
\end{proof}

\subsection{Applications}
This paper brings a new insight, why finite simple groups showed up in the primitive positive constructability poset: In fact, the poset of all groups with trivial Frattini subgroup ordered by group epimorphisms is a subposet of the poset of all finite structures ordered by pp-constructions. As the simple groups are on top of this list, they appeared in \cite{meyerStarke2024finitesimplegroups}. 

In addition, we are now able to give a different proof of a variation of \cite[Theorem 4.10]{meyerStarke2024finitesimplegroups} by using the classification of all group actions up to elementary implications to improve \cite[Theorem 2]{BLP}:
\begin{theorem} \label{TheoremBLPBySimpleGroups}
    For a structure $\structA$, the following is equivalent:
    \begin{enumerate}
        \item \label{nrTheoremBLPBySimpleGroups2}
        The structure $\structA$ satisfies $\condition(S_n\act [n])$ for all $n$.
        \item \label{nrTheoremBLPBySimpleGroups3}
        The structure $\structA$ satisfies $\condition(G\act X)$ for all fixed-point free group actions $G\act X$.
        \item \label{nrTheoremBLPBySimpleGroups4}
        The structure $\structA$ satisfies $\condition(G\act \prim(G))$ for all finite simple groups $G$.
        \item \label{nrTheoremBLPBySimpleGroups5}
        The structure $\structA$ cannot pp-construct $\structure(G\act X)$ for any fixed-point free group actions $G\act X$.
    \end{enumerate}
    Moreover, it it known from the literature, that the previous conditions are equivalent to
    \begin{enumerate}[resume]
        \item \label{nrTheoremBLPBySimpleGroups1a}
        The constraint satisfaction problem CSP($\structA$) can be solved by basic linear programming robustly in polynomial time.
        \item The structure $\structA$ has width 1.
        \item The structure $\structA$ has a set operation.
        \item The structure $\structA$ has a measure operation.
        \item \label{nrTheoremBLPBySimpleGroups1e}
        The structure $\structA$ has tree duality.
    \end{enumerate}
\end{theorem}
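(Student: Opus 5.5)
The plan is to prove the equivalence of \ref{nrTheoremBLPBySimpleGroups2}--\ref{nrTheoremBLPBySimpleGroups5} with the classification results of Section~\ref{SectionClassificationsGeneral} and Section~\ref{SectionApplyUniversalAlgebra}. The items \ref{nrTheoremBLPBySimpleGroups1a}--\ref{nrTheoremBLPBySimpleGroups1e} are then equivalent to \ref{nrTheoremBLPBySimpleGroups2} by \cite[Theorem 2]{BLP} and the cited literature. The key observation is that for a fixed-point free action $H\act Y$, the set $\condcharred(H\act Y)$ is nonempty. Moreover, every group in $\condchar(H\act Y)$, being a nontrivial group with trivial Frattini subgroup, admits an epimorphism onto a finite simple group, namely a quotient by a maximal normal subgroup. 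Simple groups have trivial Frattini subgroup, so they are legitimate elements of the epimorphism poset.

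The chain of implications will be as follows.

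\ref{nrTheoremBLPBySimpleGroups2}$\Rightarrow$\ref{nrTheoremBLPBySimpleGroups4}: take a simple group $G$ and a fixed-point free action $G\act\prim(G)$ whose components have sizes $n_1,\dots,n_k$. By Lemma~\ref{LemmaSymCondition}, $\Pol_G(\prim(G))\not\models\condition(\Sym(n)\act[n])$ for $n=n_1$. By Theorem~\ref{TheoremModelsCharakterisierung}, $\ppchar(G\act\prim(G))=\{G,\{1\}\}$, modulo conventions about the trivial group, meets $\condcharred(\Sym(n)\act[n])$. Since the trivial group is excluded when the action is fixed-point free, we get $G\in\condchar(\Sym(n)\act[n])$. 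By Theorem~\ref{TheoremElementImplication}, $\condition(\Sym(n)\act[n])\ecle\condition(G\act\prim(G))$ then holds on all structures, which gives \ref{nrTheoremBLPBySimpleGroups4}.

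\ref{nrTheoremBLPBySimpleGroups4}$\Rightarrow$\ref{nrTheoremBLPBySimpleGroups3}: for a fixed-point free $H\act Y$, Theorem~\ref{TheoremStructureModelsGroup} says that $\Pol(\structA)\models\condition(H\act Y)$ if and only if $\condition(H'\act\prim(H'))$ holds for all $H'\in\condcharred(H\act Y)$. Each such $H'$ surjects onto a simple group $T$. By Theorem~\ref{TheoremFrattiniActionPoset}, implication \ref{NrFrattiniActionPoset1} to \ref{NrFrattiniActionPoset4}, $\condition(T\act\prim(T))\ecle\condition(H'\act\prim(H'))$, and this transfers to $\structA$ by Theorem~\ref{TheoremElementImplication}.

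\ref{nrTheoremBLPBySimpleGroups3}$\Rightarrow$\ref{nrTheoremBLPBySimpleGroups2} is immediate for $n\ge2$, since $\Sym(n)\act[n]$ is fixed-point free; for $n=1$ the condition is trivial.

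\ref{nrTheoremBLPBySimpleGroups3}$\Leftrightarrow$\ref{nrTheoremBLPBySimpleGroups5}: if $\structA$ pp-constructs a fixed-point free $\structure(G\act X)$, then $\Pol(\structA)$ fails $\condition(G\act X)$ by Lemma~\ref{LemmaNoSelfLoop} and Theorem~\ref{TheoremReflections}. Conversely, if $\Pol(\structA)\not\models\condition(H\act Y)$ for a fixed-point free $H\act Y$, then Theorem~\ref{TheoremStructureModelsGroup} yields an $H'\in\condcharred(H\act Y)$ with $\structA$ pp-constructing $\structure(H'\act\prim(H'))$. This action is fixed-point free since $H'\neq\{1\}$.

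The main obstacle I expect is handling the degenerate cases: the empty action, the trivial group in $\ppchar$ and $\condcharred$, and the case $n=1$ in \ref{nrTheoremBLPBySimpleGroups2}. I would deal with these first by noting that a fixed-point free action on a nonempty set forces every relevant subquotient to be nontrivial. The empty action $G\act\emptyset$ counts as fixed-point free, and its condition is the fixed-point condition $\condition(\{1\}\act\emptyset)$. That condition is implied by $\condition(\Sym(n)\act[n])$ only in a degenerate way, so it has to be checked directly in each item or excluded by convention; I would try first to exclude it by reading ``fixed-point free'' as including nonempty.
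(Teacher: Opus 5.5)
Your proof is correct. It has the same skeleton as the paper's: the cycle (1)$\Rightarrow$(3)$\Rightarrow$(2)$\Rightarrow$(1), with (5)--(9) imported from \cite[Theorem 2]{BLP}. Three links are made differently.

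\emph{(1)$\Rightarrow$(3).} The paper needs one line: $G\act\prim(G)$ is a subgroup action of $\Sym(N)\act[N]$ with $N=|\prim(G)|$. Hence $\condition(\Sym(N)\act[N])\ecle\condition(G\act\prim(G))$ follows straight from the definition of elementary implication. Your detour through Lemma~\ref{LemmaSymCondition}, Theorem~\ref{TheoremModelsCharakterisierung} and $\ppchar(G\act\prim(G))=\{G,\{1\}\}$ is valid. The step from $G\in\condcharred(\Sym(n)\act[n])$ to the elementary implication is really Lemma~\ref{LemmaCondCharCharacterisation}. But the detour spends the whole classification on something definitional. Even your sharper arity $n=n_1$ comes for free: the action on a single orbit $G/M$ is a subgroup action of $\Sym(n_1)\act[n_1]$, and the inclusion $G/M\hookrightarrow\prim(G)$ supplies the homomorphism step.

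\emph{(3)$\Rightarrow$(2).} Your chain is: Theorem~\ref{TheoremStructureModelsGroup}; every $H'\in\condcharred(H\act Y)$ is nontrivial and surjects onto a simple $T$; then Theorem~\ref{TheoremFrattiniActionPoset} gives $\condition(T\act\prim(T))\ecle\condition(H'\act\prim(H'))$. This spells out what the paper's bare reference to Theorem~\ref{TheoremStructureConstructsGroup} leaves implicit.

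\emph{Item (4).} The paper ties (4) to (3) via Lemma~\ref{LemmaBlockerStructure}. It tacitly also uses that every nonempty fixed-point free action pp-constructs some $T\act\prim(T)$ with $T$ simple. You tie (4) to (2) instead. Your direction (2)$\Rightarrow$(4) needs only Lemma~\ref{LemmaNoSelfLoop} and Theorem~\ref{TheoremReflections}. The converse still rests on Lemma~\ref{LemmaBlockerStructure}, via Theorem~\ref{TheoremStructureModelsGroup}.

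\emph{The empty action.} Reading ``fixed-point free'' as ``fixed-point free on a nonempty set'', and taking $n\ge 1$ in (1), is forced rather than merely convenient. If the empty action were admitted, take the empty structure or $(\{0,1\};\{0\},\{1\})$. Either satisfies (3), yet fails $\condition(\{1\}\act\emptyset)$ in (2) and pp-constructs $\structure(\{1\}\act\emptyset)$, violating (4). The paper's proof relies on this convention without stating it.
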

\begin{proof}
    \ref{nrTheoremBLPBySimpleGroups2} implies \ref{nrTheoremBLPBySimpleGroups4} as $G\act \prim(G)$ is a subgroup action of $S_{|\prim(G)|}\act [|\prim(G))|]$ and thus the condition $\condition(G\act \prim(G))$ is implied by the condition $\condition( S_{|\prim(G)|}\act [|\prim(G))|] )$.

    \ref{nrTheoremBLPBySimpleGroups4} implies \ref{nrTheoremBLPBySimpleGroups3} by Theorem~\ref{TheoremStructureConstructsGroup}.

    \ref{nrTheoremBLPBySimpleGroups3} implies \ref{nrTheoremBLPBySimpleGroups2} as $S_n\act[n]$ has no fixed-point.

    \ref{nrTheoremBLPBySimpleGroups4} is equivalent to \ref{nrTheoremBLPBySimpleGroups5} by Lemma~\ref{LemmaBlockerStructure}.

    Finally, the equivalence of \ref{nrTheoremBLPBySimpleGroups2} and all of \ref{nrTheoremBLPBySimpleGroups1a} to \ref{nrTheoremBLPBySimpleGroups1e} is shown in \cite[Theorem 2]{BLP} using results from \cite{FV1998} and \cite{DalmauPearson}.
\end{proof}

In addition to these equivalent conditions, loop conditions of permutation groups are an easy and common way to show that a structure cannot primitively positively construct another. This paper discusses which of the conditions imply each other which can lead to a more efficient check of these conditions. See Section~\ref{SectionExamplesConditions} for a discussion of common conditions and their equivalent characterizations.

\subsection{Open problems}
With a view toward classifying all structures up to primitive positive constructions, there are two canonical questions concerning permutation groups:

\begin{question}
    Given any finite structure $\structA$, decide wether it is primitively positively interconstrucable with a permutation group.
\end{question}
It is easy to see that such a structure needs to admit a Pixley operation, named after \cite{Pix63}, that is a polymorphism $p$ satisfying 
$$p(x,y,x)=p(y,y,x)=p(x,y,y)=p(x,x,x)$$
as every permutation group has such an operation. However this condition is not enough as the structure 
$$
    (\{0,1,-1\};\{1\},\{(x,y)\mid |x|=|y|\},\{(x,y)\mid x+y=0\} ),
$$
called $\overline{\mathcal{M}_1}$ in \cite{FKRV25}, has a Pixley operation but cannot be pp-constructed from a permutation group.

We have a conjectured answer to this question:
\begin{conjecture}
    Given any structure $\structA$. The following are equivalent:
    \begin{enumerate}
        \item The structure $\structA$ has a Maltsev operation i.e. 
        $$\exists f \in \Pol(\structA):\forall x,y: f(x,y,y)=f(x,x,x)=f(y,y,x)$$
        and it has a guarded fully symmetric operation of all arities i.e. for all $n$, we have
        \begin{align*}
        \exists f \in \Pol(\structA)&:
        \\
        \forall x,y_1,\dots,y_n, \forall \sigma\in S_n
        &:
        f(x;y_1,\dots,y_n)=f(x;y_{\sigma(1)},\dots,y_{\sigma(n)}),
        \\
        \forall x_1,x_2,y
        &:
        f(x_1;y,\dots,y)=f(x_2;y,\dots,y)
        \end{align*}
        \item The structure $\structA$ is pp-interconstructable with a permutation group or $P_1$, the structure $(\{0,1\};<)$.
    \end{enumerate}
\end{conjecture}

The other question takes any structure $\structA$ and looks at the generalization of the set $\ppchar(\structA)$. Given any structure $\structA$, define $\ppchar(\structA)$ to be the set of all finite groups $G$ with trivial Frattini subgroup such that $\structA$ pp-constructs $\structure(G\act \prim(G))$.
\begin{question}
     For which sets $S$ of groups is there a finite structure $\structA$ such that $\ppchar(\structA)=S$?
\end{question}
For every finite set, $\structA$ exists and can be chosen to be a permutation group by this results. For infinite sets, it is known by the cyclic terms theorem \cite[Theorem 4.2]{Cyclic} that $S$ has to contain all cyclic groups (in which case $\structA$ is not Taylor) or it contains only finitely many of them. We believe to be able to generalize the cyclic terms theorem to get the following classification:
\begin{conjecture}
    Let $S$ be a set of groups (considered up to isomorphism) with trivial Frattini subgroup. Then, the following are equivalent:
    \begin{enumerate}
        \item There exists a finite structure $\structA$ such that $\ppchar(\structA)=S$.
        \item The set $S$ is closed under epimorphisms. Moreover,
        \begin{itemize}
            \item $S$ contains all groups (with trivial Frattini subgroup) or
            \item there is a finite set of primes $P$ such that $S$ contains all $p$-groups for all $p\in P$ and only finitely many other elements.
        \end{itemize}
    \end{enumerate}
\end{conjecture}
This proof will be presented in an upcoming paper.

\appendix

\section{Examples: Conditions arising from common group actions.}
\label{SectionExamplesConditions}
We take a closer look at some examples arising from common group actions. These are in particular $S_n \act [n]$, the condition of admitting a fully symmetric polymorphism and $G\act G$, the condition related to the regular action.

\begin{example}[The symmetric group] \label{ExampleFS}
The symmetric group on $n$ elements is a common condition. The characteristic set $\condcharred(\Sym(n)\act [n])$ is given for $n=$ 
    \begin{enumerate}[label=\arabic*]
        \item by an empty set. So $\condition(\Sym(1)\act [1]) \iff \top$
        \item by $\{\IZ/2\IZ\}$. So $\condition(\Sym(2)\act [2]) \iff \condition(\IZ/2\IZ\act \IZ/2\IZ)$. This is in fact the same action.
        \item by $\{\IZ/3\IZ, \Sym(3)\}$. 
		\item by $\{\IZ/2\IZ, A_4\}$, as those two groups have actions on $[4]$ without fixed point and every other subgroup of $\Sym(4)$ that is not contained in a conjugate of $\Sym(3)$ has an epimorphism to $\IZ/2\IZ$.
        \item by $\{\Sym(5),A_5, F, D_{5}, \Sym(3), \IZ/6\IZ, \IZ/5\IZ\}$ as $\Sym(5)$ has up to conjugation 8 subgroups which act on $[5]$ without fixed point (or, equivalently, are no subgroup of a conjugate of $A_4$) of which only $D_{6}$ has an epimorphism to $S_3$ and is therefore not to mention here. 
        
        A simpler, but strictly stronger set would be $\{A_5, \IZ/2\IZ, \IZ/3\IZ, \IZ/5\IZ\}$ as every of the above groups has an epimorphism into one of these groups, so
        \begin{align*}
            \condition(\IZ/2\IZ\act \IZ/2\IZ)&\land \condition(\IZ/3\IZ\act \IZ/3\IZ) \\
            {} \land \condition(\IZ/5\IZ\act \IZ/5\IZ) &\land \condition(A_5 \act \prim(A_5))
            \implies \condition(\Sym(5)\act [5])
        \end{align*}
        but this is not an equivalence. This is the first $n$ where a group appears that has no epimorphism into a cyclic group.
    \end{enumerate}
    In general, we get that $\{\condition(G\act \prim(G))\mid G \text{ subquotient of }S_n\}$ elementary implies $\condition(S_n\act [n])$ as every nontrivial subquotient of $S_n$ has an epimorphism into a simple subquotient of $S_n$
\end{example}

\begin{example}[The regular representation]
    For any group $H$, the condition $\condition(H\act H)$ induced by the group acting on itself by left multiplication, is elementary equivalent to 
    $$
        \bigwedge_{\{1\} \lneq S \le H} \condition(S \act \prim(S)),
    $$
    as every subgroup of $H$ except for the trivial group has no fixed point in this action. Note that some of the conditions might be superfluous. To omit them, we consider solvable groups separately.
    \begin{itemize}
        \item 
        For a solvable group $H$ with $|H|=n$, we get that $\condition(H\act H)$ is elementary equivalent to 
        $$
            \bigwedge_{p|n \text{ prime}} \condition(\IZ/p\IZ \act \IZ/p\IZ),
        $$
        as every nontrivial subgroup of $G$ has no fixed point. Moreover, by Sylow's Theorem, all of those goups are actually isomorphic to a subgroup of $H$ and these are the only cyclic subgroups up to isomorphism. As $H$ is solvable, every subgroup of $H$ admits an epimorphism into a cyclic subgroup of $H$. Moreover,
        $$
            \condcharred(H\act H)=\{\IZ/p\IZ \mid p|n \text{ prime}\}
        $$
        as none of the conditions is superfluous in this case.
        \item 
        For a non-solvable group, the condition $\condition(H\act H)$ can never be described by the cyclic subgroups, as there is always a subgroup $S\le H$ such that $S$ has no epimorphism into an abelian simple group. The group $S$ can be constructed as the smallest subgroup of $H$ that is contained in a composition series of $H$ and admits a non-cyclic simple group as composition factor.\qedhere
    \end{itemize}
\end{example}

%% Weitere Resultate für potentielle nachfolgende Veröffentlichungen
%\clearpage
%\input{06_a_Universal-Preliminaries}
%\input{06_b_UniversalStructures}
%\input{07_P-groups}
%\input{08_CyclicTerms}
%\input{09_GroupTheory}
%\input{10_The-trivial-group}
%\input{99_todos}

\bibliography{citations} 
\bibliographystyle{alpha}

\end{document}